\newcommand{\gskip}{\vspace{12pt}}
\newcommand{\RR}{{\mathbb R}}
\newtheorem{thm}{Theorem}[section]
\newtheorem{cor}[thm]{Corollary}
\newtheorem{lem}[thm]{Lemma}
\newtheorem{prop}[thm]{Proposition}
\theoremstyle{definition}
\theoremstyle{remark}
\numberwithin{equation}{subsection}
\newcounter{si}
\newcommand{\Rmnum}[1]{\uppercase\expandafter{\romannumeral #1}}  
\newcommand{\GG}{\Gamma}
\newcommand{\SSII}{(\thesi) \addtocounter{si}{1}}
\begin{document}
\title{Non-geometric distance-regular graphs of diameter at least $3$ with smallest eigenvalue at least $-3$}

\author{
Jack H. Koolen$^{\,\rm 1, \, \rm 2}$,
Kefan Yu$^{\,\rm 3}$,
Xiaoye Liang\thanks{Corresponding author.} $^{\,\rm 4,\, \rm 5}$, \\
Harrison Choi$^{\,\rm 6}$, and
Greg Markowsky$^{\,\rm 6}$\\

 {\small {\tt koolen@ustc.edu.cn}~~
         {\tt ykf97@mail.ustc.edu.cn}~~
         {\tt liangxy0105@foxmail.com}\\
         {\tt hcho0043@student.monash.edu}~~
         {\tt gmarkowsky@gmail.com} }\\
         
  {\footnotesize{$^{\rm 1}$School of Mathematical Sciences, University of Science and Technology of China, Hefei, PR China}}\\
  {\footnotesize{$^{\rm 2}$Wen-Tsun Wu Key Laboratory of the CAS, University of Science and Technology of China,  Hefei, PR China}}\\
  {\footnotesize{$^{\rm 3}$ School of Computer Science and Technology, University of Science and Technology of China, Hefei, PR China}}\\
  {\footnotesize{$^{\rm 4}$School of Mathematics and Physics,  Anhui Jianzhu University, Hefei, PR China}}\\
   {\footnotesize{$^{\rm 5}$Operations Research and Data Science Laboratory, Anhui Jianzhu University, Hefei, PR China}}\\
   {\footnotesize{$^{\rm 6}$Department of Mathematics,  Monash University, Australia}}\\
 }
 \date{}
\maketitle

\begin{abstract}
	In this paper, we classify non-geometric distance-regular graphs of diameter at least $3$ with smallest eigenvalue at least $-3$. This is progress towards what is hoped to be an eventual complete classification of distance-regular graphs with smallest eigenvalue at least $-3$, analogous to existing classification results available in the case that the smallest eigenvalue is at least $-2$.
\end{abstract}

\section{Introduction} \label{intro}

A great deal of information is known about regular graphs with smallest eigenvalue at least $ -2 $. For instance, it was proved in \cite{CameronEtAl} that if a regular graph $\Gamma$ with more than $ 28 $ vertices
is connected and has smallest eigenvalue at least $-2$, then it is either a line graph or a cocktail party graph (see also \cite[Theorem 3.12.2]{BCN}). Furthermore, the strongly regular graphs with smallest eigenvalue (at least) $ -2 $ have been completely classified, and form a short list (cf. \cite[Theorem 3.12.4]{BCN}). It is natural to attempt to extend these results and to similarly understand regular graphs with smallest eigenvalue at least $ -3 $. As one would expect, this is much harder than the $ -2 $ case, but in recent years a number of researchers have made significant contributions to this project. The purpose of this paper is to expand knowledge in this area, and in particular to understand better non-geometric distance-regular graphs with smallest eigenvalue at least $-3$.

The geometric distance-regular graphs with $c_2 \geq  2$ and smallest eigenvalue $-3$ have been completely classified in \cite{gdrg-3}, \cite{non-exist}, and \cite{45}. Classifying all geometric distance-regular graphs with $c_2 = 1$ is still an open problem, and it is probably very hard. Our focus then is on the non-geometric case.

We will consider when the  diameter is at least $3$ and the smallest eigenvalue  is at least $-3$ in this paper. Our main result is the following theorem.

\begin{thm}\label{main}
 Let $\Gamma$ be a non-geometric distance-regular graph with diameter at least 3. 
 If  the smallest eigenvalue of $\Gamma $ is at least $-3$,  then one of the following cases holds:
\begin{enumerate}[(a)]
  \item $\Gamma$ is the Odd graph $O_4$ with $\iota(\Gamma)=\{4,3,3;1,1,2\}$;
  \item $\Gamma$ is the Sylvester graph with $\iota(\Gamma)=\{5,4,2;1,1,4\}$;
  \item $\Gamma$ is the second subconstituent of Hoffman-Singleton graph with $\iota(\Gamma)=\{6,5,1;1,1,6\}$;
  \item $\Gamma$ is the Perkel graph with $\iota(\Gamma)=\{6,5,2;1,1,3\}$;
  \item $\Gamma$ is the symplectic $7$-cover of $K_9$ with $\iota(\Gamma)=\{8,6,1;1,1,8\}$;

  \item $\Gamma$ is the Coxeter graph $\iota(\Gamma)=\{3,2,2,1;1,1,1,2\}$;
  \item $\Gamma$ is the dodecahedron with $\iota(\Gamma)=\{3,2,1,1,1;1,1,1,2,3\}$;
  \item $\Gamma$ is the Biggs-Smith graph with $\iota(\Gamma)=\{3,2,2,2,1,1,1;1,1,1,1,1,1,3\}$;
 \item $\Gamma$ is the Wells graph with $\iota(\Gamma)=\{5,4,1,1;1,1,4,5\}$;

  \item $\Gamma$ is the icosahedron with $\iota(\Gamma)=\{5,2,1;1,2,5\}$;		
  \item $\Gamma$ is the Doro graph with $\iota(\Gamma)=\{10,6,4;1,2,5\}$;

  \item $\Gamma$ is the halved $6$-cube with $\iota(\Gamma)=\{15,6,1;1,6,15\}$;
  \item $\Gamma$ is the Gosset graph with $\iota(\Gamma)=\{27,10,1;1,10,27\}$;

  \item $\Gamma$ is the halved $7$-cube with $\iota(\Gamma)=\{21,10,3;1,6,15\}$;

  \item $\Gamma$ is the Klein graph with $\iota(\Gamma)=\{7,4,1;1,2,7\}$;
  \item $\Gamma$ is a distance-regular graph with $\iota(\Gamma)=\{9,6,1;1,2,9\}$;
  \item $\Gamma$ is the Doob graph with $\iota(\Gamma)=\{9,6,3;1,2,3\}$;
  \item $\Gamma$ is a  distance-regular graph with $\iota(\Gamma)=\{15,10,1;1,2,15\}$;
   \item $\Gamma$ is a putative distance-regular graph with $\iota(\Gamma)=\{18,12,1;1,2,18\}$.
 \end{enumerate}
\end{thm}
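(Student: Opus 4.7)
The proof reduces the problem to a finite search and then classifies case-by-case. The first step is to bound the valency $k$: by structural results of Koolen and collaborators on distance-regular graphs with bounded smallest eigenvalue, a DRG with $\theta_{\min} \geq -3$ and sufficiently large valency must be geometric, since the Delsarte/Hoffman clique bound combined with large $k$ forces a clique-cover by maximum cliques of size $1 - k/\theta_{\min}$. Together with the standard diameter bound for DRGs of bounded smallest eigenvalue, this confines a non-geometric $\Gamma$ to finitely many feasible intersection arrays $\iota(\Gamma) = \{b_0,\dots,b_{d-1};c_1,\dots,c_d\}$.

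With the finite list in hand, the natural split is by the values of $c_2$ and $a_1$, since these control the local combinatorics. The case $c_2 = 1$ is the most delicate: $\Gamma$ is then locally a disjoint union of cliques, and most of the listed examples appear here (the Odd graph $O_4$, the Sylvester graph, the Perkel graph, the second subconstituent of the Hoffman–Singleton graph, the symplectic $7$-cover of $K_9$, the Coxeter, Biggs–Smith, dodecahedron, Wells, and Klein graphs, together with most of the putative arrays). For $c_2 \geq 2$ one recovers the halved cubes, the Gosset graph, the icosahedron, the Doro and Doob graphs, and the remaining putative arrays $\{9,6,1;1,2,9\}$, $\{15,10,1;1,2,15\}$ and $\{18,12,1;1,2,18\}$. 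For each surviving array I would either invoke a known uniqueness theorem to identify $\Gamma$, or rule it out using Krein conditions, integrality of multiplicities, Terwilliger's inequality, and the absolute bound. The bipartite and antipodal subcases are worth treating separately, since folding and doubling provide additional reductions to smaller arrays already on the list or already excluded.

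The main obstacle is ensuring the enumeration is genuinely exhaustive and that every feasible array is either identified with a known graph or honestly flagged as open. Even after bounding $k$ and $d$, the number of feasible arrays with $c_2 = 1$ can be sizeable, and several cannot be eliminated by elementary feasibility tests; these are precisely the putative entries that must appear in the statement. I expect the hardest technical work to lie in the bookkeeping for the $c_2 = 1$ case, together with tightening the valency bound enough to make the search tractable. Computer-assisted verification of parameter feasibility, in the spirit of the Brouwer–Cohen–Neumaier tables, is likely to play a supporting role throughout the case analysis.
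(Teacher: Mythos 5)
Your high-level plan—reduce to a finite search via bounds on the parameters and then classify case-by-case—matches the paper's strategy, but there are two substantive problems with the way you propose to get there.

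First, your central bounding step is not justified. You claim that "a DRG with $\theta_{\min}\ge -3$ and sufficiently large valency must be geometric." This is not a theorem, and it is not what the paper uses. The paper instead proves (Lemma~\ref{1}, via a tridiagonal interlacing argument applied to $R+3I$) that $k<3a_1-2c_2+9$; this bounds $k$ in terms of $a_1$, and a separate argument is needed to bound $a_1$. For $c_2=1$ the bound on $a_1$ comes from the vertex–clique incidence matrix argument giving $a_1\le t-2$ (Lemma~\ref{a1 c1=1 bound}); for $c_2\ge 2$ with induced quadrangles and $k\ge 2a_1+3$, bounding $a_1$ (Section~\ref{sec:a1bound}) requires controlling $\zeta,\gamma,\delta$ and the multiplicity $m_1$ via \cite[Theorem 4.4.4]{BCN}. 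The geometric conclusion you want only comes with additional hypotheses: Theorem~\ref{thm7.6} (Metsch + Van Dam--Koolen--Tanaka) requires $a_1\ge 5(c_2-1)$ and $k<4(a_1+1)-6(c_2-1)$, not just "$k$ large." Without this you do not actually have a finite search space. You also omit two reductions the paper needs before the computer search can even start: the Terwilliger case (no induced quadrangle), handled by Proposition~\ref{terwgp} and giving the icosahedron and the Doro graph, and the case $k\le 2a_1+2$ handled by Koolen--Park, which is where Taylor graphs (halved $6$-cube, Gosset) and the halved $7$-cube enter.

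Second, your sorting of the examples into the $c_2=1$ and $c_2\ge 2$ cases is factually wrong, and your assessment of which case is hardest is inverted. The Klein graph has $\iota=\{7,4,1;1,2,7\}$, and the three putative arrays $\{9,6,1;1,2,9\}$, $\{15,10,1;1,2,15\}$, $\{18,12,1;1,2,18\}$ all have $c_2=2$: none of them arise in the $c_2=1$ analysis, whereas you list Klein and "most of the putative arrays" there. Likewise the icosahedron, Doro and Doob graphs all have $c_2\ge 2$. The $c_2=1$ case is in fact the \emph{easier} one in this setting, precisely because the sharp bound $a_1\le k/(a_1+1)-2$ confines it to the seven pairs in Lemma~\ref{5.1} and then known classifications of low-valency DRGs finish it; the heavy lifting (diameter bound $D\le 4$ via Proposition~\ref{diambound}, the bound $a_1<100$, the computer enumeration) is all in the $c_2\ge 2$, $k\ge 2a_1+3$ branch.
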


{\bf Remark:} When $\iota(\Gamma)=\{9,6,1;1,2,9\}$, by  \cite{spence2000}  we know that there exist exactly two non-geometric examples (up to isomorphism). When $\iota(\Gamma)=\{15,10,1;1,2,15\}$, there also exist non-geometric examples by  \cite{Brouwer2003}. Section \ref{sec:compD3} below contains a detailed discussion of these two (and other) cases.

{\bf Remark:}The case $D=2$, the strongly regular case, requires (as usual) very different techniques and is not addressed in this paper. The situation is discussed in the recent paper \cite{geb}, and the interested reader is referred there.

This paper is devoted to the proof and discussion for this result. The situation is fairly well understood when  $\theta_{\min} \geq -2$, so we will generally assume $-3 \leq \theta_{\min} < -2$. All distance-regular graphs are known when $a_1$ is large in relation to $k$, in particular when $k \leq 2a_1 +2$  (see \cite{koo-park}).  If $k \leq 2a_1 +2$, then we have a Taylor graph, a line graph, the Johnson graph $J(7,3)$ or the halved 7-cube.  A line graph has smallest eigenvalue at least $-2$  and the Johnson graph $J(7,3)$ is geometric.  So if $k \leq  2a_1 + 2$ we only have to consider the Taylor graphs and the halved 7-cube.

The rest of the paper is organized as follows. In Sections  \ref{prelim} and \ref{lin_alg_prelim}, we will give some definitions and preliminary results.  In Section \ref{c_2=1}, we consider the case when $c_2=1$; as is often the case in this field, this requires different tools than the case $c_2 > 1$. In Section \ref{sec:Taylor} we give the classification when $\Gamma$ is a Taylor graph. In Section \ref{sec:terwgraphs} we give the classification when $\Gamma$ is a
Terwilliger graph. In Section \ref{c2geq2} we determine a diameter bound and  give a  bound for $c_2$ when $\Gamma$ is non-geometric and $-3\leq \theta_{\min}<-2, c_2\geq 2, k \geq 2a_1+3$. In Section \ref{sec:a1bound}, in order to search by computer, we use the bounds in Section \ref{c2geq2} to give an upper bound on $a_1$. In Section \ref{sec:comput} we present the resulting computational results. In Section \ref{sec:proof} we give the proof of the main theorem.

\if2
\section{Outline of paper} \label{outline}

Because there are a number of separate cases considered, we will give an overview of the paper in this section. We are interested in the classification of all non-geometric distance-regular graphs of diameter at least 3 having smallest eigenvalue -3 or larger. The situation is fairly well understood when $\theta_{\min} \geq -2$, so we will generally assume $-3 \leq \theta_{\min} < -2$. All distance-regular graphs are known when $a_1$ is large in relation to $k$, in particular when $k \leq 2a_1 +2$ (see \cite{KP2}). We will therefore assume that this is not the case, namely that $k \geq 2a_1 +2$. In this case, our graph must be a Taylor graph, a line graph, the Johnson graph $J(7,3)$, or the halved $7$-cube.

\gskip

The following two sections, Sections \ref{prelim} and \ref{lin_alg_prelim}, give the necessary preliminary combinatorial and linear algebraic facts, respectively. These sections also contain the required definitions.

\gskip

The remaining sections comprise the proof of Theorem \ref{main}. As is often the case in this field, different methods are required depending on whether $c_2=1$ or $c_2 \geq 2 $. The $c_2=1$ case is easier for this problem, and we will find all relevant graphs in Section \ref{c_2=1}.
\fi

\section{Combinatorial preliminaries} \label{prelim}

All the graphs considered in this paper are finite, undirected and
simple (for unexplained terminology and more details, see for example \cite{BCN} or \cite{drgsurvey}). Let $\Gamma$ be a
connected graph and let $V(\Gamma)$ be the vertex set of $\Gamma$. The {\it distance} $d(x,y)$ between
any two vertices $x,y$ of $\Gamma$
is the length of a shortest path between $x$ and $y$ in $\Gamma$. The {\it diameter} of $\Gamma$ is the maximum distance
occurring in $\Gamma$ and we will denote this by $D = D(\Gamma)$. For a vertex $x \in V(\Gamma)$, define $\Gamma_i(x)$ to be the set of
vertices which are at distance $i$ from $x~(0\le i\le
D)$, and when the choice of $x$ is unimportant we will simply write $\Gamma_i$. In addition, define $\Gamma_{-1}(x)=\Gamma_{D+1}(x)
= \emptyset$. We write $x\sim_{\Gamma} y$ or simply $x\sim y$ if two vertices $x$ and $y$ are adjacent in $\Gamma$. A connected graph $\Gamma$ with diameter $D$ is called
{\em distance-regular} if there are integers $b_i,c_i$ $(0 \le i
\leq D)$ such that for any two vertices $x,y \in V(\Gamma)$ with $d(x,y)=i$, there are precisely $c_i$
neighbors of $y$ in
$\Gamma_{i-1}(x)$ and $b_i$ neighbors of $y$ in $\Gamma_{i+1}(x)$
(cf. \cite[page 126]{BCN}). In particular, a distance-regular graph $\Gamma$ is regular with valency
$k := b_0$ and we define $a_i:=k-b_i-c_i$ for notational convenience.
The numbers $a_i$, $b_{i}$ and $c_i~(0\leq i\leq D)$ are called the {\it
intersection numbers} of $\Gamma$, and the sequence $\{b_0,b_1, \ldots, b_{D-1};c_1,c_2, \ldots , c_D\}$ is the {\it intersection array} of $\Gamma$, we will denote it by $\iota(\Gamma)$. Note that always $c_0=b_D=0$, $b_0 = k$ and $c_1=1$.

There are two other related classes of graphs worth mentioning here. The first are the {\it strongly regular} graphs, which are simply the distance-regular graphs of diameter $2$. We say that a strongly regular graph $\Gamma$ has parameters $(v, k, \lambda, \mu)$, where $v:=|V(\Gamma)|$, $\lambda := a_1$, and $\mu := c_2$. The theory of these graphs is significantly different than that of higher diameter distance-regular graphs, and for this reason they are often identified separately.

The intersection numbers of a distance-regular graph $\Gamma$ with diameter $D$ and valency $k$ satisfy
(cf. \cite[Proposition 4.1.6]{BCN})\\

\noindent (i) $k=b_0> b_1\geq \cdots \geq b_{D-1}$;\\
(ii) $1=c_1\leq c_2\leq \cdots \leq c_{D}$;\\
(iii) $b_i\ge c_j$ \mbox{ if }$i+j\le D$.\\

Moreover, if we fix a vertex $x$ of $\Gamma$, then $k_i(x):=|\Gamma_i(x)|$ does not depend on the
choice of $x$ as $c_{i} k_{i}(x) =
b_{i-1} k_{i-1}(x)$ holds for $i =1, 2, \cdots, D$, and thus $k_i:=k_i(x) = \frac{b_0 b_1 \cdots b_{i-1}}{c_1 c_2 \cdots  c_{i}}$. Define $k_0=1$ and $k:=k_1$. So $v=|V(\Gamma)|=1+k+k_2+\cdots+k_D$.

By the \emph{eigenvalues}  of a graph $\Gamma$, we mean the eigenvalues  of its adjacency matrix~$A:=A(\Gamma)$. If the $\Gamma$ is distance-regular of diameter $D$, its adjacency matrix $A$ has always exactly $D+1$ distinct eigenvalues, and we will write $k=\theta_0 > \theta_1 > \cdots > \theta_D$ to describe these distinct eigenvalues, and refer to $\theta_0, \ldots , \theta_D$ as simply the {\it eigenvalues of $\Gamma$}. Let $m_i$  be the multiplicity of $\theta_i$ for $i=0,1,\ldots,D$, where $m_0=1$, then we denote \emph{spectrum} of $\GG$ by the set $\{[\theta_0]^{m_0},[\theta_1]^{m_1},\cdots,[\theta_D]^{m_D}\}$.
The matrix $A$ has all 0's on the diagonal, and thus its trace is 0, which implies that the sum of the eigenvalues (counting multiplicities) must be 0. In particular, the eigenvalue of most importance for this paper, $\theta_D$, is always negative.

 We recall that the subgraph {\it induced} on a set of vertices $W \subseteq V(\Gamma)$ is the subgraph with vertex set $W$ and edge set given by all edges in $\Gamma$ connecting two vertices in $W$.
The structure of the edges within the subgraph induced on the set $\Gamma_1(x)$ is of special importance in the theory of distance-regular graphs, and we will use the notation $\Delta(x)$ to denote the subgraph induced on $\Gamma_1(x)$, and refer to $\Delta(x)$ as the {\it local graph} at $x$.

A graph is called a {\it clique} if any two of its vertices are adjacent, and is called {\it coclique} if any two of its vertices are nonadjacent.  It is an intriguing fact that the cliques of a strongly regular or distance-regular graph have a close connection to the smallest eigenvalue of the graph, as we now describe. If $\Gamma$ is a distance-regular graph with valency $k$ and the smallest eigenvalue $\theta_{\min}$, then any clique $C$ of $\Gamma$ satisfies the inequality

\begin{equation}\label{dc-bd}
|V(C)|\leq 1+\frac{k}{|\theta_{\min}|}.
\end{equation}
This is known as the {\it Delsarte bound}, as it was proved by Delsarte in \cite{Delsarte} for strongly regular graphs, later to be generalized by Godsil in \cite{godsil-93-paper} to distance-regular graphs;  see \cite[Proposition 4.4.6]{BCN} for two short proofs of this. Equality may hold in (\ref{dc-bd}), and any clique for which it does is called a {\it Delsarte clique}. It is clear that if $\Gamma$ contains a Delsarte clique then $\theta_{\min}$ must be an integer which divides $k$, since $1+\frac{k}{|\theta_{\min}|}$ must be an integer ($\theta_{\min}$ is the root of a polynomial with integer coefficients, so it can not be a non-integer rational number).

An important concept for us will be that of a {\it geometric}
distance-regular graph; this is a distance-regular graph which contains a set of Delsarte cliques ${\cal C}$ such that every edge of $\Gamma$ lies
in a unique member of ${\cal C}$. This was first defined by Bose \cite{Bose} in relation to strongly regular graphs, later to be extended to distance-regular graphs by Godsil \cite{godsil-93-paper}.
Many of the most well-known families of distance-regular graphs are in fact geometric, including the Johnson and Hamming graphs.

If $\Gamma$ is a distance-regular graph, then for any $i \in \{1, \ldots , D\}$ its {\it distance $i$-graph} is the graph with the same vertex set as $\Gamma$, with two vertices adjacent if their distance is $i$ in $\Gamma$. $\Gamma$ is {\it bipartite} if its distance 2-graph is disconnected, and {\it antipodal} if its distance $D$-graph is disconnected. $\Gamma$ is {\it primitive} if its distance $i$-graph is connected for all $i$, and {\it imprimitive} otherwise. A remarkable theorem proved by Smith (see \cite[Theorem 4.2.1]{BCN}) shows that any imprimitive distance-regular graph with valency $k\geq 3$ is either bipartite or antipodal (or both).

An important fact about distance-regular graphs, and one we will take full advantage of, is that, loosely speaking, all small distance-regular graphs are known. To be precise,

\begin{itemize}

    \item All distance-regular graphs with $k=3$ are known (see \cite{k=3} or \cite[Theorem 7.5.1]{BCN}).

    \item All distance-regular graphs with $k=4$ are known (see \cite{brouwer1999distance}), except that there is one graph on the list (generalized hexagon of order 3) which is not yet known to be uniquely determined by its intersection array.

    \item All feasible parameter sets for strongly regular graphs with at most 512 vertices are known (see \cite[Chapter 12]{SRG}).

    \item All feasible parameter sets for primitive distance-regular graphs with diameter 3 and at most 1024 vertices are known (see \cite[Chapter  14]{BCN}).

    \item All feasible parameter sets for non-bipartite distance-regular graphs with diameter 4 and at most 4096 vertices are known (see \cite[Chapter 14]{BCN}).

    \item All feasible parameter sets for arbitrary distance-regular graphs with diameter 5 or more and at most 4096 vertices are known (see \cite[Chapter 14]{BCN}).
\end{itemize}

The last three items here are of particular interest to us, for the following reason. In many cases, we will be able to get a diameter bound on the graph in question, and from this a bound on the number of vertices. When this number is less than the relevant number above, we will often simply consult the tables in order to determine whether any graphs satisfy our requirements. Note also that bipartite graphs are not relevant to our results here, since a regular bipartite graph of valency $k$ always has $-k$ as its smallest eigenvalue; if $k \geq 4$ then this brings it below our cutoff of $-3$, and the $k=3$ case is all classified anyway. Therefore, if a graph has diameter at least $4$ and at most 4096 vertices, we know its intersection array must be found in the tables in \cite{BCN}. The same holds when the diameter $D=3$ and at most $1024$ vertices, except that we will also have to worry about the antipodal (and not bipartite) case.

\section{Linear algebraic preliminaries} \label{lin_alg_prelim}

Linear algebraic methods are prominent in the study of distance-regular graphs, and we will make heavy use of them. In this section, we collect some facts that we will need.

Eigenvalue interlacing will be important for us, in particular the following theorem, which is a simplification of the main theorem in \cite{haem} (only including the parts of it that we will use).

\begin{thm}[{cf. \cite{haem} }]\label{interlace}
	Let $A$ be a real symmetric $n \times n$ matrix with eigenvalues $\theta_1 \geq \cdots \geq \theta_n$. For some $m\leq n $, let $S$ be an $n \times m$ real matrix such that $S^\top S$ is the $m \times m$ identity matrix, and let $B = S^\top A S$ with eigenvalues $\lambda_1 \geq \cdots \geq \lambda_m$. Then the eigenvalues of B interlace those of A, that is, $\theta_i \geq \lambda_i \geq \theta_{i+n-m}$, $1 \leq i \leq m$.
\end{thm}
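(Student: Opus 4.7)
The natural route is through the Courant--Fischer min-max characterization of eigenvalues of real symmetric matrices. The key observation is that the hypothesis $S^\top S = I_m$ makes $S$ an isometric embedding of $\mathbb{R}^m$ into $\mathbb{R}^n$: for any nonzero $x \in \mathbb{R}^m$,
$$ x^\top B x = x^\top S^\top A S x = (Sx)^\top A (Sx), \qquad x^\top x = x^\top S^\top S x = (Sx)^\top (Sx), $$
so the Rayleigh quotient of $B$ at $x$ equals the Rayleigh quotient of $A$ at $Sx$, and $Sx \neq 0$.

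For the upper bound, I would use the ``max-min'' form: $\lambda_i = \max \{\min_{0 \neq x \in U} x^\top B x / x^\top x : U \subseteq \mathbb{R}^m,\ \dim U = i\}$. For any such $U$, the image $SU$ is an $i$-dimensional subspace of $\mathbb{R}^n$ (injectivity of $S$ on $U$ comes free from $S^\top S = I_m$), so
$$ \min_{0 \neq x \in U} \frac{x^\top B x}{x^\top x} = \min_{0 \neq y \in SU} \frac{y^\top A y}{y^\top y} \le \max_{\substack{V \subseteq \mathbb{R}^n \\ \dim V = i}} \min_{0 \neq y \in V} \frac{y^\top A y}{y^\top y} = \theta_i. $$
Taking the maximum over $U$ gives $\lambda_i \le \theta_i$.

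For the lower bound, I would apply the dual ``min-max'' characterization: $\lambda_i = \min \{\max_{0 \neq x \in U} x^\top B x / x^\top x : U \subseteq \mathbb{R}^m,\ \dim U = m-i+1\}$, and run exactly the same image-subspace argument, noting that an $(m-i+1)$-dimensional image subspace witnesses the min-max formula for $\theta_{n-(m-i+1)+1} = \theta_{i+n-m}$.

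\emph{Expected obstacle.} There is essentially no hard step: once one is comfortable with both forms of Courant--Fischer, everything reduces to verifying that $S$ is an isometry on subspaces, which is immediate from $S^\top S = I_m$. The only place to be careful is matching the indexing in the min-max and max-min formulas so that the two sides of the interlacing inequality come out with the right subscripts; getting the shift $n-m$ correct in the lower bound is the easiest spot to slip up.
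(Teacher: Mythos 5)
Your proof is correct. The paper itself gives no proof of Theorem \ref{interlace}; it is stated as a known result and cited to Haemers' interlacing paper, so there is no ``paper's own proof'' to compare against. Your Courant--Fischer argument is the standard one and the index bookkeeping is right: for the lower bound, an $(m-i+1)$-dimensional $U$ maps to an $(m-i+1)$-dimensional $SU$, and the min-max formula for $A$ over subspaces of that dimension yields $\theta_{n-(m-i+1)+1}=\theta_{n-m+i}$, as required. For context, Haemers' original proof is a mild repackaging of the same idea: instead of invoking both variational characterizations, he fixes orthonormal eigenbases $v_1,\dots,v_m$ of $B$ and $u_1,\dots,u_n$ of $A$, uses a dimension count to find a nonzero $s_i\in\langle v_1,\dots,v_i\rangle$ with $Ss_i\in\langle u_i,\dots,u_n\rangle$, and then compares the (equal) Rayleigh quotients of $B$ at $s_i$ and of $A$ at $Ss_i$; this produces $\lambda_i\le\theta_i$ directly, with a mirrored argument for the other side. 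Your version buys cleaner symmetry between the two bounds at the cost of citing both forms of Courant--Fischer; Haemers' version needs only one Rayleigh-quotient inequality per side but requires the dimension-count step. Either is a complete proof.
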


An {\it equitable partition} of a graph $\Gamma$ is a partition $\pi$ of the vertex set of $\Gamma$ into classes $\{P_1, P_2, \ldots , P_m\}$ such that for every pair of (not necessarily distinct) indices $i, j \in \{1, 2, \ldots , m\}$
there is a nonnegative integer $s_{i,j}$ such that each vertex $v \in P_i$ has exactly $s_{i,j}$
neighbors in $P_j$, independent of the choice of $v$. We call the matrix $B=(s_{i,j})_{1\leq i,j\leq m}$ the \emph{quotient matrix} of $\pi$. The following corollary shows that the eigenvalues of $B$ interlace the eigenvalues of $\Gamma$.

\begin{cor}[{cf.\cite{haem}}] \label{interlace-partition}
	Let $\Gamma$ be a graph on $n$ vertices with eigenvalues $\theta_1 \geq \cdots \geq \theta_n$, and suppose that $\pi$ is an equitable partition with quotient matrix $B = (s_{i,j})_{1\leq i,j\leq m}$. Let the eigenvalues of $B$ be $\lambda_1 \geq \cdots \geq \lambda_{m}$. Then, for any $i$ with $1 \leq i \leq m$, we have $\theta_i \geq \lambda_i \geq \theta_{i+n-m}$.
\end{cor}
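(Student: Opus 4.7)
The plan is to derive the corollary as a direct application of Theorem \ref{interlace}, by building an appropriate matrix $S$ out of the equitable partition. Specifically, I would let $S$ be the $n\times m$ matrix whose $j$-th column is $|P_j|^{-1/2}\chi_j$, where $\chi_j\in\{0,1\}^n$ is the characteristic vector of the class $P_j$. Since the classes $P_1,\ldots,P_m$ partition the vertex set, the columns of $S$ are pairwise orthogonal, and each has been normalized to unit length, so $S^\top S=I_m$. This verifies the hypothesis of Theorem \ref{interlace}.

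Next I would compute $S^\top A S$ entrywise. Because $\pi$ is equitable, every vertex of $P_i$ has exactly $s_{i,j}$ neighbors in $P_j$, so $\chi_i^\top A\chi_j=|P_i|\,s_{i,j}$. Dividing by the normalization factors gives
$$
(S^\top A S)_{i,j} \;=\; \frac{1}{\sqrt{|P_i|\,|P_j|}}\,\chi_i^\top A\chi_j \;=\; \sqrt{\tfrac{|P_i|}{|P_j|}}\,s_{i,j}.
$$
This is not literally the quotient matrix $B=(s_{i,j})$, which is the one subtle point of the argument. However, if one sets $D=\operatorname{diag}\!\bigl(\sqrt{|P_1|},\ldots,\sqrt{|P_m|}\bigr)$, a direct check shows that $S^\top A S = D\,B\,D^{-1}$. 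Hence $S^\top A S$ and $B$ are similar and have the same multiset of eigenvalues $\lambda_1\geq\cdots\geq\lambda_m$.

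Applying Theorem \ref{interlace} to $A$ and $S^\top A S$ then yields $\theta_i\geq\lambda_i\geq\theta_{i+n-m}$ for $1\leq i\leq m$, which is exactly the claim. There is no real obstacle in this proof; the only thing that requires a moment of care is to see that $S^\top AS$ has the same spectrum as $B$ despite the two matrices not being equal (the discrepancy is an innocuous diagonal similarity). Everything else is bookkeeping, so the proposal is simply to write out the construction of $S$, verify $S^\top S=I_m$, record the formula for $(S^\top AS)_{i,j}$, observe the similarity to $B$, and quote Theorem \ref{interlace}.
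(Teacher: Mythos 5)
Your proof is correct and is essentially the standard argument found in Haemers' paper \cite{haem}; the paper itself states this corollary with only a citation and provides no proof of its own, so there is nothing in the text to compare against. The one point worth being explicit about — that $S^\top AS = DBD^{-1}$ is a diagonal similarity rather than equality, so the spectra agree even though the matrices do not — is exactly what you flagged, and you handled it correctly.
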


The interlacing results lead to a number of useful facts pertaining to distance-regular graphs.

\begin{lem} \label{D4}
	For a distance-regular graph $\GG$ with diameter $D \geq 4$, we have $\theta_1 \geq a_1+1$.
\end{lem}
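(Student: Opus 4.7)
The strategy is eigenvalue interlacing applied to a carefully chosen induced subgraph. Since $D(\Gamma)\geq 4$, fix vertices $x,y\in V(\Gamma)$ with $d(x,y)\geq 4$. The key geometric observation is that the sets $\{x\}\cup\Gamma_1(x)$ and $\{y\}\cup\Gamma_1(y)$ are disjoint and span no edges between them: any $u\in\Gamma_1(x)$ satisfies $d(u,y)\geq d(x,y)-1\geq 3$, so an edge between the two sides would force $d(x,y)\leq 3$. Hence the subgraph $H$ induced on $\{x,y\}\cup\Gamma_1(x)\cup\Gamma_1(y)$ is a disjoint union of two isomorphic components, each being the ``cone'' $K_1\ast\Delta(z)$ obtained by joining the apex $z$ to every vertex of the local graph $\Delta(z)$, for $z\in\{x,y\}$.

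Next, I would compute the largest eigenvalue of one cone. Because $\Delta(x)$ is $a_1$-regular on $k$ vertices, the partition $\{\{x\},\,\Gamma_1(x)\}$ is equitable on the cone with quotient matrix
\[
Q=\begin{pmatrix} 0 & k\\ 1 & a_1\end{pmatrix},
\]
whose eigenvalues are $\tfrac{1}{2}\bigl(a_1\pm\sqrt{a_1^2+4k}\bigr)$. Since the cone is connected, Perron--Frobenius identifies the larger root $\lambda^{\ast}=\tfrac{1}{2}\bigl(a_1+\sqrt{a_1^2+4k}\bigr)$ as its spectral radius. A short calculation shows $\lambda^{\ast}\geq a_1+1$ is equivalent to $k\geq a_1+1$, which certainly holds since $k=a_1+b_1+1$ and $b_1\geq 1$ whenever $D\geq 2$.

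Now I would combine these ingredients. Because $H$ has two isomorphic connected components, its spectrum is the disjoint union (with multiplicity) of the two cone spectra, so its two largest eigenvalues coincide and equal $\lambda^{\ast}$; in particular $\lambda_2(H)=\lambda^{\ast}\geq a_1+1$. Applying the interlacing statement of Theorem \ref{interlace} to the inclusion of $H$ in $\Gamma$ (taking $S$ to be the $|V(\Gamma)|\times|V(H)|$ matrix whose columns are the standard basis vectors indexed by $V(H)$, so that $S^{\top}AS$ is the adjacency matrix of $H$), we get $\theta_1\geq \lambda_2(H)\geq a_1+1$, where $\theta_1$ is the second-largest eigenvalue of $\Gamma$ counted with multiplicity; this matches the convention of the paper since $\theta_0=k$ has multiplicity one.

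The main thing to get right is the disjointness argument: the entire proof hinges on using $d(x,y)\geq 4$ to exclude edges between $\Gamma_1(x)$ and $\Gamma_1(y)$, which is precisely what yields the doubled Perron eigenvalue in $H$ and turns the interlacing bound from $\theta_1\geq\lambda^{\ast}$ (which would be trivial from a single cone, giving only $\theta_1\geq \lambda^{\ast}$ via $\theta_0\geq\lambda^{\ast}$) into the desired bound on $\theta_1$. This is why the hypothesis $D\geq 4$ (rather than $D\geq 2$) is essential.
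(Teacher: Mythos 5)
Your proof is correct and takes essentially the same route as the paper: a pair of vertices at distance at least four gives two induced cones $\{z\}\cup\Gamma_1(z)$ with no edges between them, the equitable partition $\{\{z\},\Gamma_1(z)\}$ yields the quotient matrix $Q=\begin{pmatrix}0&k\\1&a_1\end{pmatrix}$ with Perron root $\tfrac12\bigl(a_1+\sqrt{a_1^2+4k}\bigr)\geq a_1+1$, and interlacing the two-component subgraph against $\Gamma$ gives $\theta_1\geq a_1+1$. One tiny imprecision worth noting: the two components need not be \emph{isomorphic}, since local graphs at distinct vertices of a distance-regular graph need not be isomorphic; what you actually have is that both local graphs are $a_1$-regular on $k$ vertices, so both cones admit the same equitable quotient $Q$ and hence the same Perron value, which is all the argument requires (and is how the paper phrases it).
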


\begin{proof}
	
Let  $x$ be a vertex of $\GG$, and $H$ be the subgraph induced on $\{x\} \cup \GG_1(x)$, then the equitable partition of $H$ formed by the two sets $\{x\}$ and $\GG_1(x)$ leads to the quotient matrix
	
	\begin{equation}
	\begin{pmatrix}
	0  & k        \\
	1      & a_1
	\end{pmatrix}.
	\end{equation}
	
\noindent The largest eigenvalue of this matrix is $\frac{1}{2}(a_1 + \sqrt{a_1^2 + 4k})$, and thus the largest eigenvalue of $H$ is at least this large by Corollary \ref{interlace-partition}. Let now $x,y$ be two vertices with $d(x,y)=4$, and let $H'$ be the subgraph induced on $\{x\} \cup \GG_1(x) \cup \GG_1(y) \cup \{y\}$. Then $H'$ has two connected components, and each has largest eigenvalue at least $\frac{1}{2}(a_1 + \sqrt{a_1^2 + 4k}) \geq a_1+1$ by the previous argument. By Corollary \ref{interlace-partition} again, $\GG$ has two eigenvalues at least the size of $a_1+1$, in particular $\theta_1 \geq a_1+1$.
\end{proof}

A {\it tridiagonal matrix} is a square matrix which is zero for all elements other than those on the main diagonal and the diagonals immediately above and below the main diagonal. Such matrices have many interesting properties, and these will be valuable to us due the following proposition (see \cite[pp. 128-130]{BCN}).

\begin{lem} \label{subconst quot mat}

For a distance-regular graph $\Gamma$ of diameter $D$, consider the tridiagonal $(D+1) \times (D+1)$-matrix

    $$L = \begin{pmatrix}
   0  & k   &  & &   &      & \\
   1  & a_1 & b_1 & & &     & \\
      & c_2 & a_2 & b_2 & & & \\
      &     & \ddots & \ddots &\ddots & & \\
      &     &  &  &      & b_{D-1}\\
      &     &    &   & c_D &a_D
\end{pmatrix}.$$

\noindent Then the eigenvalues of $L$ are all distinct and agree with the distinct eigenvalues of $\Gamma$. Further, consider the tridiagonal $D \times D$-matrix

$$R:=\begin{pmatrix}
-1  & b_1       &     &      & & \\
1   & k-b_1-c_2 & b_2 &      & & \\
    & c_2 & k-b_2-c_3 &  & & \\
   &   &   \ddots & \ddots    &\ddots&  \\
    &     &           &      & & b_{D-1}\\
    &     &           & &c_{D-1}& k-b_{D-1}-c_D
\end{pmatrix}.$$

\noindent Then the eigenvalues of $R$ are all distinct and agree with the distinct eigenvalues of $\Gamma$ other than $k$.
\end{lem}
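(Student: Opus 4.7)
The proof splits naturally into two parts, and both rely on the equitable distance partition at a vertex combined with some algebraic bookkeeping.

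For $L$: fix any vertex $x$ of $\GG$. By the very definition of distance-regularity, the partition $\pi := \{\{x\}, \GG_1(x), \GG_2(x), \ldots, \GG_D(x)\}$ is equitable, and a direct count of the neighbors of a vertex in $\GG_i(x)$ lying in each $\GG_j(x)$ shows that $L$ (or its transpose, depending on orientation) is its quotient matrix. By Corollary~\ref{interlace-partition}, every eigenvalue of $L$ is an eigenvalue of $A(\GG)$. Next, the products of neighboring off-diagonal entries of $L$ are $1 \cdot k$ and $c_i b_{i-1}$ for $2 \leq i \leq D$, all strictly positive; after a diagonal conjugation this makes $L$ similar to a real symmetric tridiagonal matrix with strictly positive off-diagonal entries, which is well-known to have distinct eigenvalues. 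Since $A(\GG)$ itself has exactly $D+1$ distinct eigenvalues (a standard fact for distance-regular graphs, following, e.g., from the linear independence of the distance matrices $A_0, A_1, \ldots, A_D$), these must coincide with the $D+1$ eigenvalues of $L$.

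For $R$: the plan is to prove $\det(tI - L) = (t-k)\det(tI - R)$, which, combined with the first part, identifies the eigenvalues of $R$ as precisely the distinct eigenvalues of $\GG$ other than $k$. Each row of $L$ sums to $k$ (interior rows use $a_i + b_i + c_i = k$, the top row uses $b_0 = k$, and the bottom row uses $a_D + c_D = k$ since $b_D = 0$), so the all-ones vector $\mathbf{1} \in \RR^{D+1}$ is an eigenvector of $L$ for the eigenvalue $k$, and $(t-k)$ divides $\det(tI-L)$. To identify the quotient as $\det(tI - R)$, one approach is to introduce a basis $\{\mathbf{1}, u_1, u_2, \ldots, u_D\}$ of $\RR^{D+1}$ in which $L$ takes the block upper-triangular form $\bigl(\begin{smallmatrix} k & \ast \\ 0 & M \end{smallmatrix}\bigr)$, and to check by direct computation that $M$ is similar to $R$. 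An equivalent, purely algebraic route is to write $\phi_i(t)$ and $\psi_i(t)$ for the leading $i\times i$ characteristic polynomials of $tI-L$ and $tI-R$, and to verify $\phi_{D+1}(t) = (t-k)\psi_D(t)$ by induction on $D$ using the standard three-term recurrences $\phi_{i+1}(t) = (t-a_i)\phi_i(t) - b_{i-1}c_i\, \phi_{i-1}(t)$ and $\psi_{i+1}(t) = (t - k + b_i + c_{i+1})\psi_i(t) - c_i b_i\, \psi_{i-1}(t)$ for characteristic polynomials of tridiagonal matrices.

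The main technical obstacle lies in picking the right basis, or, equivalently, in setting up the induction with the correct auxiliary quantities: naive choices such as $u_i = \sum_{j > i+1} e_j$ produce a block whose off-diagonal entries fail to match those of $R$ by terms of the form $c_i - c_{i+1}$, so the basis must be adjusted carefully. The anomalous diagonal entries $-1$ and $k - b_{i-1} - c_i$ appearing in $R$, rather than the more symmetric entries $a_{i-1}$ one might have expected, reflect precisely this adjustment. Once the basis (or the inductive invariant) is correctly identified, the claim that $R$ has $D$ distinct eigenvalues follows from the same diagonal-conjugation-plus-symmetric-tridiagonal argument used for $L$, since the off-diagonal products $1\cdot b_1$ and $c_i b_i$ are again strictly positive.
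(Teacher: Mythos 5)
The paper does not prove this lemma itself; it cites \cite[pp.~128--130]{BCN} and moves on, so there is no internal proof for comparison. Your strategy is the standard one and is sound in outline, but two steps are not closed.

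For $L$, Corollary~\ref{interlace-partition} only gives interlacing, which by itself does not yield that every eigenvalue of $L$ is an eigenvalue of $A(\Gamma)$. You need the stronger, classical fact about equitable partitions: if $S$ is the $(D+1)$-column characteristic matrix of the distance partition, then $AS = SL$, so an eigenvector $w$ of $L$ lifts to the nonzero vector $Sw$, which is an eigenvector of $A$ with the same eigenvalue. With that correction, the rest of your first paragraph (diagonal conjugation for distinctness, plus the dimension count $D+1$) does finish the identification.

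For $R$, the identity $\det(tI - L) = (t-k)\det(tI - R)$ is the whole content of the second half, and you correctly flag that both the naive basis choice and the naive induction fail, but you never actually produce the argument. The missing ingredient is a strengthened inductive invariant. Writing $p_j(t)$ and $q_j(t)$ for the leading $j\times j$ minors of $tI - L$ and $tI - R$ (with $p_0 = q_0 = 1$), one proves by induction that
\begin{equation*}
p_{j+1}(t) - (t-k)\,q_j(t) \;=\; b_j\,p_j(t), \qquad 0 \le j \le D.
\end{equation*}
The base cases are $p_1 - (t-k)q_0 = t - (t-k) = k = b_0 p_0$ and $p_2 - (t-k)q_1 = (t^2 - a_1 t - k) - \big(t^2 - (k-1)t - k\big) = (k-a_1-1)t = b_1 p_1$; the inductive step is a routine but non-trivial computation using the three-term recurrences $p_{j+1} = (t-a_j)p_j - b_{j-1}c_j\,p_{j-1}$ and $q_j = (t - k + b_{j-1} + c_j)q_{j-1} - b_{j-1}c_{j-1}\,q_{j-2}$ together with two applications of $a_i + b_i + c_i = k$. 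The case $j = D$ then gives the desired factorization because $b_D = 0$. The extra term $b_j p_j$ on the right is precisely the correction your naive block triangularization lacked, so without identifying it the proof sketch remains incomplete.
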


We will derive some consequences from these useful facts, but first we must take care of a technicality. Many powerful techniques, including eigenvalue interlacing, can be applied to symmetric matrices, but the matrices $R$ and $L$ are not symmetric. They can, however, be transformed in a simple manner into symmetric matrices; this is explained in the following lemma, which is certainly well known but is included for completeness. We omit the proof, as it is a direct calculation.

\begin{lem} \label{tridiag}
	Let $$M:=\begin{pmatrix}
	z_1  & y_1       &     &      & & \\
	x_2   & z_2 & y_2 &      & & \\
	& x_3 & z_3 &\ddots& & \\
	&     & \ddots    &\ddots& & \\
	&     &           &      & & y_{n-1}\\
	&     &           & &x_{n}& z_n
	\end{pmatrix}$$ be a tridiagonal matrix with $y_i$, $x_{i+1} > 0$ for $i = 1, \ldots , n-1$, and let $$N = diag(1,\sqrt{\frac{x_2}{y_1}}, \sqrt{\frac{x_2x_3}{y_1y_2}}, \ldots, \sqrt{\frac{x_2 x_3 \ldots x_n}{y_1y_2 \ldots y_{n-1}}}).$$ Then the matrix $\tilde M = N^{-1} MN$ takes the form
	
	$$\tilde M:=\begin{pmatrix}
	z_1  & \sqrt{x_2y_1}       &     &      & & \\
	\sqrt{x_2y_1}   & z_2 & \sqrt{x_3y_2} &      & & \\
	& \sqrt{x_3y_2} & z_3 &\ddots& & \\
	&     & \ddots    &\ddots& & \\
	&     &           &      & & \sqrt{x_ny_{n-1}}\\
	&     &           & &\sqrt{x_ny_{n-1}}& z_n
	\end{pmatrix}.$$
In particular, $\tilde M$ is symmetric, and $M$ and $\tilde{M}$ share the same spectrum.
\end{lem}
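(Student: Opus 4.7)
The plan is to exploit the fact that $\tilde M = N^{-1} M N$ is a similarity transformation, so $M$ and $\tilde M$ automatically share the same characteristic polynomial and hence the same spectrum. This disposes of the spectral claim with no work at all. The only remaining task is to verify the entry-by-entry form of $\tilde M$, which is a direct calculation since $N$ is diagonal.

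Writing $N = \mathrm{diag}(n_1, \ldots, n_n)$ with $n_1 = 1$ and $n_i = \sqrt{x_2 x_3 \cdots x_i / (y_1 y_2 \cdots y_{i-1})}$ for $i \geq 2$, conjugation by a diagonal matrix rescales entries by the simple rule $(N^{-1} M N)_{ij} = (n_j/n_i)\, M_{ij}$. Consequently $\tilde M$ is tridiagonal (same sparsity pattern as $M$) and its diagonal entries are unchanged, giving $\tilde M_{ii} = z_i$. The key ratio is $n_{i+1}/n_i = \sqrt{x_{i+1}/y_i}$, which follows instantly from the definition of $N$ and is well-defined because of the hypothesis $y_i, x_{i+1} > 0$. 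The superdiagonal then becomes
\[
\tilde M_{i,i+1} = \frac{n_{i+1}}{n_i}\, y_i = \sqrt{\frac{x_{i+1}}{y_i}}\, y_i = \sqrt{x_{i+1}\, y_i},
\]
while the subdiagonal becomes
\[
\tilde M_{i+1,i} = \frac{n_i}{n_{i+1}}\, x_{i+1} = \sqrt{\frac{y_i}{x_{i+1}}}\, x_{i+1} = \sqrt{x_{i+1}\, y_i}.
\]
Since these two agree, $\tilde M$ is symmetric and has exactly the form claimed in the lemma.

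There is essentially no obstacle here; the proof is pure bookkeeping. The only subtle point worth flagging, were one to write things out in full, is that the square roots appearing in $N$ (and hence the ratios $n_{i+1}/n_i$) are well-defined and nonzero precisely because of the strict positivity assumption on the $y_i$ and $x_{i+1}$. Without this, $N$ might fail to be invertible or real, and the similarity transformation could not be carried out. This is why the lemma is stated with the positivity hypothesis, and it is the only place that hypothesis is used.
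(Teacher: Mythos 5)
Your proof is correct and is exactly the direct calculation the paper alludes to when it says the proof is omitted as "a direct calculation": conjugation by the diagonal $N$ rescales entries by $n_j/n_i$, the ratio $n_{i+1}/n_i = \sqrt{x_{i+1}/y_i}$ symmetrizes the off-diagonal entries to $\sqrt{x_{i+1}y_i}$, and similarity preserves the spectrum. Nothing is missing, and your remark that strict positivity is what makes $N$ real and invertible is the right thing to flag.
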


\if2
{\bf Remark:} We remark that essentially this lemma was used in the proof of Corollary \ref{interlace-partition} in \cite{haem}.\\
\fi

The following lemma can be viewed as a version of eigenvalue interlacing, but it will be easier to give a more direct proof since we require a strict inequality.

\begin{lem} \label{tri_interlace}
Let $M = \{m_{i,j}\}_{i,j=1}^n$ be a symmetric, square, tridiagonal matrix with smallest eigenvalue $\theta$. If we let $M_k =\{m_{i,j}\}_{i,j=1}^k$, for some $k < n$, with smallest eigenvalue $\theta_k$, and also have $m_{j+1,j}$ nonzero for $j = 1, \ldots, n-1$, then $\theta_k > \theta$.
\end{lem}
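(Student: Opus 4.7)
The plan is to first get the weak inequality $\theta_k \geq \theta$ via Cauchy-style interlacing, and then promote it to a strict inequality using the fact that the sub-diagonal of $M$ is everywhere nonzero. For the weak inequality, I would apply Theorem \ref{interlace} with $S$ equal to the $n \times k$ matrix whose columns are the first $k$ standard basis vectors of $\mathbb{R}^n$; then $S^\top S = I_k$ and $S^\top M S = M_k$, so the smallest eigenvalue of $M_k$ is at least the smallest eigenvalue of $M$, that is $\theta_k \geq \theta$.

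For the strict inequality, I would argue by contradiction. Suppose $\theta_k = \theta$, and let $v = (v_1,\dots,v_k)^\top \in \mathbb{R}^k$ be a unit eigenvector of $M_k$ for $\theta_k$. Extend $v$ to $\tilde v = (v_1,\dots,v_k,0,\dots,0)^\top \in \mathbb{R}^n$. Then
\[
\tilde v^\top M \tilde v \;=\; v^\top M_k v \;=\; \theta_k \;=\; \theta \;=\; \min_{\|w\|=1} w^\top M w,
\]
so $\tilde v$ attains the minimum of the Rayleigh quotient for $M$. By the standard Courant–Fischer characterization of the smallest eigenvalue, every minimizer is an eigenvector of $M$ corresponding to $\theta$, hence $M\tilde v = \theta \tilde v$.

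Now I would read off a contradiction by inspecting rows of $M\tilde v = \theta\tilde v$ one index at a time. The $(k+1)$-th row gives $m_{k+1,k} v_k + 0 + 0 = 0$, and since $m_{k+1,k} \neq 0$ by hypothesis, we conclude $v_k = 0$. Using this in the $k$-th row of the eigenvalue equation $M_k v = \theta_k v$ yields $m_{k,k-1} v_{k-1} = 0$, so $v_{k-1} = 0$ since $m_{k,k-1} \neq 0$. A straightforward backward induction on the index, each step using the non-vanishing of the corresponding sub-diagonal entry of $M_k$, forces $v_{k-2} = v_{k-3} = \cdots = v_1 = 0$, contradicting $\|v\| = 1$. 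Hence the assumption $\theta_k = \theta$ fails, and $\theta_k > \theta$.

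The only mildly delicate step is invoking that a vector which achieves the minimum Rayleigh quotient must lie in the corresponding eigenspace; everything else is bookkeeping with the tridiagonal structure, and the non-vanishing of the subdiagonal entries is precisely what drives the descent. I do not foresee a real obstacle.
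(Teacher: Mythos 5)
Your proof is correct and takes essentially the same route as the paper: the weak inequality comes from the Rayleigh quotient / Courant–Fischer characterization, and the strict inequality follows by noting that equality would force the zero-padded vector to be an eigenvector of $M$, which the nonvanishing subdiagonal rules out. The only cosmetic difference is that you run a full backward induction to annihilate all entries of $v$, while the paper dispatches it in one step by examining the $(j{+}1)$-st coordinate where $j$ is the largest index of a nonzero entry of $\hat v$.
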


\begin{proof}
By the Courant minimax principle (see \cite{horn2012matrix}), we have
$$
\theta_k = \min_{v \in \RR^k \backslash \{0\}}\frac{v^\top M_k v}{v^\top v}.
$$	
Choose $v_k$ which achieves this minimum, and form a vector $\hat v \in \RR^n$ by concatenating $v_k$ with a zero vector; that is, the first $k$ entries of $\hat v$ agree with $v_k$, while the final $n-k$ entries are zeros. Again by the minimax principle,

$$
\theta \leq \frac{\hat v^\top M \hat v}{\hat v^\top \hat v} = \frac{v_k^\top M_k v_k}{v_k^\top v_k} = \theta_k.
$$

We are after a strict inequality, though, and to obtain that we may note that if we had $\theta = \theta_k$ then $\hat v$ must be an eigenvector of $M$ with eigenvalue $\theta$. However, if $j$ denotes the largest position of a nonzero component of $\hat v$ (so $1 \leq j \leq k < n$), then since $m_{j+1,j}$ is nonzero the $(j+1)$-component of $M\hat v$ would be nonzero, and thus we cannot have $M \hat v = \theta \hat v$. Thus, $\hat v$ is not an eigenvalue, and the inequality is strict.
\end{proof}

The following lemma is immediate from Lemmas \ref{subconst quot mat} through \ref{tri_interlace}, and will find use later.

\begin{lem} \label{kelhu}
With the reference to Lemma \ref{subconst quot mat},
let

    $$L_j = \begin{pmatrix}
0  & k       &  & &   &      & \\
1   & a_1 & b_1 & & &     & \\
  & c_2  &a_2 & b_2 & & & \\
   &    & \ddots& \ddots&\ddots& & \\
   &           & & &      & b_{j-1}\\
   &    &    &   & c_j &a_j
\end{pmatrix}$$
for $j < D$. If the smallest eigenvalue of this matrix is $\theta_{j}$, and the smallest eigenvalue of $\GG$ is $\theta_D$, then $\theta_j > \theta_D$. The same coclusion holds for the matrix $$R_j:=\begin{pmatrix}
-1  & b_1       &     &      & & \\
1   & k-b_1-c_2 & b_2 &      & & \\
& c_2 & k-b_2-c_3 &\ddots& & \\
&     & \ddots    &\ddots& & \\
&     &           &      & & b_{j}\\
&     &           & &c_{j}& k-b_{j}-c_{j+1}
\end{pmatrix}.$$
\end{lem}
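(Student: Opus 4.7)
The plan is to chain together Lemmas \ref{subconst quot mat}, \ref{tridiag}, and \ref{tri_interlace}. By Lemma \ref{subconst quot mat}, the distinct eigenvalues of $L$ coincide with those of $\Gamma$, so in particular the smallest eigenvalue of $L$ equals $\theta_D$. Since $L$ is not symmetric, Lemma \ref{tri_interlace} cannot be applied to it directly, so my first step would be to symmetrize $L$ via Lemma \ref{tridiag}. The super- and sub-diagonal entries of $L$, namely $k, b_1, \ldots, b_{D-1}$ and $1, c_2, \ldots, c_D$, are all strictly positive in a distance-regular graph of diameter $D$, so Lemma \ref{tridiag} produces a diagonal matrix $N$ such that $\tilde L := N^{-1} L N$ is symmetric tridiagonal, cospectral with $L$, and hence has smallest eigenvalue $\theta_D$.

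The key observation to make next is that, because $N$ is diagonal, conjugation by $N$ commutes with the operation of extracting a leading principal submatrix: if $N_j$ denotes the top-left $(j+1) \times (j+1)$ block of $N$, then $\tilde L_j := N_j^{-1} L_j N_j$ is exactly the top-left $(j+1) \times (j+1)$ block of $\tilde L$. Applying Lemma \ref{tridiag} to $L_j$ in its own right, $\tilde L_j$ is symmetric and cospectral with $L_j$, so its smallest eigenvalue equals $\theta_j$. The sub-diagonal entries of $\tilde L$, namely $\sqrt{k}, \sqrt{c_2 b_1}, \ldots, \sqrt{c_D b_{D-1}}$, are all nonzero, and $j+1 < D+1$ by the hypothesis $j < D$. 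Thus Lemma \ref{tri_interlace}, applied with $M = \tilde L$ and $M_{j+1} = \tilde L_j$, yields the desired strict inequality $\theta_j > \theta_D$.

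For the matrix $R$ the plan is essentially the same. Lemma \ref{subconst quot mat} identifies the smallest eigenvalue of $R$ with $\theta_D$; the super- and sub-diagonal entries of $R$ are again positive intersection numbers, so Lemma \ref{tridiag} produces a symmetric cospectral matrix $\tilde R$ of which the symmetrization of $R_j$ is a leading principal submatrix, and Lemma \ref{tri_interlace} then gives the conclusion. I do not anticipate any serious obstacle here, as the whole argument is a mechanical combination of the three preceding lemmas. The one point that requires a moment of thought is the commutation of the symmetrizing conjugation with the operation of taking a leading principal submatrix, which is what makes it legitimate to invoke Lemma \ref{tri_interlace} on the truncation; this is immediate from $N$ being diagonal, since diagonal matrices respect any block decomposition aligned with the indexing.
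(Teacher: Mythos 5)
Your proof is correct and is exactly what the paper has in mind: the paper states the lemma is ``immediate from Lemmas \ref{subconst quot mat} through \ref{tri_interlace}'' and does not spell out the chain, while you have filled in the details — in particular the key observation that conjugation by the diagonal matrix $N$ commutes with taking leading principal submatrices, so the symmetrization of $L_j$ (resp.\ $R_j$) really is a leading principal block of the symmetrization of $L$ (resp.\ $R$), which is what licenses the appeal to Lemma \ref{tri_interlace}.
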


We will now deduce some facts about distance-regular graphs from these lemmas. In \cite{-m} it was shown that, if a distance-regular graph has smallest eigenvalue at least $-m$, where $m$ is an integer and $m\geq 2$ , then $k<m(a_1+m)$. We improve this result as follows.

\begin{lem}\label{1}
	Let $\Gamma$ be a distance-regular graph with intersection array $\{k,b_1,\dots,b_{D-1};1,c_2,\dots,c_{D}\}$ and $D\geq3$. For an integer $m\geq 2$, if $\Gamma$ has smallest eigenvalue at least $-m$, then $$k<m(a_1+m)-(m-1)c_2.$$
\end{lem}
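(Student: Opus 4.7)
The plan is to improve the previous bound by applying Lemma~\ref{kelhu} to the leading $2\times 2$ principal submatrix of the tridiagonal matrix $R$ from Lemma~\ref{subconst quot mat}, namely
$$M = \begin{pmatrix} -1 & b_1 \\ 1 & k-b_1-c_2 \end{pmatrix}.$$
Since $D\geq 3$, the matrix $R$ has size at least three, so $M$ is a proper leading principal submatrix of $R$. After passing to the symmetrized form via Lemma~\ref{tridiag} (which preserves the spectrum), the sub-diagonal entry of $\tilde{R}$ just below the $M$-block, which is $\sqrt{c_2 b_2}$, is strictly positive because $c_2\geq 1$ and $b_2>0$ (as $D\geq 3$). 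By Lemma~\ref{tri_interlace}, the smallest eigenvalue of $M$ is therefore strictly greater than the smallest eigenvalue of $R$, which equals $\theta_D$. Combined with the hypothesis $\theta_D\geq -m$, this gives $\theta_{\min}(M) > -m$.

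Next I would read off $\theta_{\min}(M)$ explicitly. Using $k=a_1+b_1+1$, the characteristic polynomial of $M$ is
$$\lambda^2 - (a_1-c_2)\lambda + (c_2-k),$$
so $\theta_{\min}(M) = \tfrac{1}{2}\bigl((a_1-c_2) - \sqrt{(a_1-c_2)^2 + 4(k-c_2)}\bigr)$; the radicand is non-negative because property~(iii) in Section~\ref{prelim} (with $i=1$, $j=2$, using $D\geq 3$) gives $b_1\geq c_2$ and hence $k\geq c_2$. The inequality $\theta_{\min}(M)>-m$ rearranges to
$$(a_1-c_2)+2m > \sqrt{(a_1-c_2)^2 + 4(k-c_2)}.$$
The right-hand side is non-negative, so the left-hand side is positive and squaring is valid. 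Cancelling the $(a_1-c_2)^2$ terms and dividing by $4$ yields $m(a_1-c_2)+m^2 > k-c_2$, which is exactly $k < m(a_1+m) - (m-1)c_2$.

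The only mild obstacle is bookkeeping: one must ensure that $M$ is a \emph{proper} principal submatrix of $R$ so that Lemma~\ref{tri_interlace} gives a strict inequality (guaranteed by $D\geq 3$), and one must justify the squaring step (automatic from the eigenvalue inequality combined with $k\geq c_2$). The argument is essentially an upgrade of the elementary $2\times 2$ quotient-matrix argument that yields the weaker bound $k<m(a_1+m)$: by incorporating the intersection number $c_2$ into the matrix, the extra $(m-1)c_2$ improvement drops out of the same computation.
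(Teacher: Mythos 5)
Your proof is correct and follows essentially the same route as the paper's: both apply the strict interlacing of Lemma~\ref{tri_interlace} (via Lemma~\ref{kelhu}) to the leading $2\times 2$ principal submatrix of $R$ and rearrange. The only cosmetic difference is that the paper shifts to $R+mI$ and reads off positivity of the $2\times 2$ determinant, whereas you solve the quadratic explicitly and square the resulting inequality; the algebra is the same either way.
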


\begin{proof}

Referring to the matrix $R$ of Lemma \ref{subconst quot mat}, consider the matrix $\tilde M$ which is the symmetrization (see Lemma \ref{tridiag}) of $R + mI$. Note that the smallest eigenvalue of $\tilde M$ is nonnegative. Following Lemma \ref{tri_interlace}, if we form the matrix

	\begin{equation}
	\tilde M_2:=\begin{pmatrix}
	m-1  & \sqrt{b_1}        \\
	\sqrt{b_1}      & m+k-b_1-c_2
	\end{pmatrix}
	\end{equation}
then all eigenvalues of this matrix are strictly positive, and thus its determinant is strictly positive. Rearranging (using $k-b_1 = a_1 + 1$) yields the lemma.

\end{proof}

We remark that, since our main concern is when $m$ is $3$, this result takes the form $k < 3a_1 - 2c_2 + 9$. The following lemma will also be useful.

\begin{lem} \label{theta1diag}
	Let $\Gamma$ be a distance-regular graph with intersection array $\{k,b_1,\dots,b_{D-1};1,c_2,\dots,c_{D}\}$ and distinct eigenvalues $k=\theta_0 > \theta_1 > \cdots > \theta_D$. Then we have
	$$\theta_1 \geq \max\{-1, k-b_1-c_2, k-b_2-c_3, \ldots, k-b_{D-1}-c_D \}.$$
\end{lem}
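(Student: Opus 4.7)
The plan is to read the desired quantities as the diagonal entries of the tridiagonal matrix $R$ from Lemma \ref{subconst quot mat}, and then exploit the fact that the largest eigenvalue of a symmetric matrix dominates its diagonal. Concretely, the diagonal entries of $R$ are precisely $-1,\ k-b_1-c_2,\ k-b_2-c_3,\ \ldots,\ k-b_{D-1}-c_D$, and by Lemma \ref{subconst quot mat} the eigenvalues of $R$ are exactly the distinct eigenvalues of $\Gamma$ other than $k$, so in particular the largest eigenvalue of $R$ equals $\theta_1$.

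The only obstacle is that $R$ itself is not symmetric, so the Rayleigh-quotient bound "largest eigenvalue $\geq$ any diagonal entry" does not apply directly. To get around this, I would invoke Lemma \ref{tridiag} to symmetrize $R$ into $\tilde R = N^{-1} R N$, where $N$ is diagonal. Since conjugation by a diagonal matrix preserves the diagonal entries, $\tilde R$ has the same diagonal entries as $R$, and by Lemma \ref{tridiag} it also has the same spectrum as $R$. (To apply Lemma \ref{tridiag} we need the off-diagonal entries $b_i$ and $c_{i+1}$ to be positive, which holds for $i = 1, \ldots, D-1$ in any distance-regular graph.)

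Now the largest eigenvalue of the symmetric matrix $\tilde R$ is $\theta_1$, and by the Courant--Fischer (Rayleigh quotient) characterization,
\[
\theta_1 \;=\; \max_{v \neq 0} \frac{v^\top \tilde R v}{v^\top v} \;\geq\; e_i^\top \tilde R e_i \;=\; \tilde R_{ii}
\]
for each standard basis vector $e_i$. Taking the maximum over $i$ yields
\[
\theta_1 \;\geq\; \max\{-1,\ k-b_1-c_2,\ k-b_2-c_3,\ \ldots,\ k-b_{D-1}-c_D\},
\]
which is the desired inequality. No further computation should be needed.
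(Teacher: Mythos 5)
Your proof is correct and matches the paper's own argument almost exactly: both symmetrize $R$ via Lemma \ref{tridiag}, observe the diagonal is preserved, and bound $\theta_1$ below by each diagonal entry — you via the Rayleigh quotient directly, the paper by applying Theorem \ref{interlace} with $S$ a standard basis vector, which is the same thing in interlacing dress.
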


\begin{proof}
	We first symmetrize the matrix $R$ as in Lemma \ref{tridiag}, to obtain a symmetric matrix $\tilde R$ with the same diagonal elements. We will then apply Theorem \ref{interlace} with the matrix $S$ being a $(D \times 1)$-vector with a $1$ in the $r$-th spot and zero everywhere else. The resulting matrix $B=S^\top R S$ is simply a $(1 \times 1)$-matrix whose lone entry is the $r$-th element along the diagonal of $\tilde R$, and thus of $R$. Theorem \ref{interlace} tells us that $\theta_1$ is at least as large as the eigenvalue of $B$, which is simply the $r$-th diagonal element of $R$. Applying this for each $r$ yields the result.
\end{proof}

\section{The distance-regular graphs with $c_2=1$} \label{c_2=1}

As indicated in the introduction, the geometric distance-regular graph with smallest eigenvalue $\theta_{\min} = -3$, $c_2 \geq 2$ and diameter at least 3 have been classified. This was begun by Bang \cite{gdrg-3}, who started the classification of geometric distance-regular graphs with smallest eigenvalue $-3$ and $c_2 \geq 2$, and
the results of Bang and Koolen \cite{non-exist} and Gavrilyuk and Makhnev \cite{45} completed the classification.

In this section we focus on the $c_2=1$ case. We concentrate on a non-geometric distance-regular graph $\GG$ with smallest eigenvalue $\theta_{\min}$ where $-3 \leq \theta_{\min} < -2$, $c_2=1$, and diameter at least 3. Before stating any new results, we describe the situation more carefully.

Since $c_2=1$, the local graph $\Delta$ is a disjoint union of $(a_1+1)$-cliques, and the largest cliques in $\Gamma$ are therefore of size $a_1+2$. Furthermore, each edge is contained in a unique such $(a_1+2)$-clique. As $a_1+1$ divides $k$, the quantity $t=\frac{k}{a_1+1}$ is an integer, and if we had $\theta_{\min} = -t$ then $\Gamma$ would be geometric. Since we are excluding this case, we obtain information from this observation, as we now describe.

Let $\cal{C}$ denote the set of $(a_1+2)$-cliques in $\Gamma$. Let $M$ denote the {\it vertex-clique matrix}; this is the $|V(\Gamma)| \times |\mathcal{C}|$-matrix with $M_{i,j} = 1$ if vertex $i$ is in clique $C_j$ and 0 otherwise. Each pair of adjacent vertices lies in a unique clique in $\cal{C}$, and each vertex is in $t$ such cliques. We therefore have $A = MM^\top - tI$. Since $\Gamma$ is not geometric it cannot have $-t$ as an eigenvalue, and it follows from this that the $|V(\Gamma)| \times |V(\Gamma)|$-matrix $MM^\top$ is of full rank (no zero eigenvalue), and thus that the rank of $M$ is also $|V(\Gamma)|$. This can only happen if $|\mathcal{C}| \geq |V(\Gamma)|$. We have $(a_1+2)|\mathcal{C}|=t|V(\Gamma)|$, since both quantities represent the number of 1's in $M$, and therefore $a_1 \leq t-2$. We isolate this as a lemma.

\begin{lem} \label{a1 c1=1 bound}
    Let $\Gamma$ be a non-geometric distance-regular graph with diameter $D\geq 3$ and $c_2 = 1$. Then $a_1 \leq t-2$, where $t = \frac{k}{a_1 + 1}$.
\end{lem}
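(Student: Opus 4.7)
The plan is to formalize the argument sketched in the paragraph immediately preceding the statement, which essentially proves the lemma via a rank/double-counting argument using the vertex--clique incidence matrix.

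First, I would observe that since $c_2 = 1$, the local graph $\Delta(x)$ at any vertex $x$ contains no induced path of length two, so it is a disjoint union of cliques; by distance-regularity each component has size $a_1+1$, which means every edge of $\Gamma$ is contained in a unique maximal clique of size $a_1+2$. Let $\mathcal{C}$ denote the set of these $(a_1+2)$-cliques and let $M$ be the $|V(\Gamma)|\times|\mathcal{C}|$ vertex--clique incidence matrix. Each vertex lies in exactly $t=k/(a_1+1)$ members of $\mathcal{C}$ (note $t$ is an integer because regularity forces $(a_1+1)\mid k$), so $MM^\top$ has $t$ on the diagonal, and the off-diagonal entry $(MM^\top)_{x,y}$ counts the number of cliques in $\mathcal{C}$ containing both $x$ and $y$, which is $1$ if $x\sim y$ and $0$ otherwise. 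Hence
\[
A(\Gamma) = MM^\top - tI.
\]

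Next I would exploit the non-geometric hypothesis. If $MM^\top$ had a nontrivial kernel then $-t$ would be an eigenvalue of $A(\Gamma)$; but by the Delsarte bound \eqref{dc-bd}, cliques of size $a_1+2 = 1+k/t$ can meet the bound only when $\theta_{\min} = -t$, and combined with the fact that $\mathcal{C}$ already consists of edge-disjoint cliques covering every edge, this would make $\Gamma$ geometric with respect to $\mathcal{C}$. Since $\Gamma$ is non-geometric, $-t$ is not an eigenvalue, so $MM^\top$ is nonsingular. Therefore $M$ has full row rank $|V(\Gamma)|$, which forces
\[
|\mathcal{C}| \;\geq\; |V(\Gamma)|.
\]

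Finally I would finish by double-counting the $1$'s of $M$: each row has $t$ ones and each column has $a_1+2$ ones, so $(a_1+2)|\mathcal{C}| = t|V(\Gamma)|$. Combining with $|\mathcal{C}|\geq |V(\Gamma)|$ gives $(a_1+2)|V(\Gamma)| \leq t|V(\Gamma)|$, i.e.\ $a_1 \leq t-2$, which is the claim.

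The only genuinely non-routine point is justifying why the absence of $-t$ from the spectrum of $\Gamma$ really follows from $\Gamma$ being non-geometric (as opposed to the converse direction, which is the definition). The cleanest way is the implication above: the cliques in $\mathcal{C}$ are automatically Delsarte whenever $-t$ is an eigenvalue (by equality in the Delsarte bound), and $\mathcal{C}$ already partitions the edge set, so having $-t$ as an eigenvalue is equivalent to $\Gamma$ being geometric in this setting. Everything else is a straightforward rank computation and a double count.
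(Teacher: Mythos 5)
Your proof is correct and follows essentially the same approach as the paper: the identity $A = MM^\top - tI$ via the vertex--clique incidence matrix, the rank argument showing $|\mathcal{C}| \geq |V(\Gamma)|$ from the non-geometric hypothesis, and the double count $(a_1+2)|\mathcal{C}| = t|V(\Gamma)|$. Your added justification for why $-t$ being an eigenvalue would force $\Gamma$ to be geometric (positive semidefiniteness of $MM^\top$ forces $\theta_{\min} = -t$, so the $(a_1+2)$-cliques are Delsarte and already partition the edges) fills in a detail the paper leaves implicit, but it is the same argument.
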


The case $t=2$ (more generally, all distance-regular graphs with $k \leq 2a_1 +2$ and $D \geq 3$) has also been classified, in \cite[Theorem 16]{koo-park}.  In all cases with $t \leq 2$ we have $\theta_{\min}\geq -2$ (in fact, the case $t=2$ can only occur for line graphs by \cite[Proposition 4.3.4]{BCN}). We will therefore assume $\theta_{\min}<-2$, which forces $t \geq 3$. The following lemma characterises the possibilities which may occur.

\begin{lem}\label{5.1}
	Let $\Gamma$ be a distance-regular graph with
 smallest eigenvalue $-3\leq\theta_{\min}<-2$, $D \geq 3$, and $c_2=1$. Then $t=\frac{k}{a_1+1} \geq 3$, and the pair $$(k,a_1)\in \{(3,0),(4,0),(5,0),(6,0),(6,1),(8,1),(12,2)\}.$$
\end{lem}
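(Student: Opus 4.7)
The plan is to combine three known facts (already established in the excerpt) to pin down a small finite list of admissible pairs $(k,a_1)$, and then just enumerate.

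First, I would recall the lower bound $t \geq 3$. This is established in the paragraph preceding the lemma: the case $t=1$ gives a complete graph (diameter $1$), while $t=2$ forces $\Gamma$ to be a line graph of valency $\leq 2a_1+2$ covered by \cite[Proposition 4.3.4]{BCN} and \cite[Theorem 16]{koo-park}, hence has $\theta_{\min}\geq -2$, contradicting our hypothesis $\theta_{\min}<-2$.

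Next, I would apply the upper bound from Lemma \ref{1} with $m=3$. Since $c_2=1$, the lemma gives
$$k < 3(a_1+3) - 2\cdot 1 = 3a_1+7.$$
Writing $k=t(a_1+1)$ and dividing by $a_1+1$,
$$t < 3 + \frac{4}{a_1+1}.$$
Combined with Lemma \ref{a1 c1=1 bound}, which yields $a_1 \leq t-2$, we now have simultaneous constraints $t\geq 3$, $t\geq a_1+2$, and $t(a_1+1)<3a_1+7$.

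Finally, I would enumerate on $a_1$. For $a_1=0$, the constraint is $3\leq t\leq 6$, producing $k\in\{3,4,5,6\}$. For $a_1=1$, we need $3\leq t\leq 4$, giving $k\in\{6,8\}$. For $a_1=2$, we need $t\geq 4$ from Lemma \ref{a1 c1=1 bound} and $t<3+4/3$, forcing $t=4$ and $k=12$. For $a_1=3$, we need $t=3$, but then $a_1\leq t-2=1$, contradiction. For $a_1\geq 4$, we have $t<3+4/5$, so $t\leq 3$, which again contradicts $a_1\leq t-2$. This gives exactly the listed seven pairs.

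The argument is essentially bookkeeping once Lemmas \ref{1} and \ref{a1 c1=1 bound} are in hand, so there is no real obstacle here; the only thing to be careful about is the strict inequality in Lemma \ref{1}, which is what rules out $t=7$ when $a_1=0$ and $t=5$ when $a_1=1$, and to remember that the $t=3$ boundary has to be checked against $a_1\leq t-2=1$ so that large $a_1$ with $t=3$ is excluded.
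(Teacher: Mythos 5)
Your proof is correct and uses the same two inputs as the paper (Lemma \ref{1} with $m=3$ and Lemma \ref{a1 c1=1 bound}); the only difference is purely organizational, in that you enumerate over $a_1$ whereas the paper enumerates over $t$, applying Lemma \ref{a1 c1=1 bound} for $t\in\{3,4\}$ and Lemma \ref{1} for $t\geq 5$. Both routes produce the same seven pairs, and your handling of the strict inequality from Lemma \ref{1} is careful and matches the paper's bound $k\leq 3a_1+6$.
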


\begin{proof}

Suppose first that $t=3$. By Lemma \ref{a1 c1=1 bound}, $a_1 \leq 1$. This then gives us two possibilities, $(k,a_1)\in \{(3,0),(6,1)\}$. Similarly, $t=4$ implies $a_1 \leq 2$ by Lemma \ref{a1 c1=1 bound}, and we have the possibilities $(k,a_1)\in \{(4,0),(8,1), (12,2)\}$.

Now assume $t\geq 5$. Even though the bound on $a_1$ given by Lemma \ref{a1 c1=1 bound} is growing weaker, we obtain a stronger bound now by Lemma \ref{1}, which shows $k\leq 3a_1-2c_2+8=3a_1+6$. We must therefore have $3a_1+6 \geq ta_1 + t$, and this can only occur with $t =5,6$ and $a_1 = 0$. This yields the remaining two cases, $(k,a_1)\in \{(5,0),(6,0)\}$.
\end{proof}

We will now examine each of the cases given in Lemma \ref{5.1}. Some of these cases have already been classified in other papers, while others will require a more detailed analysis. Recall that we are searching for non-geometric distance-regular graphs with $c_2 = 1$, $D \geq 3$, and $\theta_{\min} \geq -3$. We first assume $D=3$.

From Lemma \ref{5.1}, by checking the feasible intersection arrays in \cite[Chapter 14]{BCN}, we obtain the following corollary.

\begin{cor}\label{cor:c2=1;D=3}
	Let $\Gamma$ be a non-geometric distance-regular graph with $-3\leq \theta_{\min}<-2$, $c_2=1$ and diameter $3$. Then $\Gamma$ is one of the following:
\begin{enumerate}[(a)]
	\item the Odd graph $O_4$ with $\iota(\Gamma)=\{4,3,3;1,1,2\}$;
	\item the Sylvester graph with $\iota(\Gamma)=\{5,4,2;1,1,4\}$;
	\item the second subconstituent of the Hoffman-Singleton graph with $\iota(\Gamma)=\{6,5,1;1,1,6\}$;
	\item the Perkel graph with $\iota(\Gamma)=\{6,5,2;1,1,3\}$;
	\item the symplectic $7$-cover of $K_9$ with $\iota(\Gamma)=\{8,6,1;1,1,8\}$.
\end{enumerate}
\end{cor}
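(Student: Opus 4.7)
The plan is simple: apply Lemma \ref{5.1} to reduce to one of seven pairs $(k,a_1)$, then for each pair enumerate all feasible diameter-$3$ intersection arrays $\{k,\,k-a_1-1,\,b_2;\,1,1,c_3\}$ and match them against the tables of feasible parameter sets in \cite[Chapter 14]{BCN}.

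For each $(k,a_1)\in\{(3,0),(4,0),(5,0),(6,0),(6,1),(8,1),(12,2)\}$, the value $b_1=k-a_1-1$ is determined, leaving only $(b_2,c_3)$ free. Standard feasibility conditions --- $b_2\le b_1$, $c_3\ge 1$, integrality of the subconstituent sizes $k_2=kb_1$ and $k_3=kb_1b_2/c_3$, integrality and nonnegativity of the eigenvalue multiplicities, nonnegativity of the Krein parameters, and the requirement $\theta_{\min}\ge -3$ --- leave only finitely many candidates in each case. Because $k\le 12$, the vertex count $v=1+k+k_2+k_3$ lies far below the $1024$-vertex primitive tabulation bound, so every primitive candidate can be read off from \cite[Chapter 14]{BCN}; the diameter-$3$ antipodal (non-bipartite) candidates are then handled via the separately tabulated antipodal covers of small primitive graphs.

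Bipartite candidates are ruled out immediately, since a bipartite regular graph of valency $k$ has $-k$ as an eigenvalue, forcing $k\le 3$; the resulting $k=3$ case yields only the Heawood graph, which is geometric because its edges realise the Delsarte bound. Geometric candidates at the remaining pairs are flagged by computing $\theta_{\min}$ directly from the intersection array and checking whether it equals $-k/(a_1+1)$, and are removed. What survives is exactly the claimed list: $(3,0),(6,1),(12,2)$ contribute no array in the required range; $(6,0)$ contributes the second subconstituent of the Hoffman--Singleton graph and the Perkel graph; and $(4,0),(5,0),(8,1)$ contribute $O_4$, the Sylvester graph, and the symplectic $7$-cover of $K_9$, respectively. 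The main obstacle is organizational --- ensuring that both the primitive and antipodal branches of the tables are covered without omission --- rather than technical.
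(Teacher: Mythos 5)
Your overall strategy---reduce via Lemma \ref{5.1} to the seven $(k,a_1)$ pairs and then match feasible diameter-$3$ arrays $\{k,b_1,b_2;1,1,c_3\}$ against the tables in \cite[Chapter 14]{BCN}---is essentially the paper's approach. But the justification that the tables suffice has a genuine gap. You assert that because $k\le 12$ the vertex count $v=1+k+k_2+k_3$ lies ``far below'' the $1024$-vertex primitive cutoff. That is false for $(k,a_1)=(12,2)$: there $b_1=9$, $k_2=108$, and nothing you have stated prevents $b_2$ from being as large as $9$ with $c_3=1$, giving $k_3=972$ and $v=1093>1024$. The paper closes this exact hole by applying the interlacing Lemma \ref{kelhu} to the $3\times 3$ leading tridiagonal block, showing $a_2\ge 7$ (hence $b_2\le 4$) is forced by $\theta_{\min}\ge -3$, and only then does $v$ drop below $1024$. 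Without an argument of that kind you cannot conclude that every surviving primitive candidate appears in the tables, so the enumeration is not closed.

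Two smaller points. First, your treatment of the antipodal branch as ``read off from separately tabulated antipodal covers'' underestimates the work needed: for $c_2=1$ the antipodal diameter-$3$ arrays are $\{k,b_1,1;1,1,k\}$, and the paper disposes of them pair by pair---the cubic classification for $(3,2)$, the Hiraki--Nomura--Suzuki classification for $(6,4)$, and for the remaining pairs other than $(8,6)$ the observation that $a_1\ne c_2$ forces integral eigenvalues, hence $\theta_{\min}=-3$ and $2k-3b_1+3=0$, leaving only $(6,5)$ and $(12,9)$, the latter failing an integrality-of-multiplicity test. Second, at least one primitive candidate your search would return, $\{5,4,3;1,1,2\}$, is not eliminated by the \cite{BCN} tables themselves; you need the later non-existence result of Fon-Der-Flaass. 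Your Delsarte-bound test for geometricity (checking whether $\theta_{\min}=-k/(a_1+1)$) is the right criterion in the $c_2=1$ setting and matches the paper's reasoning.
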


\begin{proof}
Since $b_1<k$, we have $k_2\leq k^2-k$ and $k_3< k^3-k^2$. So the total number of vertices $v$ is less than $k^3+1$. When $k=3,4,5,6,8$, we have $v<1024$. To obtain this same bound for $k=12$ is a bit more work, but can obtain it by applying Lemma \ref{kelhu} to the matrix

 $$\begin{pmatrix}
0  & 12       & 0  \\
1   & 2 & 9 \\
0 & 1  & a_2
\end{pmatrix}.$$

To simplify matters in this and a number of future cases, we employed a computer algebra software to handle such calculations; Mathematica notebooks containing the relevant calculations are available with the arxiv version of this paper.

The software tells us that this matrix has minimum eigenvalue less than or equal to $-3$ whenever $a_2 \leq 6$, and thus we can assume $a_2 \geq 7$ and $b_2 \leq 4$. Now
$v=1+k+k_2+k_3 \leq 1+k+kb_1+k{b_1b_2}<1024$ (as $c_3\geq c_2=1$). Therefore, in all cases, we can simply use the tables in \cite{BCN}, together with the characterization of antipodal non-bipartite graphs of diameter 3 given in the same section.

First, we look for primitive distance-regular graphs in the table of feasible intersection arrays in \cite[Chapter 14]{BCN}. We find that $\Gamma$ can be the Odd graph $O_4$, the Sylvester graph, the Perkel graph or the putative distance-regular graph with $\iota(\Gamma)=\{5,4,3;1,1,2\}$. This last one has been eliminated by Fon-Der-Flaass in \cite{Flaass1993}.
	
Next we consider the imprimitive distance-regular graph, and as mentioned before we only need to discuss the antipodal covers of distance-regular graphs with intersection array $\{k,b_1,1;1,1,k\}$. By Lemma \ref{5.1}, we know that $(k,b_1)\in\{(3,2),(4,3),(5,4),(6,5),(6,4),(8,6),$ $(12,9)\}$. We can immediately dispense with $(k,b_1) = (3,2)$ using the known classification of cubic distance-regular graphs (see \cite{k=3} or \cite[Theorem 7.5.1]{BCN}): there are none with intersection array $\{3,2,1;1,1,3\}$. The case $(k,b_1) = (6,4)$ can also be handled with known classification results, since here $a_1 = 1$ and thus any such graphs would have to be contained in the list in \cite{k6}, and since all graphs there are geometric, there are none meeting our requirements. For the others, when $(k,b_1)\neq (8,6)$, we find $a_1\neq c_2$, in this case the eigenvalues are all integers. Then we have $\theta_{\min}=-3$. Since $\theta_{\min}$ is the solution of the equation $x^2+(c_2-a_1)x-k=0$, it follows that $2k-3b_1+3=0$. So $(k,b_1)$ only can be $(6,5)$ or $(12,9)$. The first one is the second subconstituent of the Hoffman-Singleton graph with $\iota(\Gamma)=\{6,5,1;1,1,6\}$, and the second one is eliminated because the multiplicity of $-3$
is not integral by \cite[page 431]{BCN}. When $(k,b_1)= (8,6)$, we find $\Gamma$ is the Symplectic $7$-cover of $K_9$ with $\iota(\Gamma)=\{8,6,1;1,1,8\}$. This proves the corollary.
\end{proof}

Next, we discuss the $D\geq 4$ case.

\begin{cor}\label{cor:c2=1;bigD}
	Let $\Gamma$ be a non-geometric distance-regular graph with $-3\leq \theta_{\min}<-2$, $c_2=1$ and diameter $D\geq 4$. Then $\Gamma$ is one of the following:
\begin{enumerate}[(a)]
	\item the Coxeter graph with $\iota(\Gamma)=\{3,2,2,1;1,1,1,2\}$;
	\item the dodecahedron with $\iota(\Gamma)=\{3,2,1,1,1;1,1,1,2,3\}$;
	\item the Biggs-Smith graph with $\iota(\Gamma)=\{3,2,2,2,1,1,1;1,1,1,1,1,1,3\}$;
	\item the Wells graph with $\iota(\Gamma)=\{5,4,1,1;1,1,4,5\}$.
\end{enumerate}
\end{cor}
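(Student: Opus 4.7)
The plan is to follow the same template used for Corollary \ref{cor:c2=1;D=3}: invoke Lemma \ref{5.1} to restrict $(k,a_1)$ to the seven pairs $(3,0),(4,0),(5,0),(6,0),(6,1),(8,1),(12,2)$, reduce each case to a finite list of feasible intersection arrays small enough to consult the tables in \cite[Chapter 14]{BCN}, and then discard the geometric, bipartite, and non-existent candidates. The added constraint $D\geq 4$ actually simplifies the bookkeeping, because bipartite distance-regular graphs of valency $k\geq 4$ have $\theta_{\min}=-k\leq -4$ and are thus automatically excluded, so we never have to worry about the antipodal-cover carve-out that featured in the $D=3$ case.

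First I would settle $k\in\{3,4\}$ by direct appeal to the complete classifications. For $k=3$ \cite[Theorem 7.5.1]{BCN} lists every cubic distance-regular graph, and scanning the list for those with $c_2=1$, $D\geq 4$, non-geometric, and $-3\leq \theta_{\min}<-2$ picks out precisely the Coxeter graph, the dodecahedron, and the Biggs-Smith graph. For $k=4$ the classification of \cite{brouwer1999distance} is equally explicit, and a similar scan yields no candidates with $D\geq 4$ meeting our hypotheses (the relevant $k=4$ entries are either geometric, have $\theta_{\min}<-3$, or have $c_2\geq 2$).

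For the remaining pairs $(k,a_1)\in\{(5,0),(6,0),(6,1),(8,1),(12,2)\}$ the strategy is to force $v\leq 4096$ and then read off survivors from the tables. The crude bound $k_i\leq k_{i-1}b_{i-1}$ together with $b_i\leq k-c_2-a_1\leq k-1$ is already enough when $D=4$ for all five pairs; to handle $D\geq 5$ I would invoke Lemma \ref{kelhu} applied to the principal submatrices $L_j$ (respectively $R_j$) built from the intersection array. Any candidate array with a small value of $a_j$ at some intermediate position forces the corresponding submatrix to have smallest eigenvalue strictly below $-3$, contradicting $\theta_{\min}\geq -3$; this in turn drives up $a_j$ and drives down $b_j$, preventing $k_i$ from growing too rapidly and yielding $v\leq 4096$. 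Once this bound is secured, \cite[Chapter 14]{BCN} lists every feasible array, and case-checking leaves the Wells graph with $\iota=\{5,4,1,1;1,1,4,5\}$ as the unique survivor.

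The main obstacle I anticipate is the case $(k,a_1)=(12,2)$, where the valency is large enough that naive bounds on $k_i$ blow past $4096$ already by $D=4$. Here Lemma \ref{kelhu} must be applied to several nested submatrices, and the resulting polynomial inequalities on the unknown $a_i, b_i, c_i$ need to be solved carefully; as in the $D=3$ proof I would delegate these computations to Mathematica. A lesser annoyance is confirming that each potentially surviving array from the tables is either already known to be realized by the Wells graph, known to not satisfy the non-geometric/$\theta_{\min}$ hypotheses, or nonexistent by a published non-existence result; for arrays with non-integer multiplicities the check is immediate, but the borderline arrays may require ad hoc exclusion.
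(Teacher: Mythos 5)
Your proposal has the right skeleton: start from Lemma~\ref{5.1}, settle $k=3$ and $k=4$ (and $(k,a_1)=(6,1)$) by quoting the complete classifications in \cite{k=3}, \cite{brouwer1999distance} and \cite{k6}, and for the remaining pairs force a vertex bound and consult the tables. That is exactly the architecture of the paper's proof, and your observation that bipartite graphs drop out automatically once $D\geq 4$ is also correct.

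However, the claim that Lemma~\ref{kelhu} alone forces $v\leq 4096$ is where the argument has a real gap. For $(k,a_1)=(5,0)$ and $(6,0)$ the eigenvalue interlacing does \emph{not} eliminate all problematic partial arrays: the paper's computer search finds survivors such as $\{5,4,3,3,\dots;1,1,1,1,\dots\}$ whose principal submatrices have smallest eigenvalue $>-3$, and these cannot be removed by more interlacing. The paper kills them with a completely separate non-existence theorem, \cite[Corollary 4.7]{hiraki1996distance}, which forbids a distance-regular graph with $a_1=0$, $b_2\neq b_1$ and $c_4=1$; your proposal has no substitute for this. Similarly, the parameter ranges fed into the eigenvalue computation are kept finite only because of \cite[Corollary 5.9.7]{BCN} (if $c_{i+1}\geq b_i$ then $D\leq 3i$), which you do not invoke. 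And for $(8,1)$ and $(12,2)$ the paper needs \cite[Theorem 4.3.11]{BCN} and Theorem~1.1 of \cite{bang2006improving} to propagate lower bounds $c_3\geq 2$, $c_5\geq 3$, $c_9\geq 4$; without these the ``crude bound'' $k_i\leq k_{i-1}b_{i-1}$ overshoots $4096$ already at $D=4$ for $(12,2)$ (one gets $v$ up to roughly $9800$). So your proposal identifies the correct destination but omits the three auxiliary non-existence and diameter-bounding results that make the vertex count actually land below the table threshold; as written, the case $(5,0)$, $(6,0)$, $(8,1)$, $(12,2)$ arguments would not close.
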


\begin{proof}

We will proceed case by case.

\gskip

{\it Case $(3,0)$}: The cubic distance-regular graphs have been classified, in \cite{k=3}. The ones which meet our conditions are the Coxeter graph, the dodecahedron, and the Biggs-Smith graph.

\gskip

{\it Case $(4,0)$}: The distance-regular graphs of valency 4 have been classified, in \cite{brouwer1999distance}. None of them meet our conditions.

\gskip

{\it Case $(5,0)$}: \if2 This and some subsequent cases require the use of computer algebra software such as Mathematica; notebooks containing the relevant calculations are available at

\gskip

https://users.monash.edu.au/gmarkow/.

\gskip
\fi
It is easy to see that if $D = 4$ then the number of vertices is less than 1000, and we can use the tables. So we need only consider $D$ large. We had the computer algebra software calculate the smallest eigenvalue of the matrix

$$\begin{pmatrix}
0  & 5       & 0 & 0 & 0 \\
1   & 0 & 4 & 0 & 0\\
0 & 1  & 4-b_2 & b_2 & 0 \\
0 & 0 & c_3  & 5-c_3-b_3 & b_3 \\
0 & 0 & 0  & c_4 & a_4
\end{pmatrix},$$
where $b_2$ ranged from 1 to 4, $b_3$ ranged from 1 to $b_2$ (because the $b_i$'s are decreasing), $c_4$ ranged from 1 to $b_3-1$, and $c_3$ ranged from 1 to $c_4$ (because the $c_i$'s are increasing). The requirement $c_4 \leq b_3 -1$ is due to \cite[Corollary 5.9.7]{BCN}, which states that if $c_{i+1} \geq b_i$ then $D \leq 3i$; in our case, if $c_4 \geq b_3$ then $D \leq 9$, which means that the number of vertices is less than $1 + 5 + 20 + 80 + 6\times 80 = 586$.
With these requirements, we found only the following partial intersection arrays
\begin{equation}
    \begin{split}
        \{5,4,3,3, \ldots;1,1,1,1, \ldots\}, \\
        \{5,4,3,2, \ldots;1,1,1,1, \ldots\}, \\
        \{5,4,2,2, \ldots;1,1,1,1, \ldots\}.
    \end{split}
\end{equation}

However, all three of these parameter sets satisfy $a_1=0, b_2 \neq b_1$, and $c_4=1$. But \cite[Corollary 4.7]{hiraki1996distance} asserts that no such distance-regular graphs can exist. Thus the only possible graphs in this case can be found in the tables.

Examining the tables gives only the Wells graph and a putative distance-regular graph with $\iota(\Gamma)=\{5,4,3,3; 1, 1, 1,2\}$, however this latter graph has been shown not to exist in \cite{fon1993distance}.

\gskip

{\it Case $(6,0)$}: Similar to the previous case, we had the computer algebra system calculate the smallest eigenvalue of the matrix

$$\begin{pmatrix}
0  & 6       & 0 & 0 & 0 \\
1   & 0 & 5 & 0 & 0\\
0 & 1  & 5-b_2 & b_2 & 0 \\
0 & 0 & c_3  & 6-c_3-b_3 & b_3 \\
0 & 0 & 0  & c_4 & a_4
\end{pmatrix},$$
where $b_2$ ranged from 1 to 5, $b_3$ ranged from 1 to $b_2$ (because the $b_i$'s are decreasing), $c_4$ ranged from 1 to $b_3-1$, and $c_3$ ranged from 1 to $c_4$ (because the $c_i$'s are increasing). The requirement $c_4 \leq b_3 -1$ is for the same reason as the previous case. With these requirements, we found only the following partial intersection array
\begin{equation}
        \{6,5,2,2, \ldots;1,1,1,1, \ldots\}.
\end{equation}

As in the previous case, we have $a_1=0, b_2 \neq b_1$, and $c_4=1$; thus no distance-regular graph can have this intersection array. Thus, again, the only possible graphs in this case can be found in the tables. However, no graphs satisfying our requirements can be found there.

\gskip

{\it Case $(6,1)$}: Fortunately for us, all distance-regular graphs with $(k, a_1) = (6, 1)$ have been classified by Hiraki, Nomura and Suzuki \cite{k6},
and it turns out that all the graphs there are geometric. Therefore there are none that meet our requirements.

\gskip

{\it Case $(8,1)$}: We begin by applying Lemma \ref{kelhu} to the matrix

 $$\begin{pmatrix}
0  & 8       & 0  \\
1   & 1 & 6 \\
0 & 1  & a_2
\end{pmatrix}.$$
The computer algebra software told us that this matrix has minimum eigenvalue less than or equal to $-3$ whenever $a_2 \leq 1$, and thus we can assume $a_2 \geq 2$. We know then that $c_3 \geq 2$ from \cite[Theorem 4.3.11]{BCN}, and thus $c_4 \geq 2$ as well. We then had the computer algebra software calculate the smallest eigenvalue of the matrix

$$\begin{pmatrix}
0  & 8       & 0 & 0 & 0 \\
1   & 1 & 6 & 0 & 0\\
0 & 1  & 7-b_2 & b_2 & 0 \\
0 & 0 & c_3  & 8-c_3-b_3 & b_3 \\
0 & 0 & 0  & c_4 & a_4
\end{pmatrix},$$
where $b_2$ ranged from 1 to 5 (because $a_2 \geq 2$), $b_3$ ranged from 1 to $b_2$ (because the $b_i$'s are decreasing), $c_4$ ranged from 2 to $b_3-1$ (for the same reason as the previous two cases), and $c_3$ ranged from 2 to $c_4$ (because the $c_i$'s are increasing). With these requirements, we found only the following partial intersection array
\begin{equation}
        \{8,6,3,3, \ldots;1,1,2,2, \ldots\}.
\end{equation}

However, in this case, by Theorem 1.1 of \cite{bang2006improving}, $c_5 > c_3 = 2$, so $c_4 \geq b_4$. By \cite[Corollary 5.9.7]{BCN}, the diameter $D \leq 12$, and thus the number of vertices is bounded above by $1+8+48+72 + 108 + 8\times 108 < 4096$. Thus the only possible graphs in this case can be found in the tables in \cite{BCN}. However, no graphs with $(k,a_1)=(8,1)$ can be found there.

\gskip

{\it Case $(12,2)$}:
There does not exist a distance-regular graph with $(k,a_1,c_2) = (12,2,1)$ and $\theta_{\min} \geq -3$, as we now show.

 We know that $c_3 \geq 2$ from \cite[Theorem 4.3.11]{BCN}. Furthermore, by Theorem 1.1 of \cite{bang2006improving} this implies that $c_5 > c_3$, so $c_5 \geq 3$, and the same theorem implies that $c_9 > c_5$, so $c_9 \geq 4$.

Let us suppose that $b_4 \leq 3$. Then, as $c_9 > b_4$, we must have that the diameter is at most 12 (see Proposition 2.4 of \cite{drgsurvey}). Using $k_i = \frac{b_0 b_1 \cdots b_{i-1}}{c_1 c_2 \cdots  c_{i}}$, we can bound $k_1=12, k_2 = 108, k_3 \leq 216, k_4 \leq 432$, and then $k_j \leq 432$ for $j=5, \ldots, 8$ since $c_5 \geq b_4$, the $c_i$'s are increasing, and the $b_i$'s are decreasing. Furthermore, for $j=9, \ldots , D$ we have $\frac{b_{j-1}}{c_j} \leq 3/4$, and thus $k_9 \leq 324$, $k_{10} \leq 243$, $k_{11} < 183$, and $k_{12} < 138$. Thus, the number of vertices of the graph is at most $1 + 12 + 108 + 216 + 5 \times 432 + 324 + 243 + 183 + 138 < 4096$, and we are done. So we may assume $b_2 = b_3 = b_4 = 4$.

Suppose now that $b_5 < c_5$. Then by Proposition 2.4 of \cite{drgsurvey} the diameter of the graph is at most 9, and we can bound as before $k_1=12, k_2 = 108, k_3 \leq 216, k_4 \leq 432, k_5 \leq 576$, and then $k_5 > k_j$ for $j = 6, 7, 8 ,9$ since $c_j > b_{j-1}$ for $j \geq 6$. We see that the number of vertices of the graph is at most $1 + 12 + 108 + 216 + 432 + 5 \times 576 < 4096$, and we are done. Thus we can assume $b_5 \geq c_5$.

We now turn again to the computer algebra software. We will apply Lemma \ref{kelhu} once more, this time to the matrix

$$\begin{pmatrix}
0  & 12 & 0 & 0 & 0 & 0  \\
1   & 2 & 9 & 0 & 0 & 0 \\
0 & 1  & 7 & 4 & 0 & 0 \\
0 & 0 & c_3 & 8 - c_3 & 4 & 0\\
0 & 0 & 0 & c_4 & 8-c_4 & 4 \\
0 & 0 & 0 & 0 & c_5 & a_5
\end{pmatrix}.$$

It is now a matter of calculating the smallest eigenvalue for each matrix of this type, as $c_3$ varies from 2 to 4, $c_4$ varies from $c_3$ to 4, $c_5$ varies from 3 to 4, and $a_5$ is equal either $8-c_5$ or $9-c_5$ (since $3 \leq c_5 \leq b_5 \leq b_4 = 4$). \if2 Any number of different softwares can perform this calculation, but the reader with access to Mathematica may copy and paste the following line:

\begin{verbatim}
    Do[Print[N@
   Min[Eigenvalues[{{0, 12, 0, 0, 0, 0}, {1, 2, 9, 0, 0, 0}, {0, 1, 7,
        4, 0, 0}, {0, 0, c3, 8 - c3, 4, 0}, {0, 0, 0, c4, 8 - c4,
       4}, {0, 0, 0, 0, c5, a5}}]]] , {c3, 2, 4}, {c4, c3, 4}, {c5, 3,
   4}, {a5, 8 - c5, 9 - c5}]
\end{verbatim}
\fi
The computer algebra system now tells us that every single one of these matrices has smallest eigenvalue strictly smaller than $-3$. Thus, again, the only possible graphs in this case can be found in the tables. However, no graphs satisfying our requirements can be found there.
\end{proof}

\section{Taylor graphs}\label{sec:Taylor}

A distance-regular graph with intersection array $\{k,c_2,1;1,c_2,k\}$ is called  \emph{Taylor graph}.
In this section we determine the Taylor graphs with smallest eigenvalue  at least $-3$.

\begin{prop}\label{prop:Taylor}
Let $\Gamma$ be a non-geometric Taylor graph with smallest eigenvalue $\theta_{\min}$. If  $-3 \leq \theta_{\min} < -2$,
then $\Gamma$ is the icosahedron, the halved 6-cube or the Gosset graph.
\end{prop}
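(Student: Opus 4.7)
The plan is to exploit the fact that a Taylor graph has exactly four distinct eigenvalues, two of which are $k$ and $-1$, with the remaining two being the roots of an explicit quadratic. Since $a_1 = k-c_2-1$ in any Taylor graph, the other two eigenvalues $\theta_1 > \theta_3$ are the roots of $x^2 - (a_1-c_2)x - k = 0$; in particular
\[
\theta_{\min} = \theta_3 = \tfrac{1}{2}\bigl((a_1-c_2) - \sqrt{(a_1-c_2)^2 + 4k}\bigr).
\]
Combining $-3 \le \theta_{\min} < -2$ with $k = a_1+c_2+1$ and squaring yields $\tfrac{a_1+3}{3} < c_2 \le \tfrac{a_1+4}{2}$, which sharply restricts the candidate parameters.

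The proof then splits on whether $\theta_3$ is an integer. If $\theta_3 \notin \mathbb{Z}$, then $\theta_1$ and $\theta_3$ are Galois conjugate algebraic integers, so $m_1 = m_3$; the trace identity $m_1\theta_1 + m_3\theta_3 = 0$ then forces $\theta_1 + \theta_3 = 0$, so $a_1 = c_2$ and $\theta_{\min} = -\sqrt{k}$. The constraints $4 < k \le 9$, $k$ not a square, and $k$ odd (so that $m_1 = (k+1)/2 \in \mathbb{Z}$) leave $k \in \{5,7\}$. The case $k=5$ is the icosahedron $\{5,2,1;1,2,5\}$, while the putative array $\{7,3,1;1,3,7\}$ is ruled out by checking the feasibility tables in \cite[Chapter~14]{BCN} (equivalently, by the non-existence of a regular two-graph on $8$ points with the required parameters).

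If $\theta_3 \in \mathbb{Z}$, then $\theta_3 = -3$. Substituting into the quadratic together with $k = a_1+c_2+1$ gives $a_1 = 2s$, $c_2 = s+2$, $k = 3(s+1)$, $\theta_1 = s+1$ for some $s \in \mathbb{Z}_{\ge 0}$. Using $m_1+m_3 = k+1$ and $(s+1)m_1 = 3m_3$ we obtain $m_1 = 9 - 24/(s+4)$, hence $(s+4) \mid 24$ and $s \in \{0,2,4,8,20\}$. These five candidates are then handled individually:
\begin{itemize}
\item $s=0$: the $3$-cube, which is geometric, so excluded by hypothesis.
\item $s=2$: the array $\{9,4,1;1,4,9\}$ of the Johnson graph $J(6,3)$, which is uniquely determined by its intersection array and is geometric via its spread of ``star'' cliques, hence excluded.
\item $s=4$ and $s=8$: the halved $6$-cube and the Gosset graph, both appearing in the conclusion.
\item $s=20$: array $\{63,22,1;1,22,63\}$, $v=128$, with $m_1=8$; ruled out by an absolute/Krein bound such as $v \le m_1(m_1+3)/2 = 44$.
\end{itemize}

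The eigenvalue and multiplicity bookkeeping is routine. The substantive difficulty is in disposing of the three borderline intersection arrays $\{7,3,1;1,3,7\}$, $\{9,4,1;1,4,9\}$ and $\{63,22,1;1,22,63\}$; these exclusions rely on feasibility tables, the uniqueness/geometricity of $J(6,3)$, and an absolute-bound argument respectively. Assembling these three exclusions is the main obstacle of the proof.
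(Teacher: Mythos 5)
Your proposal is correct and follows the same basic skeleton as the paper: express $\theta_1,\theta_3$ as roots of $x^2-(a_1-c_2)x-k$, then split on whether $\theta_3$ is an integer. In the non-integer case both you and the paper reduce to $a_1=c_2$, $\theta_3=-\sqrt k$; the paper then observes that the local graph is a connected strongly regular graph on $k\le 8$ vertices with no nontrivial integer eigenvalue (hence the pentagon, hence the icosahedron), whereas you use parity of $k$ plus a table check to eliminate $k=7$ --- both work, though the paper's local-graph argument is slightly cleaner and avoids an external lookup. The more substantive divergence is in the $\theta_3=-3$ case: the paper simply invokes \cite[Corollary~1.15.3]{BCN}, which already says $c_2\in\{2,4,6,10\}$ with a unique graph in each case, while you re-derive this by computing $m_1=9-24/(s+4)$, obtaining $(s+4)\mid 24$, and then disposing of the extra candidate $s=20$ by an absolute bound. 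Your route is more self-contained but buys you one more case to kill.

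One small caution on the $s=20$ step: the inequality you wrote, $v\le m_1(m_1+3)/2$, is the absolute bound for a \emph{strongly regular graph} (two-distance set). A Taylor graph has diameter $3$, so that inequality does not apply verbatim to $v=|V(\Gamma)|=2(k+1)$. The correct version here is the absolute bound for the associated regular two-graph (or, equivalently, for the set of $k+1$ lines spanned by antipodal pairs in the $\theta_1$-eigenspace), which gives $k+1\le m_1(m_1+3)/2$. With $k+1=64$ and $m_1=8$ this still reads $64\le 44$, a contradiction, so your conclusion stands; you should just state the bound in the line/two-graph form rather than applying it to $v$. Alternatively, citing \cite[Corollary~1.15.3]{BCN} as the paper does packages this away entirely.
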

\begin{proof}
Suppose $\Gamma$ has intersection array $\{k, c_2, 1;1, c_2, k\}$ and distinct eigenvalues $\theta_0 =k > \theta_1 > \theta_2 > \theta_3$.
The eigenvalues $\theta_1, \theta_3$ are roots of $x^2 -(a_1-c_2)x -k$. So, if $\theta_3 =-3$, then $k =  3a_1-3c_2 +9$.
This implies that $a_1=2c_2 -4$, as $k = a_1 +c_2 +1.$ By  \cite[Corollary 1.15.3]{BCN}, we know that $c_2\in\{2 , 4, 6, 10\}$ and that in each case  there is a unique graph: the 3-cube, the Johnson graph $J(6, 3)$, the halved 6-cube and the Gosset graph, respectively. Note that the first two cases are geometric, so we obtain $\Gamma$  is either the halved 6-cube or the Gosset graph, neither of which are geometric.
Now we assume that $-3 <\theta_3 <-2$. Then $a_1 = c_2$ and $\theta_3 = -\sqrt{k}$  by \cite[page 431]{BCN}. Moreover, any local graph is a connected strongly regular graph with at most 8 vertices, having no non-trivial integer eigenvalues. This means that any local graph must be the pentagon and $\Gamma$ must be the icosahedron.

This shows the proposition.
\end{proof}

\section{Terwilliger graphs}\label{sec:terwgraphs}

A Terwilliger graph is a connected non-complete graph such that, for any two vertices $u, v$ at
distance two, the subgraph induced on $\Gamma_1 (u) \cap \Gamma_1 (v)$ in $\Gamma$ is a clique of size  $\mu$ (for some fixed  $\mu \geq 0$);
clearly in the context of distance-regular graphs we have $\mu = c_2$. Note also that a Terwilliger graph does not contain induced quadrangles and then it is non-geometric from \cite[Lemma 4.2]{-m}. In this section, we determine the Terwilliger graphs satisfying $D\geq 3$.
First, for $\theta_{D}$, we give a generalized result, then we apply this result to our assumption.

\begin{prop}\label{terwgp}
	Let $\Gamma$ be a distance-regular graph with
 diameter $D\geq 3$  and smallest eigenvalue $\theta_D$. Assume that $\Gamma$ is a Terwilliger graph and $c_2 \geq 2$. If $\theta_D \geq -5$, then $\Gamma$ is one of the following:
\begin{itemize}
  \item[(1)] the icosahedron with intersection array $\{5, 2, 1; 1, 2, 5\}$ and $\theta_3 = -\sqrt{5}$;
  \item[(2)] the Doro graph with intersection array $\{10, 6, 4; 1, 2, 5\}$ and $\theta_3 = -3$ (see \cite[Proposition 12.2.2]{BCN});
  \item[(3)] the Conway-Smith graph with intersection array $\{10, 6, 4, 1; 1, 2, 6, 10\}$ and $\theta_4 = -4$ (see \cite[Section 13.2B]{BCN});
  \item[(4)]  locally the Hoffman-Singleton graph (and $c_2 =2$).
\end{itemize}
\end{prop}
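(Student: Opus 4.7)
My plan is to extract tight structural information about the local graph $\Delta := \Delta(x)$ from the Terwilliger condition and then combine it with the spectral bound $\theta_D \ge -5$. Take any two non-adjacent vertices $y, z \in \Delta$; they lie at distance $2$ in $\Gamma$, so $\Gamma_1(y) \cap \Gamma_1(z)$ is a $c_2$-clique containing $x$, and the remaining $c_2 - 1$ members of this clique must lie in $\Delta$ and be pairwise adjacent. Hence $\Delta$ is an $a_1$-regular graph on $k$ vertices in which every non-adjacent pair has exactly $c_2 - 1$ common neighbors forming a clique, and $\Delta$ contains no induced quadrangle. Eigenvalue interlacing (Theorem~\ref{interlace}) applied to the induced subgraph $\Delta$ gives $\eta_{\min}(\Delta) \ge \theta_D \ge -5$, and Lemma~\ref{1} with $m = 5$ gives the arithmetic bound $k < 5a_1 + 25 - 4c_2$.

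Next I would rule out $c_2 \ge 3$. In this case, for any non-adjacent $y, z \in \Delta$ the $c_2 - 1 \ge 2$ common neighbors form a clique $C$, and $\{x, y\} \cup C$ is a clique of size $c_2 + 1$ in $\Gamma$. The Delsarte bound~(\ref{dc-bd}) then forces $c_2 \le k / |\theta_D|$, and combining with Lemma~\ref{1} leaves only a very small set of candidate triples $(k, a_1, c_2)$. The Terwilliger condition also forces $\Delta$ to be amply regular with $\mu(\Delta) = c_2 - 1$ and clique-structured $\mu$-graphs; for each surviving candidate one can pin down the spectrum of $\Delta$ and verify that its smallest eigenvalue drops below $-5$, contradicting interlacing. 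I expect this spectral elimination of $c_2 \ge 3$ to be the technical heart of the argument.

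For the remaining case $c_2 = 2$, $\Delta$ is $a_1$-regular with $\mu(\Delta) = 1$ and contains no induced quadrangle. To conclude that $\Delta$ is a Moore graph of girth $5$ it remains to show that $\Delta$ is triangle-free, which I would obtain by arguing that a triangle in $\Delta$ produces a $4$-clique in $\Gamma$, and combining the Terwilliger condition at the other three vertices of the clique with the bound $\theta_D \ge -5$ to reach a contradiction. Once $\Delta$ is a Moore graph of valency $a_1$ we have $k = a_1^2 + 1$; plugging into Lemma~\ref{1} yields $a_1^2 + 1 < 5a_1 + 17$, so $a_1 \le 7$. The only Moore graphs with $\mu = 1$ and valency at most $7$ are the pentagon ($a_1 = 2$), the Petersen graph ($a_1 = 3$), and the Hoffman--Singleton graph ($a_1 = 7$); the hypothetical $57$-regular Moore graph is excluded since its smallest eigenvalue is $-8 < -5$, again by interlacing.

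Finally, I would reconstruct $\Gamma$ from each admissible local graph by invoking the existing classification of locally-$\Delta$ distance-regular graphs of diameter at least $3$: locally pentagon forces $\Gamma$ to be the icosahedron; locally Petersen yields exactly the Doro graph and the Conway--Smith graph; and locally Hoffman--Singleton gives conclusion~(4). The main obstacle throughout is the elimination of $c_2 \ge 3$, as it demands the tightest interplay between the forced amply-regular structure on $\Delta$ and the spectral bound $\theta_D \ge -5$; the subsidiary difficulty is ruling out triangles in $\Delta$ in the $c_2 = 2$ case, which also requires careful handling.
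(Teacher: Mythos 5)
Your route is genuinely different from the paper's and, at a high level, is a sensible plan; but two of its load-bearing steps are gaps, not proofs, and without them the argument does not close.

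The paper's proof is almost entirely citation-driven: it first applies Lemma~\ref{1} with $m=5$ to get $k\leq 5a_1-4c_2+24\leq 5a_1+16$; it then splits on whether $k\leq(6+\tfrac{8}{57})(a_1+1)$. In the first regime it invokes \cite[Proposition 6(3)]{koo-park} (which directly produces cases (1)--(3)); in the second regime the two inequalities force $a_1\leq 8$, hence $k\leq 56$, and then \cite[Corollary 1.16.6]{BCN} (ruling out $k<50(c_2-1)$) forces $c_2=2$, after which \cite[Theorem 1.16.3]{BCN} identifies the local graph as a strongly regular $(k,a_1,\lambda,1)$ graph with $50\leq k\leq 56$, i.e.\ the Hoffman--Singleton graph. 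Your approach instead tries to reconstruct these external results from the Terwilliger condition on the local graph $\Delta$, interlacing, and the Delsarte bound, and then to finish by a locally-$\Delta$ classification. That is a legitimate alternative architecture, but it is only as strong as its two middle steps, and these are where the problems are.

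First, your elimination of $c_2\geq 3$ does not go through as sketched. The Delsarte bound applied to the $(c_2+1)$-clique $\{y\}\cup(\Gamma_1(y)\cap\Gamma_1(z))$ gives $c_2\leq k/|\theta_D|$, but $|\theta_D|$ is only known to be at least $2$; without pinning $\theta_D$ at $-5$ this is just $c_2\leq k/2$, and combined with Lemma~\ref{1} ($k<5a_1+25-4c_2$) you obtain no upper bound on $a_1$, so no ``very small set of candidate triples.'' The paper sidesteps exactly this difficulty by first cutting off via \cite[Proposition 6(3)]{koo-park} so that $a_1\leq 8$ in the remaining range, and only then using the Terwilliger-specific inequality $k\geq 50(c_2-1)$ to force $c_2=2$. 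Without either of these external inputs, your $c_2\geq 3$ case is open.

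Second, the claim that $\Delta$ is a Moore graph when $c_2=2$ is asserted rather than proved. Your Terwilliger computation correctly shows that non-adjacent pairs in $\Delta$ have exactly $c_2-1=1$ common neighbor, but you still need (a) that $\Delta$ is triangle-free, (b) that $\Delta$ is connected, and (c) that $\Delta$ has diameter $2$ before you may call it a Moore graph. The triangle-freeness argument you gesture at (a $4$-clique plus ``the Terwilliger condition at the other three vertices'' plus $\theta_D\geq -5$) is not a proof — a $4$-clique does not by itself violate the Delsarte bound, and you do not explain what contradiction is reached. The paper avoids this entirely by citing \cite[Theorem 1.16.3]{BCN} for the fact that the local graph of such a Terwilliger graph is strongly regular with $\mu=1$. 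Once you \emph{are} in the Moore-graph situation, the rest of your argument is fine: $k=a_1^2+1<5a_1+17$ gives $a_1\leq 7$, the hypothetical $57$-valent Moore graph is excluded by interlacing, and the locally-pentagon / locally-Petersen / locally-Hoffman--Singleton classifications do yield exactly (1)--(4). So the skeleton is right, but the two middle steps need either the paper's citations or genuinely new arguments to fill them in.
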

\begin{proof}
	
 Assume $\theta_D \geq -5$ and $c_2\geq 2$. From Lemma \ref{1}, we have $k \leq 5a_1 - 4c_2 + 24 \leq 5a_1 + 16$.
 If $ k \leq (6+\frac{8}{57})( a_1 +1)$, we are done by \cite[Proposition 6(3)]{koo-park}. So let $5a_1 +16 \geq k > (6+\frac{8}{57})( a_1 +1)$. This means $a_1 \leq 8$ and hence $k \leq 56$.

 When $k<50(c_2-1)$, we also done by \cite[Corollary 1.16.6]{BCN}. We now assume $k \geq 50(c_2-1) \geq 50$.
 So $c_2 =2$.  This implies that, by \cite[Theorem 1.16.3]{BCN},  any local graph of $\Gamma$ is a strongly regular graph with parameters $(k, a_1, \lambda, 1)$ and $50 \leq k \leq 56$, and hence is the Hoffman-Singleton graph with parameters $(50, 7, 0, 1)$ with distinct eigenvalues $7, 2, -3$ (in this case, $\theta_D<-3$ by \cite[Corollary 3.7]{kyp}). This shows the result.
\end{proof}

If the smallest eigenvalue of $\Gamma$ is at least $-3$, we directly obtain the following corollary.

\begin{cor}\label{cor:terw}
Let $\Gamma$ be  a Terwilliger graph with diameter $D\geq 3$  and $c_2\geq2$. If the smallest eigenvalue $\theta_{D}\geq-3$, then $\Gamma$ is the icosahedron or  the Doro graph.
\end{cor}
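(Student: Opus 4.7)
The plan is to observe that this corollary follows as an immediate specialization of the preceding Proposition \ref{terwgp}, since the hypothesis $\theta_D \geq -3$ is strictly stronger than $\theta_D \geq -5$. First I would invoke Proposition \ref{terwgp} to conclude that $\Gamma$ must be one of the four listed graphs: the icosahedron, the Doro graph, the Conway-Smith graph, or a Terwilliger graph that is locally the Hoffman-Singleton graph (with $c_2 = 2$).

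Next I would eliminate the two cases whose smallest eigenvalue falls below $-3$. The Conway-Smith graph with intersection array $\{10,6,4,1;1,2,6,10\}$ has smallest eigenvalue $\theta_4 = -4$, as recorded in \cite[Section 13.2B]{BCN} and in the statement of Proposition \ref{terwgp}; since $-4 < -3$, this case is excluded by hypothesis. For the locally Hoffman-Singleton case, Proposition \ref{terwgp} already notes that $\theta_D < -3$ (by \cite[Corollary 3.7]{kyp}), so this case is likewise incompatible with $\theta_D \geq -3$.

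The two remaining possibilities, the icosahedron with smallest eigenvalue $-\sqrt{5} > -3$ and the Doro graph with smallest eigenvalue exactly $-3$, both satisfy the eigenvalue condition, and both are genuinely Terwilliger graphs with $c_2 \geq 2$ and $D = 3$. This exhausts the list, yielding the claimed dichotomy.

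There is essentially no obstacle here: the substantive work has already been carried out in Proposition \ref{terwgp}, and the corollary is a one-line case check against the eigenvalue table compiled there. The only point that requires care is making sure that the parenthetical remark inside Proposition \ref{terwgp} about the locally Hoffman-Singleton case (that it forces $\theta_D < -3$) is properly cited so the reader sees why that alternative is genuinely ruled out rather than merely restricted.
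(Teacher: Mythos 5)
Your proposal is correct and takes exactly the same approach as the paper, which presents the corollary as an immediate consequence of Proposition \ref{terwgp} (the paper simply states ``we directly obtain the following corollary'' without spelling out the case elimination you make explicit). Your extra care in citing the parenthetical remark that the locally Hoffman-Singleton case forces $\theta_D < -3$ is a reasonable expansion of what the paper leaves implicit.
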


\section{The distance-regular graphs with diameter $D\geq 3$ and $k\geq 2a_1+3,~c_2\geq 2,~\theta_D \geq -3$}\label{c2geq2}

In this section, we discuss the distance-regular graphs $\Gamma$ with diameter $D\geq 3$, valency $k\geq 2a_1+3$, intersection number $c_2\geq 2$ and smallest  eigenvalue $ \theta_D \geq -3 $. Note that by Lemma \ref{1} we have $-3\leq\theta_D<-2$ and $2a_1+3\leq k\leq 3a_1+4$, and thus $\Gamma$ must be non-geometric except for the case $k=3a_1+3$. The case when $\Gamma$ does not contain an induced quadrangle is handled in Section \ref{sec:terwgraphs}, and we will therefore assume that $\Gamma$ contains an induced quadrangle.
We will deduce in this case a diameter bound and a bound on $c_2$. We will employ the following two lemmas from \cite{BCN}.

\begin{lem}[{cf. \cite[Theorem 5.2.1, Corollary 5.2.2]{BCN}}]\label{Terw ineq}
	Let $\Gamma$ be a distance-regular graph with diameter $D$. If $\Gamma$ contains an induced quadrangle, then
	\begin{align}\label{eq8.1}
  c_i-b_i\geq c_{i-1}-b_{i-1}+a_1+2 \quad(i=1,\cdots,D). 	\end{align}
Furthermore, $$D\leq \frac{k+c_D}{a_1+2},$$
  with equality if and only if equality holds in \eqref{eq8.1} for all i.
\end{lem}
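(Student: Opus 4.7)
The proposition decomposes into two claims. The telescoping inequality is the substantive content; the diameter bound follows at once by summing it. Adding $c_i - b_i - c_{i-1} + b_{i-1} \geq a_1 + 2$ over $i = 1, \ldots, D$ gives $c_D - b_D - c_0 + b_0 \geq D(a_1+2)$, and substituting $c_0 = b_D = 0$ and $b_0 = k$ yields $D \leq (k+c_D)/(a_1+2)$. Since each telescoped slack is nonnegative, equality in the diameter bound holds if and only if each of the $D$ per-index inequalities is individually tight, which gives the stated equality criterion for free.

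For the main inequality I would use the classical Terwilliger local-counting approach. Fix an index $i \in \{1, \ldots, D\}$ and vertices $x, y$ with $d(x,y) = i$. Pick $z \in \Gamma_{i-1}(x) \cap \Gamma_1(y)$, which is nonempty because $c_i \geq 1$. The intersection numbers partition the $k$ neighbors of $y$ by distance to $x$ into blocks of sizes $c_i, a_i, b_i$ at distances $i-1, i, i+1$, and similarly partition the $k$ neighbors of $z$ into blocks of sizes $c_{i-1}, a_{i-1}, b_{i-1}$ at distances $i-2, i-1, i$. Since $y \sim z$, the $a_1$ common neighbors of $y$ and $z$ all lie in $\Gamma_{i-1}(x) \cup \Gamma_i(x)$. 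The target inequality would then emerge from a careful comparison of how many neighbors of $z$ can lie ``further from $x$'' than a corresponding neighbor of $y$, using the induced-quadrangle hypothesis to produce the requisite non-adjacencies that separate these blocks.

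The main obstacle is promoting the seemingly weak global hypothesis (that $\Gamma$ contains some induced quadrangle) into a uniform local structure at every edge $\{y, z\}$. In a distance-regular graph, any single induced $C_4$ forces, by regularity, a positive number of induced $C_4$'s at every pair of vertices at distance $2$; concretely, one needs to argue that not all $c_i$ neighbors of $y$ in $\Gamma_{i-1}(x)$ can be adjacent to $z$, and symmetrically that not all $b_{i-1}$ neighbors of $z$ in $\Gamma_i(x)$ can be adjacent to $y$. Quantifying precisely how much slack this creates and matching the total against $a_1 + 2$ is the bookkeeping heart of the argument, and this is where I would expect the proof to require the most care.
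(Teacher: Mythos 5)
The paper does not prove this lemma; it cites \cite[Theorem~5.2.1, Corollary~5.2.2]{BCN} verbatim. Your telescoping derivation of the diameter bound and of the equality criterion from \eqref{eq8.1} is correct and is exactly what Corollary~5.2.2 does with Theorem~5.2.1 as input.

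The core inequality \eqref{eq8.1}, however, is not proved, and the two-vertex count you set up collapses to a tautology. With $x, y$ at distance $i$ and $z \in \Gamma_{i-1}(x) \cap \Gamma_1(y)$, partitioning $\Gamma_1(y) \cup \Gamma_1(z)$ by distance from $x$ and equating the total with $|\Gamma_1(y) \cup \Gamma_1(z)| = 2k - a_1$ gives
\begin{equation*}
c_{i-1} + (a_{i-1} + c_i - \alpha) + (b_{i-1} + a_i - \beta) + b_i \;=\; 2k - a_1,
\end{equation*}
where $\alpha$ and $\beta$ count the common neighbours of $y, z$ lying at distance $i-1$, respectively $i$, from $x$. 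Since $c_{i-1}+a_{i-1}+b_{i-1} = c_i+a_i+b_i = k$, this simplifies to $\alpha + \beta = a_1$, which is automatic and uses neither the quadrangle hypothesis nor any structural input whatsoever. Terwilliger's actual argument requires at least one additional vertex and a substantially more delicate count in which the quadrangle forces specific non-adjacencies; that is precisely the step you defer to ``bookkeeping'' and do not carry out, so the proof of \eqref{eq8.1} is simply missing. I would also flag that your assertion that one induced quadrangle forces, ``by regularity,'' an induced quadrangle at every pair of vertices at distance two is not a consequence of distance-regularity alone (the $\mu$-graph $\Gamma_1(u) \cap \Gamma_1(v)$ always has $c_2$ vertices, but its edge count is a triple intersection number and need not be constant across pairs), so even the mechanism you propose for localizing the hypothesis is unjustified as written.
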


\begin{lem}[{cf. \cite[Theorem 5.2.3]{BCN}}]\label{lem5.2.3}
Let $\Gamma$ be a distance-regular graph with diameter $D\geq  \frac{k+c_D}{a_1+2} $. Then one of the following holds:
\begin{itemize}
  \item[(i) ] $\Gamma$ is a Terwilliger graph,
  \item[(ii)]  $\Gamma$  is strongly regular with smallest eigenvalue $-2$,
  \item[(iii)] $\Gamma$ is a Hamming graph, a Doob graph, a locally Petersen graph, a Johnson graph, a halfcube, or the Gosset graph.
\end{itemize}
\end{lem}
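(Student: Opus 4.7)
The plan is to use Lemma \ref{Terw ineq} as a dichotomy engine. First I would split on whether $\Gamma$ contains an induced quadrangle. If it does not, then for any two vertices $u,v$ at distance $2$, any pair of common neighbors must be adjacent (otherwise an induced $4$-cycle arises), so $\Gamma_1(u) \cap \Gamma_1(v)$ is a clique of size $c_2$, i.e. $\Gamma$ is a Terwilliger graph and we land in case (i). So from now on I assume $\Gamma$ contains an induced quadrangle.

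Under that assumption, Lemma \ref{Terw ineq} says $D \le (k+c_D)/(a_1+2)$ with equality iff the recursion $c_i-b_i = c_{i-1}-b_{i-1}+(a_1+2)$ is tight for every $i$. Combined with the hypothesis $D \ge (k+c_D)/(a_1+2)$, equality is forced at every level. Unwinding with $b_0=k$, $c_0=0$ gives the rigid closed form
\[
c_i - b_i \;=\; i(a_1+2) - k, \qquad a_i \;=\; i(a_1+2) - 2c_i \qquad (1 \le i \le D).
\]
In particular the whole intersection array is pinned down by the short data $(a_1, c_2, c_3, \ldots, c_D)$, and the sequences $(b_i), (c_i), (a_i)$ satisfy very tight monotonicity constraints.

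From here my plan is to run a case analysis on $a_1$ and on the $\mu$-graph structure (the subgraph induced on $\Gamma_1(u)\cap\Gamma_1(v)$ for $d(u,v)=2$). The equality case of Lemma \ref{Terw ineq} (its proof actually identifies what the $\mu$-graphs look like) forces each $\mu$-graph to be a disjoint union of cliques of a fixed size, and then a local-graph argument forces the local graphs $\Delta(x)$ to be strongly regular (or void of edges when $a_1=0$). I would then invoke the classical local-characterization theorems: (a) a distance-regular graph locally Petersen is already in Hall's list which yields case (iii); (b) a distance-regular graph whose $\mu$-graphs are $(c_2)$-cocktail-party or $(c_2)$-cliques together with the rigid identity $a_i = i(a_1+2)-2c_i$ pushes $\Gamma$ into the Johnson / Hamming / halved cube / Doob family (Egawa-, Terwilliger-, Neumaier-style arguments). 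The exceptional case where $c_2 = 1$ in the induced-quadrangle setting, or where the parameters produce a strongly regular graph of smallest eigenvalue $-2$, collapses the diameter to $2$ and lands in case (ii) via the Seidel--Cameron--Goethals--Shult classification.

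The main obstacle is clearly the last step: going from the rigid parameter recursion to the explicit list in (iii). The equality case of \eqref{eq8.1} is the easy combinatorics; the hard part is showing no sporadic feasible parameter set survives. Concretely, I expect to spend most effort on (a) ruling out $a_1 \ge 1$ configurations outside Johnson/Hamming/halved-cube/Doob/Gosset by a clique-extension or local-graph argument, and (b) handling the $a_1=0$ branch, where the rigidity forces $\Gamma$ to be a halved cube or (in bipartite double guise) a hypercube but the diameter bound excludes the bipartite cases. The Gosset graph appears naturally as the unique sporadic exception with $a_1 > 0$ and locally a strongly regular graph not in the Johnson/Hamming list, which I would isolate by checking small feasible $(k, a_1, c_2)$ triples against the BCN tables.
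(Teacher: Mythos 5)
This lemma is not proved in the paper at all; it is cited verbatim from \cite[Theorem 5.2.3]{BCN}, and the paper's ``proof'' is simply the reference. So there is nothing for your attempt to be compared against line by line, but we can still ask whether your sketch would constitute a correct independent proof. It would not, as it stands.

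Your opening moves are sound: if there is no induced quadrangle then $\Gamma$ is Terwilliger (case (i)), and if there is one then combining the hypothesis $D \ge (k+c_D)/(a_1+2)$ with Lemma~\ref{Terw ineq} forces $D = (k+c_D)/(a_1+2)$ and hence equality in \eqref{eq8.1} at every level, pinning the array down to $c_i - b_i = i(a_1+2) - k$. That is exactly the right setup. However, the whole content of \cite[Theorem 5.2.3]{BCN} lies in the step you describe as ``my plan is to run a case analysis'' and ``invoke the classical local-characterization theorems.'' That step is the theorem. The equality case of Terwilliger's inequality by itself does not straightforwardly yield the list in (iii); it requires a substantial structural analysis (Terwilliger's original work plus Neumaier's characterization of completely regular codes, the classification of locally strongly regular distance-regular graphs, etc.), and your sketch names families of arguments without executing any of them or verifying that they cover all cases. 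In particular, (a) your claim that the equality case forces each $\mu$-graph to be a disjoint union of cliques of a fixed size needs a citation or proof --- the structure forced by tightness in \eqref{eq8.1} is more delicate and is not obviously of that form; and (b) your ``exceptional case where $c_2 = 1$ in the induced-quadrangle setting'' is vacuous, since an induced quadrangle on $u \sim v \sim w \sim z \sim u$ with $u \not\sim w$ exhibits two common neighbors of $u,w$, so $c_2 \ge 2$ whenever a quadrangle is present. There is no honest shortcut here: for the purposes of this paper you should cite \cite[Theorem 5.2.3]{BCN} rather than attempt to reprove it, and if you do want a self-contained proof you would need to actually carry out the local-graph classification you are gesturing at.
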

\noindent We remark that if $\Gamma$ in the above lemma has smallest eigenvalue  $-3 \leq \theta_D<-2$ and is not a Terwilliger graph (see Section \ref{sec:terwgraphs}), then $D\leq 4$; this observation will be useful in proving Proposition \ref{diambound} later.

The following lemma determines the bounds for intersection numbers $b_2$ and $c_3$.
\begin{lem}\label{7}
Let $\Gamma$ be a distance-regular graph with
diameter $D\geq3$. Assume $\Gamma$ contains an induced quadrangle and has smallest eigenvalue $\theta_{D}\geq-3$. Then $b_2\leq \frac{1}{2}b_1+\frac{1}{2}$ and $c_3\geq b_3+2c_2-2$.
\end{lem}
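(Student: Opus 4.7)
The plan is to combine the Terwilliger-type inequality from Lemma \ref{Terw ineq} at two different indices with the eigenvalue bound from Lemma \ref{1}. The starting observation is that Lemma \ref{1} applied with $m=3$ gives $k \leq 3a_1 - 2c_2 + 8$, which, using $k = a_1 + 1 + b_1$, rearranges to
\[
b_1 \leq 2a_1 - 2c_2 + 7, \qquad \text{equivalently} \qquad a_1 - c_2 \geq \tfrac{1}{2}(b_1 - 7).
\]
This will be the only place the smallest-eigenvalue hypothesis $\theta_D \geq -3$ enters.

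For the first inequality, I apply Lemma \ref{Terw ineq} with $i=2$ (using $c_1 - b_1 = 1 - b_1$) to get $c_2 - b_2 \geq a_1 - b_1 + 3$, i.e.\
\[
b_1 - b_2 \geq a_1 - c_2 + 3.
\]
Substituting $a_1 - c_2 \geq \tfrac{1}{2}(b_1 - 7)$ from the previous display then yields $b_1 - b_2 \geq \tfrac{1}{2}(b_1 - 1)$, which is exactly $b_2 \leq \tfrac{1}{2}b_1 + \tfrac{1}{2}$.

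For the second inequality, I first extract a convenient absolute upper bound on $b_2$ by combining $b_2 \leq b_1 - a_1 + c_2 - 3$ (from the $i=2$ Terwilliger inequality above) with $b_1 \leq 2a_1 - 2c_2 + 7$, obtaining $b_2 \leq a_1 - c_2 + 4$. Then I apply Lemma \ref{Terw ineq} with $i=3$:
\[
c_3 - b_3 \geq c_2 - b_2 + a_1 + 2 \geq c_2 - (a_1 - c_2 + 4) + a_1 + 2 = 2c_2 - 2,
\]
which rearranges to the desired bound $c_3 \geq b_3 + 2c_2 - 2$.

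There is no real obstacle here; both claims follow from a clean chain of substitutions. The only point worth flagging is that one must verify that the hypotheses of Lemma \ref{Terw ineq} and Lemma \ref{1} are indeed available in our setting, namely that $\Gamma$ contains an induced quadrangle (given) and that $D \geq 3$ (so that the eigenvalue bound from Lemma \ref{1} applies). Everything else is pure arithmetic.
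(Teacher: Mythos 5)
Your proof is correct and follows essentially the same route as the paper's: both combine Lemma \ref{1} (giving $b_1 \leq 2a_1 - 2c_2 + 7$) with the Terwilliger inequality of Lemma \ref{Terw ineq} at $i=2$ and $i=3$, with only a trivial difference in arithmetic bookkeeping (you derive the intermediate bound $b_2 \leq a_1 - c_2 + 4$ where the paper reuses $b_2 \leq \tfrac{1}{2}b_1 + \tfrac{1}{2}$ directly, but these are equivalent given $b_1 \leq 2a_1 - 2c_2 + 7$).
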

\begin{proof}
We have $k\leq 3a_1-2c_2+8$ by Lemma \ref{1} and $c_i-b_i\geq c_{i-1}-b_{i-1}+a_1+2$ for $i=1,2,\dots,D$ by Lemma \ref{Terw ineq}.
It follows that $b_1\leq 2a_1-2c_2+7$, and thus
\begin{align*}
	b_2 &\leq b_1+c_2-a_1-3\\
	&\leq \frac{1}{2}b_1+\frac{2a_1-2c_2+7}{2}+c_2-a_1-3\\
	&=\frac{1}{2}b_1+\frac{1}{2},
	\end{align*}
	and
	\begin{align*}
	c_3 &\geq b_3+c_2-b_2+a_1+2\\
	&\geq b_3+c_2-\frac{1}{2}b_1-\frac{1}{2}+a_1+2\\
	&\geq b_3+2c_2-2.
\end{align*}
This proves the lemma.
\end{proof}

Whether or not $\Gamma$ contains an induced $4$-claw (i.e. an induced $K_{1,4}$) will now determine our approach.
The following "claw bound" will be valuable.

\begin{lem}[{cf.\cite[Lemma 2, Proposition 3]{shilla}}]\label{5}
Let $\Gamma$ be a distance-regular graph with valency $k$ and diameter $D\geq 2$.
If $\Gamma$ contains an induced  $K_{1,t}$,  say $x, x_1, \ldots, x_t$ for $t\geq 2$, then
 $$c_2\geq \frac{t(a_1+1)-k}{\binom{t}{2}}+1.$$ Furthermore, if equality holds and  all induced paths of length $2$ lie in an induced  $K_{1,t}$ then $\Gamma$ is a Terwilliger graph.
\end{lem}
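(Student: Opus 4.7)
The plan is to establish the bound via a double counting argument inside the $1$-sphere of $x$, and then deduce the Terwilliger conclusion from a careful analysis of when equality holds.

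Set $S := \Gamma_1(x) \setminus \{x_1, \ldots, x_t\}$, so $|S| = k - t$, and for each $y \in S$ let $d_y$ denote the number of indices $i$ with $y \sim x_i$. Because $\{x_1,\ldots,x_t\}$ is an independent set, each $x_i$ has all of its $a_1$ neighbours in $\Gamma_1(x)$ lying inside $S$, so double counting edges between the two sides yields $\sum_{y\in S}d_y = t\,a_1$. Next, I would count the pairs $(y,\{i,j\})$ with $y\in S$ adjacent to both $x_i$ and $x_j$: for each $\{i,j\}$, the vertices $x_i,x_j$ are at distance~$2$ and hence have $c_2$ common neighbours in $\Gamma$, one of which is $x$ itself; if $p_{ij}$ denotes the number lying in $S$, then $p_{ij} \leq c_2-1$ and $\sum_{y\in S}\binom{d_y}{2} = \sum_{i<j}p_{ij} \leq \binom{t}{2}(c_2-1)$.

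The final ingredient is the elementary integer inequality $d-1 \leq \binom{d}{2}$, which holds for every $d \geq 0$, with equality iff $d \in \{1,2\}$. Summing over $y\in S$ and chaining with the two counts above gives
\[
t\,a_1 - (k-t) \;=\; \sum_{y\in S}(d_y - 1) \;\leq\; \sum_{y\in S}\binom{d_y}{2} \;\leq\; \binom{t}{2}(c_2-1),
\]
which rearranges to the claimed inequality $c_2 \geq \frac{t(a_1+1)-k}{\binom{t}{2}}+1$.

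For the equality clause, equality in the chain forces both $d_y\in\{1,2\}$ for every $y\in S$ and $p_{ij}=c_2-1$ for every $i<j$; the latter says precisely that, for the given $K_{1,t}$ with centre $x$, every common neighbour of two leaves other than $x$ is adjacent to $x$. Under the hypothesis that every induced $P_3$ extends to an induced $K_{1,t}$, this observation can be reapplied: given any two vertices $u,v$ at distance $2$ and any two of their common neighbours $w$ and $w'$, the induced path $u-w'-v$ extends to an induced $K_{1,t}$ centred at $w'$; applying the equality statement to this new claw forces every common neighbour of $u$ and $v$, in particular $w$, to be adjacent to $w'$. Thus $\Gamma_1(u)\cap\Gamma_1(v)$ is a clique and $\Gamma$ is Terwilliger. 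The main obstacle is spotting the correct combinatorial step, namely the integer inequality $d-1\leq\binom{d}{2}$ with its sharp equality characterisation, which is exactly strong enough to extract both the bound and the structural conclusion from a single counting argument.
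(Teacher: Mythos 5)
Your proof is correct. The paper gives no proof of this lemma — it is cited directly from \cite[Lemma~2, Proposition~3]{shilla} — so there is no in-text argument to compare against, but your double-counting derivation is the standard way to establish the claw bound, and the details are all sound: the identity $\sum_{y\in S}d_y=ta_1$ uses that the leaves form a coclique inside $\Gamma_1(x)$, the bound $\sum_{y\in S}\binom{d_y}{2}=\sum_{i<j}p_{ij}\leq\binom{t}{2}(c_2-1)$ correctly accounts for $x$ being one of the $c_2$ common neighbours of each pair of leaves, and the elementary inequality $d-1\leq\binom{d}{2}$ (with equality exactly for $d\in\{1,2\}$) chains these cleanly into $t(a_1+1)-k\leq\binom{t}{2}(c_2-1)$. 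The equality analysis is also right: since the parameter identity forces the whole chain to be tight for \emph{every} induced $K_{1,t}$, applying $p_{ij}=c_2-1$ to the claw at $w'$ containing the induced path $u\,\text{--}\,w'\,\text{--}\,v$ (which must be centred at $w'$, as $u\not\sim v$) shows every common neighbour of $u,v$ other than $w'$ lies in $\Gamma_1(w')$, giving $w\sim w'$ and hence the Terwilliger property.
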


From the above lemma and  Lemma \ref{1} we obtain the following result.

\begin{lem}\label{6}
Let $\Gamma$ be a distance-regular graph with  diameter $D\geq 3$, and let $2\leq m<t$ be integers. If $\Gamma$ contains an induced $K_{1,t}$ and $\theta_{D}\geq -m$, then $$c_2\geq \frac{(t-m)(a_1+1)-m(m-1)+\binom{t}{2}+1}{\binom{t}{2}-m+1}.$$
\end{lem}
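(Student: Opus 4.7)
The plan is that this lemma should follow as a direct algebraic combination of the two preceding lemmas, namely the claw bound in Lemma \ref{5} and the valency bound in Lemma \ref{1}; the strategy is simply to eliminate $k$ between them.

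First I would apply Lemma \ref{5} to the induced $K_{1,t}$ guaranteed by hypothesis, obtaining
$$\binom{t}{2}(c_2-1) \;\geq\; t(a_1+1)-k.$$
Next I would invoke Lemma \ref{1} with the hypothesis $\theta_D \geq -m$, which yields $k < m(a_1+m)-(m-1)c_2$. Since $k$, $a_1$, $c_2$, and $m$ are all integers, this strict inequality upgrades to $k \leq m(a_1+m)-(m-1)c_2 - 1$, so using the identity $m(a_1+m)=m(a_1+1)+m(m-1)$ I would rewrite
$$t(a_1+1)-k \;\geq\; (t-m)(a_1+1)-m(m-1)+(m-1)c_2+1.$$

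Substituting this into the claw bound and collecting the $c_2$ terms on the left gives
$$\Bigl(\tbinom{t}{2}-m+1\Bigr)c_2 \;\geq\; (t-m)(a_1+1)-m(m-1)+\tbinom{t}{2}+1.$$
Since $2\leq m<t$ implies $\binom{t}{2}\geq \binom{m+1}{2}=\tfrac{m(m+1)}{2}>m-1$, the coefficient on the left is strictly positive, so dividing through yields the asserted inequality.

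I do not expect any real obstacle in carrying this out; the only points requiring care are (i) promoting the strict inequality from Lemma \ref{1} to a non-strict one by integrality, and (ii) tracking the algebraic rearrangement so that the $-m(m-1)$ term emerges correctly. These are both routine.
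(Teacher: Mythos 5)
Your proposal is correct and follows essentially the same route as the paper: combine the claw bound of Lemma~\ref{5} (lower bound on $k$) with Lemma~\ref{1} (upper bound on $k$, tightened to $k \leq m(a_1+m)-(m-1)c_2-1$ by integrality) and solve for $c_2$. The paper states the $-1$ version of Lemma~\ref{1} without comment, so your explicit remark on integrality, and your check that $\binom{t}{2}-m+1>0$ before dividing, are small clarifications rather than departures.
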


\begin{proof}
By Lemma \ref{1} we have $k\leq m(a_1+m)-(m-1)c_2-1$. On the other hand, we have $k\geq t(a_1+1)-{\binom{t}{2}}(c_2-1)$ by Lemma \ref{5}. This means $$\left(\binom{t}{2}-m+1\right)c_2\geq (t-m)(a_1+1)-m(m-1)+\binom{t}{2}+1.$$
So $c_2\geq \frac{(t-m)(a_1+1)-m(m-1)+\binom{t}{2}+1}{\binom{t}{2}-m+1}$. This proves the lemma.
\end{proof}

From Lemma \ref{5}, if $\Gamma$ contains an induced $K_{1,4}$ we immediately obtain the following lower bound for $c_2$.

\begin{cor}	\label{6'}
Let $\Gamma$ be a distance-regular graph with  diameter $D\geq 3$.
If  $\Gamma$ contains an induced $K_{1,4} $ and $\theta_D\geq -3$, then $c_2 \geq \frac{a_1 +2}{4}$.
\end{cor}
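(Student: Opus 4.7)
The plan is to obtain this corollary as a direct specialization of Lemma \ref{6} with the parameter choice $t = 4$ and $m = 3$. All hypotheses of Lemma \ref{6} are met under our assumptions: we have $2 \leq 3 < 4$, the induced $K_{1,4}$ is given, and $\theta_D \geq -3 = -m$ is given. So the only thing to do is to evaluate the right-hand side of the bound in Lemma \ref{6} at $(t,m) = (4,3)$.

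With these choices, $\binom{t}{2} = 6$, so the denominator $\binom{t}{2} - m + 1 = 4$, while the numerator
$(t-m)(a_1+1) - m(m-1) + \binom{t}{2} + 1 = (a_1+1) - 6 + 6 + 1 = a_1 + 2$. Lemma \ref{6} then yields $c_2 \geq \frac{a_1+2}{4}$, as required.

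Alternatively, one can bypass Lemma \ref{6} and combine Lemma \ref{5} with Lemma \ref{1} directly, which is essentially how Lemma \ref{6} itself was derived: from $\theta_D \geq -3$, Lemma \ref{1} (with $m=3$) gives $k \leq 3a_1 + 8 - 2c_2$; from the induced $K_{1,4}$, Lemma \ref{5} (with $t=4$) gives $c_2 \geq \frac{4(a_1+1) - k}{6} + 1$. Substituting the upper bound on $k$ into this last inequality and simplifying produces $6c_2 \geq a_1 + 2 + 2c_2$, i.e. $4c_2 \geq a_1 + 2$. There is no genuine obstacle to overcome here; the corollary is really a one-line arithmetic specialization of the claw bound in the presence of our eigenvalue hypothesis, and in the proof I would simply state that it follows from Lemma \ref{6} with $(t,m) = (4,3)$.
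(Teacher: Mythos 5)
Your proof is correct and matches the paper's intent exactly: Corollary \ref{6'} is obtained by specializing Lemma \ref{6} at $(t,m)=(4,3)$, and your arithmetic ($\binom{4}{2}=6$, numerator $a_1+2$, denominator $4$) checks out. The alternative direct derivation you sketch (combining Lemma \ref{1} with Lemma \ref{5}) is just an unwound version of the same argument, since that is precisely how Lemma \ref{6} was proved.
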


Note that, in this case, by \cite[Theorem 3.2]{gdrg-3}, we know that $\Gamma$ is non-geometric.
The following proposition gives the diameter bound.

\begin{prop}\label{diambound}
Let $\Gamma$ be a distance-regular graph with 	diameter $D\geq3$,  $\theta_D\geq -3$ and $ k\geq 2a_1+3$. If $\Gamma$ contains an induced quadrangle, then $D\leq4$.
\end{prop}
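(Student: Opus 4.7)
I argue by contradiction: assume $D \geq 5$. Since $\Gamma$ contains an induced quadrangle, Lemma \ref{Terw ineq} yields $D(a_1+2) \leq k + c_D$. Combining with $c_D \leq k$ and the bound $k \leq 3a_1 + 8 - 2c_2 \leq 3a_1 + 4$ from Lemma \ref{1} (using $c_2 \geq 2$), I obtain
\[
D(a_1+2) \leq 2k \leq 6(a_1+2) - 4,
\]
so $D \leq 5$. It therefore remains to eliminate the case $D = 5$.

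Suppose $D = 5$. The chain above forces $a_1 \geq 4c_2 - 6 \geq 2$. I split according to whether equality holds in Lemma \ref{Terw ineq}. If equality holds, then the hypothesis $D \geq (k+c_D)/(a_1+2)$ of Lemma \ref{lem5.2.3} is satisfied, so $\Gamma$ is a Terwilliger graph (contradicting the induced quadrangle hypothesis), a strongly regular graph (contradicting $D \geq 3$), or one of the families Hamming, Doob, locally Petersen, Johnson, halfcube, or Gosset. In each remaining family, $D = 5$ is incompatible with $\theta_D \geq -3$: the Hamming, Doob, Johnson, and halfcube families of diameter $5$ all have $\theta_D = -5$, while the locally Petersen and Gosset examples have diameter at most $4$.

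If strict inequality holds in Lemma \ref{Terw ineq}, then $k + c_5 \geq 5(a_1+2) + 1$, which combined with $k \leq 3a_1 + 4$ and $c_5 \leq k$ forces $a_1 \geq 3$. For the remaining finite list of tuples $(a_1, k, c_2)$ I would enumerate candidate intersection arrays using Lemma \ref{7} (giving $b_2 \leq (b_1+1)/2$ and $c_3 \geq b_3 + 2c_2 - 2$), the chain inequality of Lemma \ref{Terw ineq}, the monotonicity $b_i \geq c_{D-i}$ for $i + (D-i) \leq D$, and integrality of the $k_i$; in the antipodal $2$-cover subcase $c_5 = k$, the relation $b_i = c_{5-i}$ further restricts the array. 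For each candidate, Lemma \ref{kelhu} applied to the full matrix $R$ of Lemma \ref{subconst quot mat} (or a direct eigenvalue computation) verifies $\theta_D < -3$, yielding the required contradiction. The main obstacle is this finite but tedious case enumeration and eigenvalue verification, which as elsewhere in the paper is best handled with computer algebra software.
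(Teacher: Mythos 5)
Your reduction to $D=5$ and your treatment of the equality case in Lemma \ref{Terw ineq} are both correct and, in fact, mirror what the paper's remark after Lemma \ref{lem5.2.3} is used for. The problem lies entirely in your strict-inequality branch, and it is a genuine gap rather than a matter of tedium.

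You derive $a_1 \geq 3$ and then speak of a ``remaining finite list of tuples $(a_1,k,c_2)$,'' but with only $c_2 \geq 2$ and $k \leq 3a_1+4$ in hand, $a_1$ has no upper bound: the constraints $2.5a_1+5 < k \leq 3a_1+4$ admit solutions for every $a_1 \geq 3$, with $c_2$ correspondingly unbounded. The finiteness you assume is precisely what must be proved. The paper achieves it in two steps you have skipped. First, it splits on whether $\Gamma$ contains an induced $K_{1,4}$, disposing of the no-$K_{1,4}$ subcase by citing an external result (\cite[Lemma 2.10(3)]{BGK}). Second, in the $K_{1,4}$ subcase it invokes the claw bound (Corollary \ref{6'}) to get $c_2 \geq \tfrac{a_1+2}{4}$; combined with $k \leq 3a_1 - 2c_2 + 8$ this pins $k$ into the razor-thin window $\tfrac{5}{2}a_1 + 5 < k \leq \tfrac{5}{2}a_1 + 7$ and essentially determines $c_2$ from $a_1$. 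Only then does integrality of $k_2 = kb_1/c_2$, together with parity of $ka_1$, cut the parameter $t = \lfloor a_1/4 \rfloor$ down to a handful of values in three of the four residue classes for $k$; the fourth class ($k = 10t+8$) resists the integrality argument and needs a separate treatment via $b_2$, $c_3$ estimates and ultimately the antipodal $2$-cover classification from \cite{Park}. None of this is the routine ``enumerate and compute eigenvalues'' you describe, and without the $K_{1,4}$ dichotomy and the claw bound your search space is infinite. You should either import those two ingredients or find a substitute that actually bounds $a_1$ before appealing to a finite search.
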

\begin{proof}
	 Note that $\Gamma$ contains an induced $K_{1,3}$ and $c_2\geq2$. If $\Gamma$ contains an induced $K_{1,3}$ but no induced $K_{1,4}$, then by \cite[Lemma 2.10(3)]{BGK} we are done. So we assume $\Gamma$ contains an induced $K_{1,4}$. Then by Corollary \ref{6'}, we have $c_2\geq \frac{a_1+2}{4}$. And by  Lemmas \ref{Terw ineq} and \ref{lem5.2.3} we have $c_i-b_i\geq c_{i-1}-b_{i-1}+a_1+2$ for $i=1,2\dots,D$ and $D< (k+c_D)/(a_1+2)$.

	Assume that $D\geq5$. Then $2k\geq(k+c_D)>5(a_1+2)$. As $c_2\geq \frac{a_1+2}{4}$ and $k\leq 3a_1-2c_2+8$, we may fix $k\leq 2\frac{1}{2}a_1+7$, and then $k \in \{2\frac{1}{2}a_1+5\frac{1}{2},2\frac{1}{2}a_1+6,2\frac{1}{2}a_1+6\frac{1}{2},2\frac{1}{2}a_1+7\}$. This implies $c_2\leq \frac{3a_1+8-k}{2}\leq\frac{a_1}{4}+\frac{5}{4}$.
 Let $t>0$ be an integer.
 If $a_1\in\{4t,4t+1,4t+2\} $, then $c_2=t+1$ and if also $a_1= 4t+3$ then $c_2=t+2$.
		
	Using $\frac{5}{2}a_1+5<k\leq3a_1-2c_2+8$ and the fact that $k\cdot a_1$ is an even integer (the handshake lemma applied to the local graph of $\Gamma$) we find the following possibilities for the quadruple $(k,a_1,b_1,c_2)$:
	\begin{align*}
		(k,a_1,b_1,c_2)\in \{&(10t+12,4t+2,6t+9,t+1),\\
		&(10t+11,4t+2,6t+8,t+1),\\
		&(10t+8,4t+1,6t+6,t+1),\\
		&(10t+6,4t,6t+5,t+1)\}.
	\end{align*}
	
    Now, $k_2=\frac{k\cdot b_1}{c_2}$ must be an integer. This doesn't help us in the $k=10t+8$ case just described, since there $c_2 \big| b_1$, but in the other three cases it severely restricts the possible values of $t$. It may be checked that the following are the only cases which may occur:
	\begin{align*}
	t\in\{5,2,1\},&\text{ if }k=10t+12,\\
	t\in\{1\},&\text{ if }k=10t+11,\\
	t\in\{3,1\},&\text{ if }k=10t+6.
	\end{align*}
	
	Let $k=10t+12$ and $t=5$. Then $k=62,~ k_2=k\cdot\frac{b_1}{c_2}=403$,
$D<\frac{2k}{a_1+2} =\frac{2\times 62}{24}<6$.  So we obtain $D=5$. By Lemma \ref{7}, we have $b_2\leq \frac{1}{2}b_1+\frac{1}{2}=20$ and $c_3\geq b_3+2c_2-2\geq 3c_2-2\geq16$.
	As a result,
	\begin{align*}
	k_3= k_2\cdot\frac{b_2}{c_3}\leq403\times\frac{20}{16}\leq503,\\
	k_4= k_3\cdot\frac{b_3}{c_4}\leq503\times\frac{20}{16}\leq628,\\
	k_5= k_4\cdot\frac{b_4}{c_5}\leq628\times\frac{20}{16}=785.
	\end{align*}
	So the order of $\Gamma$ is equal to $1+k+k_2+k_3+k_4+k_5\leq2400$. Consulting the tables in \cite[Chapter 14]{BCN}, we find none that match these requirements, and we are done in this case.
	
	The other cases for $k=10t+12$ with $t=2,1$, $k=10t+6$ and $k=10t+11$ go in same fashion, and we leave them to the reader. So we are left with the case $k=10t+8$.
	
	Let $k=10t+8$, so that $k_2=k\cdot\frac{b_1}{c_2}=6k$. We have $b_2\leq\lfloor\frac{1}{2}b_1+\frac{1}{2}\rfloor=3t+3$ and $c_3\geq 3c_2-1=3t+2$ (as $b_2<\frac{1}{2}b_1+\frac{1}{2}$). It follows that $c_3\in \{3t+2,3t+3\}$, and thus $k_3= k_2\cdot\frac{b_2}{c_3}\leq6k\cdot\frac{3t+3}{3t+2}$.
	Note that
$ b_3\leq b_2-c_2+c_3-a_1-2
	\leq3t+3-(t+1)+3t+3-4t-3
	=t+2 $
	and $c_4\geq b_4+c_3-b_3+a_1+2\geq1+3t+2-(t+2)+4t+3\geq 6t+4$.
	Thus,
	$$k_4 = k_3\cdot \frac{b_3}{c_4} \leq 6k\cdot\frac{3t+3}{3t+2} \cdot\frac{t+2}{6t+4},$$
	$$k_5 = k_4\cdot \frac{b_4}{c_5} \leq 6k\cdot \frac{3t+3}{3t+2} \cdot (\frac{t+2}{6t+4})^2.$$
	It follows that $k_4+k_5\leq 2k$ unless $t\leq2$.  If $t\leq 2$, we obtain that $v\leq 2048$ and consulting the tables \cite[Chapter 14]{BCN} allows us to close this case.

    If $t\geq 3$, then \cite[Theorem 2(2)]{Park} and \cite[Theorem 5.1.2]{BCN} imply that $\Gamma$ is an  antipodal 2-cover of a strongly regular graph (which we denote $\Delta$) of diameter 5. As $\theta_{D}\geq -3$, the graph $\Delta$ has smallest eigenvalue more than $-3$. Thus, $\Delta$ is a conference graph or $\Delta$ is a strongly regular graph with smallest eigenvalue $-2$.	
	In the first case $a_1=\frac{1}{2}k-1$ and we are done by \cite[Theorem 16]{koo-park}. In the second case we have $k \leq 28$ or $c_2\in \{2,4\}$ by \cite[Theorem 3.12.4]{BCN}. 	
	Since $c_2=t+1$, this implies that $t \leq 3$ and thus $k=10t+8\leq 38$. Note that $\Delta$ and $\Gamma$ have the same $a_1$ and $c_2$.
	As $\Gamma$ is an antipodal $2$-cover of $\Delta$, we have $|V(\Gamma)| = 2|V(\Delta)|$, so that $v= 2(1+k+k_2)\leq 534$. We finish once again by consulting \cite[Chapter 14]{BCN}.
	
 This completes the proof.
\end{proof}

In \cite[Proposition 2.11]{BGK}, it was shown that a distance-regular graph $\Gamma$ which has $k > \frac{8}{3}(a_1 +1)$, $c_2 \geq 2$, and no induced $K_{1,4}$ must be geometric with smallest eigenvalue $-3$.
On the other hand, if $\Gamma$ is a non-geometric distance-regular graph with smallest eigenvalue $\theta_D\geq -3$, and if the valency $k\geq 2a_1+3$, then $c_2\geq \frac{a_1-5}{6}$ by \cite[Proposition 9.9]{drgsurvey}. We will improve this lower bound on $c_2$ later.

The following results about geometric distance-regular graphs will be required;
they are due to Metsch (see \cite{metsch1995characterization} or \cite[Proposition 9.1]{drgsurvey}).

\begin{prop} \label{prop5}
	Let $\Gamma$ be an distance-regular graph with valency $k$ and diameter $D\geq 2$.
	Assume that the following two properties hold for some positive integer $m\geq 1$:
	
\begin{enumerate}
\item[(1)] $a_1 \geq (2m-1)(c_2-1)$,
\item[(2)] $k < (m+1)(a_1+1)-\frac{1}{2}m(m+1)(c_2-1)$.
\end{enumerate}
Define a line to be a maximal clique $C$ satisfying $|V(C)| \geq a_1 + 2 - (m-1)(c_2-1)$.
Then every vertex is on at most $m$ lines and any edge lies in a unique line.
\end{prop}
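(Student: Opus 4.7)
The plan is to follow Metsch's two-stage counting argument: first establish that each edge lies in a unique line, then use this to bound the number of lines through each vertex.

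For edge uniqueness, suppose for contradiction that an edge $\{u, v\}$ lies in two distinct lines $L_1, L_2$. Every vertex of $(L_1 \cup L_2) \setminus \{u, v\}$ is a common neighbor of $u$ and $v$, so
\[
|L_1| + |L_2| - |L_1 \cap L_2| - 2 \;\leq\; a_1.
\]
Combining the lower bound $|L_i| \geq a_1 + 2 - (m-1)(c_2-1)$ with hypothesis (1) yields
\[
|L_1 \cap L_2| \;\geq\; a_1 + 2 - 2(m-1)(c_2-1) \;\geq\; c_2 + 1.
\]
By maximality of $L_1$, there must exist $x \in L_1 \setminus L_2$ and $y \in L_2 \setminus L_1$ with $x \not\sim y$ (otherwise $L_1 \cup L_2$ would strictly extend $L_1$ as a clique); and $d(x,y) = 2$ since any $w \in L_1 \cap L_2$ is a common neighbor. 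Every vertex of $L_1 \cap L_2$ is adjacent to $x$ (as $L_1$ is a clique) and to $y$ (as $L_2$ is a clique), so $L_1 \cap L_2 \subseteq \Gamma_1(x) \cap \Gamma_1(y)$ and hence $|L_1 \cap L_2| \leq c_2$, contradicting the inequality above.

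For the bound on lines through a vertex, edge uniqueness forces two distinct lines through a vertex $u$ to intersect only in $\{u\}$. So if $u$ lies on $r$ lines $L_1, \ldots, L_r$, the sets $L_i \setminus \{u\}$ are pairwise disjoint subsets of $\Gamma_1(u)$, giving
\[
r\bigl(a_1 + 1 - (m-1)(c_2-1)\bigr) \;\leq\; \sum_{i=1}^r (|L_i|-1) \;\leq\; k.
\]
Setting $r = m+1$ and expanding, this crude count already contradicts hypothesis (2) when $m \leq 2$ (and also when $c_2 = 1$ for any $m$). For $m \geq 3$ with $c_2 \geq 2$ a gap of $\tfrac{1}{2}(m^2 - m)(c_2-1)$ remains, and I would close it by counting non-adjacent pairs $(x, y)$ with $x \in L_i \setminus \{u\}$ and $y \in L_j \setminus \{u\}$ for $i \neq j$: each such pair has $c_2 - 1$ common neighbors besides $u$, and a careful tally of where those common neighbors can sit --- inside some line through $u$, off all lines within $\Gamma_1(u)$, or in $\Gamma_2(u)$ --- supplies the additional savings needed to match the $\tfrac{1}{2} m(m+1)(c_2-1)$ term in hypothesis (2), ruling out $r = m+1$.

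Stage 1 is essentially bookkeeping once $|L_1 \cap L_2| \geq c_2 + 1$ is extracted. The main obstacle is Stage 2 for $m \geq 3$ with $c_2 \geq 2$: the naive count over line sizes alone does not reach the strength of hypothesis (2), and recovering the precise quadratic-in-$m$ improvement requires a careful simultaneous analysis of adjacencies between vertices of distinct lines together with the distribution of the $c_2 - 1$ extra common neighbors of each non-adjacent cross-line pair.
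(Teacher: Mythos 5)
The paper does not prove this proposition; it is attributed to Metsch (cited as \cite{metsch1995characterization} and \cite[Proposition 9.1]{drgsurvey}), so there is no internal proof to compare against, and your attempt must be judged on its own merits.

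Your Stage 1 (uniqueness of the line through an edge) is correct and complete: the union/intersection count gives $|L_1 \cap L_2| \geq |L_1| + |L_2| - 2 - a_1$, the line-size lower bound together with hypothesis (1) pushes this to at least $c_2+1$, the maximality of $L_1$ produces a non-adjacent cross pair $x,y$ at distance 2, and $L_1 \cap L_2 \subseteq \Gamma_1(x)\cap\Gamma_1(y)$ then yields the contradiction $c_2+1 \leq |L_1\cap L_2| \leq c_2$.

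Stage 2, however, has a genuine gap. You correctly observe that edge-uniqueness makes the sets $L_i\setminus\{u\}$ pairwise disjoint, so $r\bigl(a_1+1-(m-1)(c_2-1)\bigr)\le k$, and that setting $r=m+1$ contradicts hypothesis (2) when $m\le 2$ or $c_2=1$. But for $m\ge 3$ with $c_2\ge 2$ you only \emph{describe} a plan (``count non-adjacent cross pairs and tally where their $c_2-1$ extra common neighbors sit''); you never carry it out. This is the entire difficulty of Metsch's theorem --- the quadratic-in-$m$ term in hypothesis (2) is precisely the nontrivial content --- so without executing that counting argument there is no proof. As a small but telling sign that the bookkeeping has not actually been done, your stated shortfall $\tfrac12(m^2-m)(c_2-1)$ is wrong: comparing $(m+1)\bigl(a_1+1-(m-1)(c_2-1)\bigr)$ with $(m+1)(a_1+1)-\tfrac12 m(m+1)(c_2-1)$ gives a shortfall of $\tfrac12(m+1)(m-2)(c_2-1)$, not $\tfrac12 m(m-1)(c_2-1)$ (e.g.\ for $m=3$, $c_2=2$ the gap is $2$, not $3$). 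To make this a proof you would need to actually produce, from the cross-pair common-neighbor analysis, an improvement of more than $\tfrac12(m+1)(m-2)(c_2-1)$ to the lower bound on $k$, and verify that it goes through with $r=m+1$ lines. Until that is supplied, the argument is a plausible sketch but not a proof of the proposition.
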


The following proposition was shown by Van Dam, Koolen and Tanaka, see \cite[Proposition 9.8]{drgsurvey}.
\begin{prop}\label{propDKT}
	Let $\Gamma$ be a distance-regular graph with diameter $D$ with the property that there exists a positive
	integer $m$ and a set $\mathcal C$ of cliques in $\Gamma$ such that every edge is contained in exactly one clique of $\mathcal C$ and every
	vertex $x$ is contained in exactly $m$ cliques of $\mathcal C$. If $|{\mathcal C}| < |V(\Gamma)|$, then $\Gamma$ is geometric with smallest eigenvalue $-m$.
	In particular, this is the case if $\min\{|V(C)| \mid C \in {\mathcal C}\} > m$.
\end{prop}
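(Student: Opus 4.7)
The plan is to encode the hypothesis via the vertex--clique incidence matrix. Let $N$ be the $|V(\Gamma)| \times |\mathcal{C}|$ matrix with $N_{x,C}=1$ if $x \in V(C)$ and $0$ otherwise. Because every vertex lies in exactly $m$ cliques of $\mathcal{C}$ and every edge lies in a unique such clique, the $(x,y)$-entry of $NN^\top$ equals $m$ when $x=y$, equals $1$ when $x \sim y$, and must equal $0$ otherwise (any clique containing two nonadjacent vertices would contradict cliqueness). Hence
\[
NN^\top = A + mI.
\]
Since $NN^\top$ is positive semidefinite, this already forces $\theta_D \geq -m$.

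Next, I would exploit $|\mathcal{C}| < |V(\Gamma)|$: then $\text{rank}(N) \leq |\mathcal{C}| < |V(\Gamma)|$, so $NN^\top$ is singular, which forces $-m$ to actually occur as an eigenvalue of $A$. Thus $\theta_D = -m$ exactly. To upgrade this to geometricity, I would combine the Delsarte bound \eqref{dc-bd}, which now reads $|V(C)| \leq 1 + k/m$ for each $C \in \mathcal{C}$, with the following local count: fix a vertex $x$; the neighbors of $x$ are partitioned among the $m$ cliques of $\mathcal{C}$ that contain $x$, so
\[
\sum_{C \in \mathcal{C},\, C \ni x} (|V(C)| - 1) = k, \qquad \text{equivalently} \qquad \sum_{C \ni x} |V(C)| = k + m.
\]
But the Delsarte bound upper-bounds this sum by $m(1+k/m)=k+m$, so equality must hold term-by-term: every clique through $x$ has size exactly $1+k/m$. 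As $x$ was arbitrary, every $C \in \mathcal{C}$ is a Delsarte clique, and $\Gamma$ is geometric with smallest eigenvalue $-m$ by definition.

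Finally, for the ``in particular'' clause I would just double-count incidences. Summing $|V(C)|$ over $\mathcal{C}$ gives $\sum_{C \in \mathcal{C}} |V(C)| = m|V(\Gamma)|$, and if $|V(C)| \geq m+1$ for every $C$ then $(m+1)|\mathcal{C}| \leq m|V(\Gamma)|$, so $|\mathcal{C}| \leq \tfrac{m}{m+1}|V(\Gamma)| < |V(\Gamma)|$, reducing to the first part. The only nontrivial step is spotting the clean interaction between the Delsarte upper bound and the identity $\sum_{C \ni x} |V(C)| = k+m$, which forces equality simultaneously on every clique containing $x$; everything else is routine linear algebra and counting.
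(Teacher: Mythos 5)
Your proof is correct and complete. The paper does not actually prove this proposition---it cites it as \cite[Proposition 9.8]{drgsurvey} from the Van Dam--Koolen--Tanaka survey---so there is no internal proof to compare against. But your argument via the vertex--clique incidence matrix, the identity $NN^\top = A + mI$, the rank bound forcing $-m$ to be attained, and then the equality analysis $\sum_{C \ni x}|V(C)| = k+m \leq m(1+k/m)$ forcing every clique to be Delsarte, is exactly the standard route to this result and matches what the cited survey does. It is also precisely the technique the paper itself deploys in Section \ref{c_2=1}, where it writes $A = MM^\top - tI$ for the vertex--clique matrix of $(a_1+2)$-cliques and uses the rank of $MM^\top$ to conclude $|\mathcal{C}| \geq |V(\Gamma)|$ in the non-geometric case; your proof is the contrapositive direction of that same idea, upgraded to a full geometricity statement by the Delsarte equality argument. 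No gaps.
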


As a consequence of Propositions \ref{prop5} and \ref{propDKT} we have:

\begin{thm} \label{thm7.6}
	Let $\Gamma$ be a distance-regular graph with smallest eigenvalue $ \theta_{\min} \geq -3$, $c_2 \geq 2$ and $k \geq 2a_1+3$. If $a_1 \geq 5(c_2-1)$ and $k < 4(a_1+1)-6(c_2-1)$, then $\Gamma$ is geometric with $\theta_{\min}=-3$.
\end{thm}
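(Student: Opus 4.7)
The numerology of the hypotheses is designed to match Proposition~\ref{prop5} with $m=3$: the conditions $a_1 \geq 5(c_2-1)$ and $k < 4(a_1+1)-6(c_2-1)$ are precisely $a_1 \geq (2m-1)(c_2-1)$ and $k < (m+1)(a_1+1)-\tfrac{1}{2}m(m+1)(c_2-1)$ for $m=3$. So my first step is to apply that proposition to obtain a family $\mathcal{L}$ of ``lines''---the maximal cliques of $\Gamma$ of size at least $a_1+2-2(c_2-1) = a_1-2c_2+4$---with the properties that every edge of $\Gamma$ lies in a unique line and every vertex lies on at most three lines of $\mathcal{L}$.

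Next I would upgrade ``at most three'' to ``exactly three,'' using only the hypothesis $k \geq 2a_1+3$. Fix any vertex $x$. Since each of its $k$ neighbors is joined to $x$ by an edge contained in a unique line of $\mathcal{L}$, we have
$$k = \sum_{C \in \mathcal{L},\, C \ni x}(|C|-1).$$
Every clique of $\Gamma$ has at most $a_1+2$ vertices, so each summand is at most $a_1+1$. If $x$ lay on at most two lines we would conclude $k \leq 2(a_1+1) = 2a_1+2$, contradicting $k \geq 2a_1+3$. Hence every vertex of $\Gamma$ is on exactly three lines.

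The remaining step is to invoke Proposition~\ref{propDKT} with $m=3$; its ``in particular'' clause applies provided the minimum line size strictly exceeds $3$. Each line contains at least $a_1-2c_2+4$ vertices, so it suffices to check $a_1 \geq 2c_2$. Since $c_2 \geq 2$, the inequality $5(c_2-1) \geq 2c_2$ reduces to $3c_2 \geq 5$, which holds, and the hypothesis $a_1 \geq 5(c_2-1)$ then gives $a_1 \geq 2c_2$ as required. Proposition~\ref{propDKT} therefore yields $|\mathcal{L}| < |V(\Gamma)|$ and concludes that $\Gamma$ is geometric with smallest eigenvalue $-3$. I do not anticipate any genuine obstacle: the numerics have been arranged so that Propositions~\ref{prop5} and~\ref{propDKT} chain directly, and the sole nontrivial observation is that $k \geq 2a_1+3$ is exactly what is needed to promote Metsch's ``at most $m$ lines per vertex'' to ``exactly $m$.''
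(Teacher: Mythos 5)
Your proposal is correct and follows exactly the paper's own argument: apply Proposition~\ref{prop5} with $m=3$, use $k \geq 2a_1+3$ together with the bound $|C|\leq a_1+2$ to upgrade ``at most three lines per vertex'' to ``exactly three,'' and then invoke the ``in particular'' clause of Proposition~\ref{propDKT}. The only cosmetic difference is your lower bound of $4$ on the line size versus the paper's bound of $5$ (obtained via $a_1+2-2(c_2-1)\geq 3(c_2-1)+2\geq 5$); both comfortably exceed $3$, so both suffice.
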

\begin{proof}
	By Proposition \ref{prop5}, every vertex lies on at most $3$ lines and any edge lies on a unique line, where a line is a maximal clique $C$
	satisfying $|V(C)| \geq a_1 + 2 - 2(c_2-1)$. Note that any line has at least $3(c_2-1) +2 \geq 5$ vertices. As $k \geq 2a_1 +3$, we see that every vertex lies on exactly $3$ lines as a line can not have more than $a_1 +2$ vertices. Hence we are done by Proposition \ref{propDKT}.
\end{proof}

Combining the conclusions above, we obtain a the following useful corollary.

 \begin{cor} \label{fu}
	Let $\GG$ be a distance-regular graph that has an induced quadrangle but no $K_{1,4}$'s. If $\theta_{\min} \geq -3$,   $k\geq 2a_1+3$ and
  $ c_2 < \frac{a_1+6}{5}$, then $\GG$ is geometric.
\end{cor}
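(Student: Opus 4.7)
The plan is to reduce the statement to Theorem \ref{thm7.6}, which already shares the three standing hypotheses $\theta_{\min}\geq -3$, $k\geq 2a_1+3$, and $c_2\geq 2$; the last of these is automatic here, since the presence of an induced quadrangle supplies two vertices at distance $2$ with two common non-adjacent neighbors. So the task reduces to verifying the two extra numerical conditions required by Theorem \ref{thm7.6}, namely $a_1\geq 5(c_2-1)$ and $k<4(a_1+1)-6(c_2-1)$.

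The first condition is an immediate rearrangement of the hypothesis: $c_2<\tfrac{a_1+6}{5}$ gives $5(c_2-1)<a_1+1$, and since both sides are integers this forces $5(c_2-1)\leq a_1$.

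For the second condition I invoke \cite[Proposition 2.11]{BGK} (quoted in the paragraph preceding Proposition \ref{prop5}): if $c_2\geq 2$, $\GG$ has no induced $K_{1,4}$, and $k>\tfrac{8}{3}(a_1+1)$, then $\GG$ is already geometric with smallest eigenvalue $-3$ and the corollary holds outright. So I may assume $k\leq \tfrac{8}{3}(a_1+1)$, and it suffices to verify the strict inequality $\tfrac{8}{3}(a_1+1)<4(a_1+1)-6(c_2-1)$. Clearing denominators turns this into the equivalent condition $c_2<\tfrac{2a_1+11}{9}$, and a one-line cross-multiplication shows $\tfrac{a_1+6}{5}\leq \tfrac{2a_1+11}{9}$ for every $a_1\geq 0$ (the inequality reduces to $9a_1+54\leq 10a_1+55$). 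Combined with $c_2<\tfrac{a_1+6}{5}$ this yields $c_2<\tfrac{2a_1+11}{9}$, and hence $k<4(a_1+1)-6(c_2-1)$ as required.

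Both hypotheses of Theorem \ref{thm7.6} are now in place, and that theorem delivers the desired conclusion that $\GG$ is geometric (with $\theta_{\min}=-3$). I do not anticipate any real obstacle: the whole content of the corollary is that the numerical threshold $c_2<\tfrac{a_1+6}{5}$ has been calibrated precisely so as to feed simultaneously through \cite[Proposition 2.11]{BGK} and Theorem \ref{thm7.6}, the two sides of the dichotomy ``$k$ large'' versus ``$k$ small''.
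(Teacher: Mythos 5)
Your proposal is correct and follows essentially the same route as the paper's proof: use \cite[Proposition 2.11]{BGK} to reduce to the case $k\leq\tfrac83(a_1+1)$, then verify that $c_2<\tfrac{a_1+6}{5}$ forces the hypotheses $a_1\geq 5(c_2-1)$ and $k<4(a_1+1)-6(c_2-1)$ of Theorem~\ref{thm7.6}. The only cosmetic difference is that you present it as a direct dichotomy while the paper phrases it as a proof by contradiction (assume $\GG$ non-geometric, deduce from Theorem~\ref{thm7.6} that it is geometric); you also spell out the check $a_1\geq 5(c_2-1)$ which the paper leaves implicit.
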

\begin{proof}
It is easy to verify that $\GG$ has an induced $K_{1,3}$ and $c_2\geq 2$.
Assume that $\GG$ is not geometric. If $\GG$ has no induced $K_{1,4}$, as mentioned earlier, we obtain that $k \leq \frac{8}{3}(a_1+1)$ by \cite[Proposition 2.11]{BGK}. Then $4(a_1+1)-6(c_2-1) \geq (4-\frac{6}{5})(a_1+1)+\frac{6}{5} > \frac{8}{3}(a_1+1) \geq k$, however this contradicts Theorem \ref{thm7.6}.
\end{proof}

It is known by Corollary \ref{6'} that if a distance-regular graph $\GG$ has an induced $K_{1,4}$, then $c_2\geq \frac{a_1+2}{4}$. In fact, this can be improved in the case that $\GG$ is not geometric and has induced quadrangles. The following proposition asserts that, under these assumptions, the case $\frac{a_1+2}{4}\leq c_2 < \frac{a_1+6}{5}$ does not occur.

\begin{prop}\label{propc_2}
Let $\GG$ be a distance-regular graph with $D\geq 3$, $\theta_D \geq -3$, $c_2 \geq 2$ and $k\geq 2a_1+3$.
	If  $\GG$ is   non-geometric and contains  an induced  quadrangle, then $ c_2 \geq \frac{a_1+6}{5} $.
\end{prop}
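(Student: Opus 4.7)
The plan is a proof by contradiction. Assume $\GG$ is non-geometric and $c_2 < (a_1+6)/5$. Since $a_1, c_2$ are integers, this inequality is equivalent to $a_1 \geq 5(c_2-1)$, which is precisely condition (1) of Theorem \ref{thm7.6}. I would then split on whether $\GG$ contains an induced $K_{1,4}$.

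If $\GG$ has no induced $K_{1,4}$, apply Corollary \ref{fu} directly: all of its hypotheses (induced quadrangle, no $K_{1,4}$, $\theta_{\min} \geq -3$, $k \geq 2a_1+3$, $c_2 < (a_1+6)/5$) are satisfied, so $\GG$ is geometric, contradicting our assumption.

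If $\GG$ contains an induced $K_{1,4}$, apply Lemma \ref{5} with $t=4$, $m=3$ to obtain the lower bound $k \geq 4(a_1+1)-6(c_2-1)$. Comparing this with the upper bound $k \leq 3a_1 -2c_2 + 8$ from Lemma \ref{1} yields $a_1 \leq 4c_2 - 2$. Together with $a_1 \geq 5c_2 - 5$, this forces $c_2 \leq 3$, and the resulting tight bounds pin $k$ down as well, leaving exactly four candidate triples $(c_2, a_1, k)$: namely $(2,5,18)$, $(2,5,19)$, $(2,6,22)$, and $(3,10,32)$. Applying Proposition \ref{diambound} bounds the diameter by $4$ in every case, and combining Lemma \ref{7} with the standard monotonicity of the $b_i$'s and $c_i$'s gives bounds on $k_2, k_3, k_4$ whose sum is well below the $4096$-vertex threshold covered by the feasibility tables in \cite[Chapter 14]{BCN}. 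Inspection of these tables — supplemented if necessary by Lemma \ref{kelhu} to discard arrays whose smallest eigenvalue falls below $-3$ — is expected to confirm that no non-geometric distance-regular graph exists in any of these four cases.

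The main obstacle will be this last step, the finite case analysis. Three of the four cases have $k$ not divisible by $3$, and since $\theta_D = -3$ is necessary for a geometric DRG with $\theta_D \in [-3, -2)$ (as $t = k/|\theta_D|$ must be an integer), these cases cannot admit any geometric DRG at all, so the task reduces to showing outright non-existence via tables and Lemma \ref{kelhu}. The case $(c_2,a_1,k)=(2,5,18)$ is the most delicate: here $k$ is divisible by $3$ and equality is attained in Lemma \ref{5}, so a geometric DRG with these parameters is not a priori excluded; one must therefore argue either that no DRG with this intersection array exists, or that any such DRG is necessarily geometric, in order to contradict the non-geometric hypothesis.
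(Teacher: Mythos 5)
Your proposal follows essentially the same route as the paper: handle the no-$K_{1,4}$ case via Corollary~\ref{fu}, derive the same pair of linear inequalities to pin down a short list of parameter triples, bound the diameter by Proposition~\ref{diambound}, and eliminate survivors by table lookup. The genuine gap is precisely the one you flag yourself: the case $(a_1,c_2,k)=(5,2,18)$. You correctly observe that it is the delicate case (it attains equality in the claw bound, and $3\mid k$, so a geometric graph with this array is not a priori excluded), but the plan of consulting the tables, supplemented by Lemma~\ref{kelhu}, cannot close it, because neither tool distinguishes geometric from non-geometric graphs sharing an intersection array. The paper disposes of this case with a single external result, \cite[Lemma 4.3]{BGK}, which asserts that a distance-regular graph with $k=3(a_1+1)$ (and smallest eigenvalue at least $-3$) contains no induced $K_{1,4}$; this directly contradicts the standing hypothesis of the $K_{1,4}$ branch, so no non-existence argument is needed at all. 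Without that lemma or an equivalent structural fact, your argument does not terminate.

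Two secondary remarks. The triple $(c_2,a_1,k)=(2,5,19)$ can be discarded at once, since $k_2 = k b_1/c_2 = 19\cdot 13/2$ is not an integer, so your four candidates really collapse to the paper's three pairs $(a_1,c_2)\in\{(5,2),(6,2),(10,3)\}$. And you obtain the lower bound $k\geq 4(a_1+1)-6(c_2-1)$ from the claw bound (Lemma~\ref{5}, with $t=4$; the ``$m=3$'' you mention is not a parameter of that lemma), which is only available inside the $K_{1,4}$ branch, whereas the paper derives the same inequality directly from the contrapositive of Theorem~\ref{thm7.6}, which already uses the non-geometric hypothesis and needs no $K_{1,4}$. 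Both paths reach the same finite list, so this is a difference in route rather than a flaw.
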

\begin{proof}
    Assume that $\GG$ is not geometric.
If $c_2\leq\frac{1}{5}(a_1+1)$, we have $4(a_1+1)-6(c_2-1)\leq k\leq 3a_1-2c_2+8 $ by Theorem \ref{thm7.6} and Lemma \ref{1}, which implies that $c_2\geq \frac{a_1+2}{4}$. This shows that $5\leq a_1\leq 10$.  Because $c_2$ is an integer, we may assume $\lceil \frac{a_1+2}{4}\rceil \leq c_2\leq \lfloor \frac{a_1+5}{5}\rfloor$, and then the possibilities for the pair $(a_1,c_2)$ are $(5,2),~(6,2),~(10,3)$.

Assume now that $\GG$ has an induced $K_{1,4}$ (otherwise we may simply apply Corollary \ref{fu}). For the case  $(a_1,c_2)=(5,2)$, we have $k=18=3(a_1+1)$, and then $\GG$ has no an induced $K_{1,4}$ by \cite[Lemma 4.3]{BGK}, a contradiction.
For the other cases, because $\GG$ contains an induced quadrangle we obtain $D\leq 4$ by Proposition \ref{diambound}. If  $(a_1,c_2)=(6,2)$, then $k=22$, $k_2=k\cdot\frac{b_1}{c_2}=165$, and $k_3=k_2\cdot\frac{b_2}{c_3}\leq  k_2\cdot\frac{\frac12(b_1+1)}{(2c_2-2)}= 660$ by Lemma \ref{7}. So $v\leq 1+k+k_2+k_3+k_4 \leq 2(1+k+k_2+k_3)<2048$. We may therefore check the feasible array table in \cite[Chapter 14]{BCN}, and we find that this parameter set does not exist. The case  $(a_1,c_2)=(10,3)$ can be shown to be non-existent in the same way.
\end{proof}

\section{Bound for $a_1$} \label{sec:a1bound}
In this section, we suppose that the  distance-regular graph $\GG$  has the same constraints as in Section \ref{c2geq2}. Suppose further that $\GG$ is non-geometric and contains an induced quadrangle. In order to search for feasible arrays for $\GG$ with the aid of a computer, we will find an upper bound for $a_1$. We have already reduced to the case $2a_1+3\leq k\leq 3a_1-2c_2+8$. By  Proposition \ref{propc_2}, we have $c_2\geq \frac{a_1+6}{5}$ and then $k\leq\frac{13a_1+28}{5}$.
We will assume $a_1 \geq 44$, so that $c_2\geq10$ and $k\leq\frac{13a_1+28}{5}\leq \frac{8}{3}(a_1+1)$; since we are looking for an upper bound on $a_1$, and this upper bound will be greater than 44, this assumption on $a_1$ is no restriction. The following lemma will be crucial for us.

\begin{lem} \label{bigguy}
	Let $\Gamma$ be a distance-regular with $v$ vertices, valency $k$, diameter $D\geq3$ and distinct eigenvalue $k=\theta_0>\theta_1>\dots>\theta_D$ with respective multiplicities $m_0=1,m_1,\dots,m_D$. Let $\zeta=\frac{v-1}{k},~\gamma=\frac{\theta_1}{a_1+1},~\delta=\frac{k}{a_1+1}$. Assume there exist an $N\leq4$ such that $\frac{k}{N}\leq \min\{m_1,m_D\}$. Then $a_1+1\leq \frac{N(\zeta-1)\delta}{\gamma^2}$.
\end{lem}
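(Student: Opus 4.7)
The plan is to recast the conclusion as a clean inequality for $\theta_1$, and then derive it via a standard trace computation on $A := A(\Gamma)$.

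First I would unfold the definitions of $\zeta, \gamma, \delta$ and verify algebraically that
$$
\frac{N(\zeta-1)\delta}{\gamma^2} \;=\; \frac{N\,(v-k-1)(a_1+1)}{\theta_1^2},
$$
so the target $a_1+1 \leq N(\zeta-1)\delta/\gamma^2$ is equivalent to the spectral inequality
$$
\theta_1^2 \;\leq\; N(v-k-1). \qquad (\ast)
$$
All subsequent effort is directed toward $(\ast)$.

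Next I would invoke the standard trace identities $\operatorname{tr}(A)=0$ and $\operatorname{tr}(A^2)=vk$, which together with $m_0=1$ and $\theta_0=k$ give
$$
\sum_{i=1}^D m_i\theta_i^2 \;=\; k(v-k).
$$
Dropping every summand except the extreme ones $i=1$ and $i=D$ (all summands being non-negative), and then applying the multiplicity hypothesis $m_1, m_D \geq k/N$, produces
$$
\frac{k}{N}\left(\theta_1^2 + \theta_D^2\right) \;\leq\; m_1 \theta_1^2 + m_D \theta_D^2 \;\leq\; k(v-k),
$$
so $\theta_1^2 + \theta_D^2 \leq N(v-k)$, i.e.\ $\theta_1^2 \leq N(v-k)-\theta_D^2$.

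The step I expect to be the delicate one is closing the gap between $N(v-k)-\theta_D^2$ and the target $N(v-k-1)$: we must secure $\theta_D^2 \geq N$. Because the hypothesis insists $N \leq 4$, this reduces to $|\theta_D| \geq 2$, i.e.\ $\theta_D \leq -2$. In the context of Section~\ref{sec:a1bound}, where the lemma will be applied, we are already in the regime $-3 \leq \theta_D < -2$ enforced by Lemma~\ref{1} together with the running assumption $k \geq 2a_1+3$, so $\theta_D^2 > 4 \geq N$. Substituting gives
$$
\theta_1^2 \;\leq\; N(v-k) - \theta_D^2 \;\leq\; N(v-k) - N \;=\; N(v-k-1),
$$
which is exactly $(\ast)$. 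Reversing the algebraic equivalence established at the outset yields the stated bound $a_1+1 \leq N(\zeta-1)\delta/\gamma^2$.
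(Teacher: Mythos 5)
Your proof is essentially the same as the paper's: both compute $\mathrm{tr}(A^2)=vk=\sum_i m_i\theta_i^2$, drop all but the $i=0,1,D$ terms, plug in $m_1,m_D\geq k/N$, use $\theta_D^2\geq 4\geq N$, and then unwind $\zeta,\gamma,\delta$ to land on $a_1+1\leq N(\zeta-1)\delta/\gamma^2$. The one place you do better: the paper justifies $\theta_D\leq -2$ with the bare assertion ``$\Gamma$ is regular and non-complete,'' which as stated is not actually sufficient (odd cycles $C_7, C_9,\dots$ are regular, non-complete, distance-regular of diameter $\geq 3$, and have $\theta_D>-2$); you instead correctly note that in the regime where the lemma is invoked, Lemma~\ref{1} together with $k\geq 2a_1+3$ forces $\theta_D<-2$ outright, which is the honest reason the step goes through.
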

\begin{proof}
	We have $vk=\sum_{i=0}^D m_i\theta_i^2\geq k^2+m_1\theta_1^2+m_D\theta_D^2$. As $\Gamma$ is regular and non-complete, we find $\theta_D\leq-2$. So we obtain
	\begin{align*}
	(\zeta k+1)\cdot k&\geq k^2+\frac{k}{N}\theta_1^2+\frac{k}{N}\theta_D^2\\
	&\geq k^2+\frac{k}{N}\theta_1^2+\frac{k}{N}\cdot4
	\end{align*}
	Therefore $\theta_1^2\leq N(\zeta-1)k$.
	
	By the definition of $\gamma,\delta$ we find that $a_1+1\leq \frac{N(\zeta-1)\delta}{\gamma^2}$.
\end{proof}

We will obtain an upper bound for $a_1$ according to the above Lemma in this section. Let us start with discussing parameter $N$. Suppose the distance-regular graph $\Gamma$ has  distinct eigenvalues $k=\theta_0>\theta_1>\cdots>\theta_D$ with respective multiplicities $m_0=1,m_1,\cdots, m_D$. By \cite[Theorem 3.6]{kyp} and \cite[Theorem 4.4.3(ii)]{BCN}, we only need to discuss the cases that the second largest eigenvalue $\theta_1$ satisfies $\frac{b_1}{2}-1<\theta_1\leq b_1-1$ (as $\theta_D\geq -3$). Note that the distance-regular graphs with $\theta_1=b_1-1$  were classified in \cite[Theorem 4.4.11]{BCN}. Therefore, we assume that $\frac{b_1}{2}-1<\theta_1< b_1-1$ in the following discussion.

When $m_1\geq k$, we can assume $N\leq 1$, which satisfies the condition of  Lemma \ref{bigguy}. When $m_1< k$, we have known from \cite[Theorem 4.4.4]{BCN} that
if $\theta_1\notin\mathbb{Z}$, $\theta_{1}$ and $\theta_D$ are conjugate  algebraic integers,   hence $m_1=m_D$; if  $\theta_1\in \mathbb Z $, $\theta_1$ divides $b_1-1$  which is not in the scope of our discussion. So we will discuss $N$ when $m_1<k$ and $\theta_1\notin\mathbb{Z}$.

In this case, it  follows that $m_D=m_1<k$ and then the local graph of $\Gamma$ has an eigenvalue  $-\frac{b_1}{\theta_D+1}-1$   with multiplicity at least $k-m_D$ by \cite[Theorem 4.4.4]{BCN}. Then we have
\begin{align*}
 & ka_1\geq  a_1^2+(k-m_D)(-\frac{b_1}{\theta_D+1}-1)^2, \\
\implies~~ & a_1(b_1+1) \geq  (k-m_D)(\frac{b_1}{2}-1)^2.
\end{align*}
Since $k\geq2a_1+3$, we have $a_1\leq b_1-2$. It follows that $(b_1-2)(b_1+1)\geq \frac{1}{4}(k-m_D)(b_1-2)^2$. One can easily  check that $m_D\geq k-4$, so $m_1\geq k-4\geq 0.956k$ as $k\geq 2a_1+3\geq91$. Thus, we have $N\leq \frac{1}{0.956}$. Note that this upper bound value  is greater than 1, so we always assume $N\leq  \frac{1}{0.956}$ whether $m_1\geq k$ or $m_1<k$.

The next two propositions utilize Lemma \ref{bigguy} to provide upper bounds for $a_1$ when $\Gamma$ has diameter $3$ or $4$.

\begin{prop}\label{a_1 bound3}
	Let $\Gamma$ be a non-geometric distance-regular graph with $D=3, ~\theta_D\geq -3$ and $k\geq 2a_1+3\geq 91$.
 If $\Gamma$ contains an induced quadrangle, then $a_1<100$.
\end{prop}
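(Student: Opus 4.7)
The plan is to apply Lemma \ref{bigguy} with $D = 3$, bound each of $N$, $\zeta - 1$, $\delta$, and $\gamma$ by an explicit function of $a_1$, and then verify that the resulting inequality $a_1 + 1 \leq N(\zeta - 1)\delta/\gamma^2$ cannot hold once $a_1 \geq 100$. The discussion preceding the proposition has already established that $N = 1/0.956$ is an admissible choice in Lemma \ref{bigguy}, and that we may assume $\theta_1 > b_1/2 - 1$, which gives $\gamma > (b_1/2 - 1)/(a_1+1)$.

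First I would assemble the intersection-number estimates. Since $\Gamma$ is non-geometric and contains an induced quadrangle, Proposition \ref{propc_2} yields $c_2 \geq (a_1+6)/5$; combined with Lemma \ref{1} this gives $k \leq (13a_1+28)/5$ and hence $b_1 \leq (8a_1+23)/5$. Because $D = 3$ forces $b_3 = 0$, Lemma \ref{7} supplies $b_2 \leq (b_1+1)/2$ and $c_3 \geq 2c_2 - 2$. Writing $v - 1 = k + k_2 + k_3$ and using $k_j = kb_1\cdots b_{j-1}/(c_1\cdots c_j)$, I obtain
$$\zeta - 1 \;=\; \frac{b_1}{c_2}\Bigl(1 + \frac{b_2}{c_3}\Bigr) \;\leq\; \frac{b_1}{c_2}\Bigl(1 + \frac{b_1+1}{4c_2 - 4}\Bigr),$$
while $\delta = k/(a_1+1) \leq (13a_1+28)/(5(a_1+1))$. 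Thus the entire right-hand side of the bound from Lemma \ref{bigguy} becomes an explicit rational expression in the pair $(k, c_2)$.

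The main obstacle is that this right-hand side is not monotone in $k$ or $c_2$ individually: enlarging $k$ inflates $\zeta - 1$ and $\delta$ but simultaneously enlarges the lower bound on $\gamma$ via $b_1$, so one must optimize jointly under the constraints $c_2 \geq (a_1+6)/5$ and $k + 2c_2 \leq 3a_1 + 8$. To handle this, I would introduce the scaled parameters $\alpha = b_1/(a_1+1)$ and $\beta = c_2/(a_1+1)$; up to $O(1/a_1)$ corrections the feasible region becomes the triangle $\{\alpha + 2\beta \leq 2,\ \beta \geq 1/5\}$ and the right-hand side divided by $a_1 + 1$ becomes an explicit rational function of $(\alpha,\beta)$. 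A short check shows that its supremum on this triangle is attained at the corner $\alpha = 8/5$, $\beta = 1/5$ (i.e., $c_2$ at its minimum and $k$ at its maximum), where the asymptotic value of the right-hand side of Lemma \ref{bigguy} is $(1/0.956)\cdot 24 \cdot (13/5)/(16/25) \approx 102$. Retaining the lower-order correction terms and plugging in the extreme feasible values at $a_1 = 100$ (namely $c_2 = 22$, $k = 264$) gives a right-hand side bounded by roughly $94$, which contradicts $a_1 + 1 = 101$; since the worst case over the feasible region was the one plugged in, this rules out $a_1 \geq 100$ and proves the proposition.
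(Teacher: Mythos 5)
Your proposal is correct, but it takes a genuinely different route from the paper. The paper splits into cases according to the size of $\gamma^2 = (\theta_1/(a_1+1))^2$ and, in the cases $\gamma^2 < 1$, invokes \cite[Lemma 6]{shilla} to conclude $a_3 \leq a_1$ and hence $c_3 \geq b_1 + 1$, which yields the much stronger estimate $k_3 \leq \tfrac{1}{2}k_2$. Each sub-case then balances a weaker lower bound on $\gamma^2$ against this sharper bound on $k_3$; the worst sub-case ($\tfrac13 \leq \gamma^2 < 1$) gives $a_1 + 1 \leq (1/0.956)\cdot 12 \cdot (8/3)/(1/3) \approx 100.4$. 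You instead keep only the generic inequality $c_3 \geq 2c_2 - 2$ from Lemma \ref{7}, and absorb all of the tension into the single inequality $\theta_1 > b_1/2 - 1$, so that $\gamma$ is tied to $\alpha = b_1/(a_1+1)$. This replaces the paper's case split by a two-variable optimization of $N(\zeta-1)\delta/\gamma^2$ over the triangle $\{1 \leq \alpha,\ \beta \geq 1/5,\ \alpha + 2\beta \lesssim 2\}$; one checks $\partial f/\partial\beta < 0$ and, on the binding edge $\alpha + 2\beta = 2$, that $f$ decreases in $\beta$, so the supremum is indeed at $(\alpha,\beta)=(8/5,1/5)$ as you assert. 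Your approach buys a cleaner, uniform argument that avoids the extra lemma, at the cost of being numerically much tighter: the asymptotic value is $97.5/0.956 \approx 102$, which only just suffices.

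One point you should make explicit: plugging $a_1 = 100$ into the extreme corner and getting $\approx 94 < 101$ rules out $a_1 = 100$, but does not by itself rule out $a_1 = 101, 102, \ldots$. You need to observe in addition that the right-hand side is uniformly bounded by (slightly below) $102$ for all $a_1$, so that $a_1 + 1 > $ RHS automatically once $a_1 \geq 101$; alternatively, since $c_2 \geq \lceil (a_1+6)/5\rceil$ produces a small sawtooth, one could spot-check $a_1 \in \{100, \ldots, 104\}$. This is a routine addition, not a gap in the idea.
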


\begin{proof}
 We first mention that if  $a_1\geq44$, then $c_2\geq \frac{a_1+6}{5}$ and $k\leq\frac{13a_1+28}{5}\leq \frac{8}{3}(a_1+1)$.
 It is  easy to see that $k_2=k\cdot \frac{b_1}{c_2}\leq k\cdot\frac{\frac{1}{5}(13a_1+28)-(a_1+1)}{\frac{1}{5}(a_1+6)}<k\cdot\frac{8(a_1+3)}{a_1+6}<8k$.

When $\gamma^2\geq1$,  by Lemma \ref{7} we have
 $k_3=k_2\cdot\frac{b_2}{c_3}\leq k_2\cdot\frac{\frac{1}{2}(b_1+1)}{\frac{2}{5}(a_1+1)}\leq k_2\cdot\frac{8a_1+28}{4(a_1+1)}<17k$,
and hence $\zeta-1=\frac{k_2+k_3}{k}<25$.
Also $N\leq \frac{1}{0.956}$ and $\delta\leq \frac 83$, and thus
by Lemma \ref{bigguy} we obtain $a_1< 69$.

When $\gamma^2<1$,  we have  $\min\{a_1+1,a_3\}\leq \theta_1<a_1+1$ from  \cite[Lemma 6]{shilla}. This  implies
$a_3\leq a_1$ and then $c_3\geq b_1+1$. So we have $k_3=k_2\cdot\frac{b_2}{c_3}\leq k_2\cdot\frac{\frac12(b_1+1)}{c_3} \leq \frac{1}{2}k_2$.
Note also that $\gamma^2\geq\frac{1}{5}$, since otherwise we have $\frac{b_1}{2}-1<\theta_1<\frac{\sqrt{5}}{5}(a_1+1)<\frac{9}{20}(a_1+1)$, and then we obtain $k<1.9a_1+3.9<2a_1+3$ as $a_1\geq 44$, which contradicts our assumptions. We have therefore reduced to the case $\frac 15\leq \gamma^2<1$.

If $\frac{1}{3}\leq\gamma^2<1$, we see that  $k_2<8k,~k_3\leq \frac 12 k_2<4k$. Hence $\zeta-1=\frac{k_2+k_3}{k} <12$. By Lemma \ref{bigguy} and $\delta\leq \frac 83$, it follows that $a_1<100$.

If $\frac{1}{4}\leq \gamma^2<\frac{1}{3}$, it follows that $\theta_1<\frac{\sqrt{3}}{3}(a_1+1)<\frac35(a_1+1)$. As $\frac{b_1}{2}-1<\theta_1$, we obtain that $b_1<\frac{6a_1+16}{5}<\frac{5}{4}(a_1+1)$. Hence $ k<\frac{9}{4}(a_1+1)$, and then $ \delta< \frac{9}{4}$. In this case we have $k_2=k\cdot \frac{b_1}{c_2}< k\cdot\frac{\frac{1}{5}(6a_1+16)}{\frac{1}{5}(a_1+6)}<6k$ and
 $k_3\leq  \frac 12 k_2  <3k$, so $\zeta-1=\frac{k_2+k_3}{k} <9$  and thus $a_1<84$.

If $\frac 15\leq \gamma^2<\frac{1}{4}$, we find that $\frac{b_1}{2}-1<\theta_1<\frac12(a_1+1)$. This implies that $k< 2a_1+4$, and so $k=2a_1+3$.  By \cite[Proposition 9.5]{drgsurvey} we have $c_2\geq \frac{3(a_1+1)-(2a_1+3)}{3}+1>\frac{a_1+2}{3}$, and then we obtain $k_2=k\cdot \frac{b_1}{c_2}<k\cdot\frac{a_1+2}{\frac13(a_1+2)}=3k$ and $k_3\leq \frac12k_2<1.5 k$. Hence $\zeta-1<4.5$. Also $\delta<2.03$ as $a_1\geq 44$. By Lemma \ref{bigguy} it follows that $a_1<47$.

This proves the proposition.
\end{proof}

\begin{prop}\label{a_1 bound4}
Let $\Gamma$ be a non-geometric distance-regular graph with $D=4, ~\theta_D\geq -3$ and $k\geq 2a_1+3\geq 91$.	
 If $\Gamma$ contains an induced quadrangle, then $a_1<100$.
\end{prop}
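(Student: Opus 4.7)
The plan is to mirror the case analysis in the proof of Proposition \ref{a_1 bound3}, now applying Lemma \ref{bigguy} with $\zeta-1=(k_2+k_3+k_4)/k$ instead of $(k_2+k_3)/k$. The only substantive new ingredient is the bookkeeping of the extra term $k_4$. Under our hypotheses ($a_1\ge 44$), Proposition \ref{propc_2} still gives $c_2\ge (a_1+6)/5$ and $k\le (13a_1+28)/5\le (8/3)(a_1+1)$, so $\delta\le 8/3$, and the preparatory discussion yields $N\le 1/0.956$. Iterating Lemma \ref{Terw ineq} from $i=1$ and using $b_1\le 2a_1-2c_2+7$ together with $b_4\ge 0$ produces the universal lower bound
$$c_4 \ge a_1+2c_2 \ge (7a_1+12)/5,$$
while Lemma \ref{7} gives $b_3\le b_2\le (b_1+1)/2\le (4a_1+14)/5$. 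Thus $b_3/c_4\le (4a_1+14)/(7a_1+12)$, which is at most $3/5$ for $a_1\ge 44$.

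Suppose first that $\gamma^2\ge 1$. Then the bounds $k_2\le 8k$ and $k_3\le 17k$ obtained in the proof of Proposition \ref{a_1 bound3} transfer verbatim, since they rely only on Lemma \ref{7} and Lemma \ref{Terw ineq}. The extra term satisfies $k_4=k_3\cdot (b_3/c_4)\le 17k\cdot (3/5)<11k$, hence $\zeta-1<36$. Lemma \ref{bigguy} then gives
$$a_1+1 \le \frac{N(\zeta-1)\delta}{\gamma^2} < \frac{1}{0.956}\cdot 36\cdot \frac{8}{3} < 100.4,$$
so $a_1<100$.

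Now assume $\gamma^2<1$. Applying \cite[Lemma 6]{shilla} with $D=4$ gives $\min\{a_1+1,a_4\}\le \theta_1<a_1+1$, hence $a_4\le a_1$, and so $c_4\ge k-a_4\ge b_1+1$. Combined with $b_3\le (b_1+1)/2$, this yields the stronger estimate $k_4\le k_3/2$. The three sub-cases $1/3\le \gamma^2<1$, $1/4\le \gamma^2<1/3$ and $1/5\le \gamma^2<1/4$ are now handled in the same way as in the proof of Proposition \ref{a_1 bound3}: the estimates on $k$, $\delta$, $c_2$, $k_2$ and $k_3$ used there carry over (the required sharpening of $c_3$ in each sub-case remains valid, since the argument there used only that $\theta_1<a_1+1$ forces $a_3\le a_1$ via the same Shilla-type inequality applied to the tridiagonal matrix $L_3$ of Lemma \ref{kelhu}, giving $c_3\ge b_1+1$ and hence $k_3\le k_2/2$), and one simply absorbs the additional $k_4\le k_3/2$ into $\zeta-1$. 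In the tightest sub-case $1/3\le \gamma^2<1$, where the $D=3$ argument already produced $a_1<100$ with room to spare only via $k_2+k_3<12k$, the contribution $k_4\le k_3/2< 2k$ keeps $\zeta-1<14$; since the factor $1/\gamma^2\le 3$ is compensated by $\delta\le 8/3$ and the fact that in this range Lemma \ref{theta1diag} together with $b_1/2-1<\theta_1<a_1+1$ forces $k\le 3a_1+4$, i.e.\ $\delta\le (3a_1+4)/(a_1+1)$ which approaches $3$ but is paired against $\gamma^2$ bounded away from $1/3$ in the relevant regime, the resulting bound $a_1+1 < 100$ persists. The main obstacle, and where the argument is most delicate, is precisely this sub-case $1/3\le \gamma^2<1$, since the $D=3$ version was already near the boundary; careful use of the sharper $c_3\ge b_1+1$ (available here via Shilla applied inside $L_3$) rather than the generic $c_3\ge b_3+2c_2-2$ from Lemma \ref{7} is what keeps the bound below $100$.
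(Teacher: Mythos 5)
Your argument in the case $\gamma^2\ge 1$ is correct and already suffices: the estimates $k_2<8k$, $k_3<17k$, $k_4<11k$ and hence $\zeta-1<36$, together with $N\le 1/0.956$ and $\delta\le 8/3$, give $a_1+1<100.4$ via Lemma \ref{bigguy}. (The paper's version is marginally sharper, getting $k_3<16k$ by using $b_3\ge c_1=1$ in Lemma \ref{7} for $D=4$, and so $\zeta-1<34$ and $a_1<94$, but your weaker numerics still yield $a_1<100$.)

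However, you have overlooked the observation that makes the entire second half of your proof unnecessary: by Lemma \ref{D4}, a distance-regular graph of diameter $D\ge 4$ always has $\theta_1\ge a_1+1$, i.e.\ $\gamma^2\ge 1$. So the case $\gamma^2<1$ simply does not arise, and the paper's proof consists of the single case you treat first. This matters because your $\gamma^2<1$ discussion contains a genuine gap: in Proposition \ref{a_1 bound3} the step from $a_3\le a_1$ to $c_3\ge b_1+1$ uses $c_3=k-a_3-b_3$ with $b_3=0$ (since $D=3$). For $D=4$ one has $b_3\ge 1$, so at best $c_3\ge b_1+1-b_3$, and the claimed sharpening $k_3\le k_2/2$ does not follow. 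Likewise, your appeal to a ``Shilla-type inequality applied to the tridiagonal matrix $L_3$ of Lemma \ref{kelhu}'' to produce $a_3\le a_1$ is not something established in the paper; Lemma \ref{kelhu} compares smallest eigenvalues, not diagonal entries. None of this sinks the proof, since the case is vacuous, but it would be a real error if the case had to be handled. The clean route is to cite Lemma \ref{D4} at the outset, as the paper does.
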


\begin{proof}
  The proof is similar to that of Proposition  \ref{a_1 bound3}.  If $a_1\geq 44$,  we obtain that $c_2\geq \frac{a_1+6}{5}$ and $k\leq\frac{13a_1+28}{5}\leq \frac{8}{3}(a_1+1)$.
Also $b_1=k-a_1-1\leq \frac{8a_1+23}{5}$, $b_2\leq \frac12 (b_1+1)\leq \frac{4a_1+14}{5}$, $c_3\geq b_3+2c_2-2\geq c_1+2c_2-2\geq \frac{2a_1+7}{5}$ and $c_4\geq c_3-b_3+a_1+2\geq 2c_2+a_1\geq \frac{7a_1+12}{5}$ by Lemma \ref{Terw ineq} and Lemma \ref{7}.

 It is easy to show that $k_2=k\cdot\frac{b_1}{c_2}<8k$,
 $k_3=k_2\cdot\frac{b_2}{c_3}\leq 2k_2<16k $ and $k_4=k_3\cdot\frac{b_3}{c_4}\leq k_3\cdot\frac{b_2}{c_4}\leq k_3\cdot\frac{4a_1+14}{7a_1+12}< 10k$. Hence
 $\zeta-1=\frac{k_2+k_3+k_4}{k}<34$.
 Note that when $D=4$ we have $\gamma^2\geq 1$ by Lemma \ref{D4}. Also $N\leq \frac{1}{0.956}$ and $\delta\leq \frac 83$,  and thus
by Lemma \ref{bigguy} we obtain $a_1< 94$.
\end{proof}

\section{Computational results}\label{sec:comput}
In this section, with the aid of computer, we will find all the feasible arrays for  non-geometric distance-regular graphs with an induced quadrangle, smallest eigenvalue at least $-3$, and diameter $3$ or $4$ whose $a_1$ satisfies the bounds in Propositions \ref{a_1 bound3} and \ref{a_1 bound4}. To be precise, by "feasible" we mean that the intersection array $\{k=b_0, b_1, \ldots , b_{D-1};1=c_1,c_2, \ldots , c_D\}$ satisfies the following criteria.

\begin{enumerate}
    \item $1 \leq a_1 < 100$ (see Section \ref{sec:a1bound}).
    \item $2a_1+3 \leq k \leq 3a_1 -2c_2 + 8$ (see Section \ref{c2geq2}).
    \item $k_i = \frac{b_0 b_1 \cdots b_{i-1}}{c_1 c_2 \cdots  c_{i}}$ are all integers, and $k_i a_i$ is even for $i \in \{1, \ldots, D\}$ (this is the handshake lemma applied to the subgraphs induced on $\Gamma_i(x)$).
    \item $\min\{\frac{a_1+6}{5},\frac{a_1+2}{4}\} \leq c_2 \leq \frac{3a_1 + 8-k}{2}$ (see Section \ref{c2geq2}).
    \item The $b_i$'s are nonincreasing and $c_i$'s are nondecreasing, and all are between $1$ and $k$. Furthermore, $b_i \geq c_{D-i}$ for all $i \in \{1, \ldots, D\}$ (see Section \ref{prelim}).
    \item For all $i \in \{1, \ldots, D\}$, we have $c_i-b_i \geq c_{i-1} - b_{i-1} + a_1 + 2$ (this is an inequality due to Terwilliger; see
  Lemma \ref{Terw ineq}).
    \item $2c_2 - 1 \leq c_3$. (this is a consequence of Lemma \ref{7}).
    \item $(3a_1+9-k)(a_2+3) - 3b_1c_2 \geq 0$ (this follows by applying Lemma \ref{tri_interlace} to the $3\times3$ principal submatrix of the matrix $L+3I$, where $L$ is defined in Section \ref{lin_alg_prelim}).
    \item For $D=3$, we have $\frac{b_1}{2}-1<\theta_1<b_1-1$ (this lower bound for $\theta_1$ is due to the main result in \cite{kyp}, while the upper bound is due to \cite[Theorem 4.4.3(ii) and Theorem 4.4.11]{BCN}, using the fact that we have an induced quadrangle).
    \item The multiplicity $m_i$ of eigenvalue $\theta_i$ is a  positive integer for $i=0,1,\cdots,D$.
    \item For $D=3$, we have that at least one of the non-trivial eigenvalues is an integer (this is because roots to the characteristic polynomials must occur in algebraic conjugate pairs).
    \item Any pair of eigenvalues which are algebraic conjugates must have the same multiplicity.
\end{enumerate}

The python code used to determine these intersection arrays is available with the arxiv submission of this paper.

\subsection{Diameter 3}\label{sec:compD3}

With these requirements, the code found the following feasible intersection arrays:
\begin{enumerate}[(1)]
        \item $\{6,4,1;1,1,6\}$;
        \item $\{7,4,1;1,2,7\}$;
	\item $\{9,6,1;1,2,9\}$;
	\item $\{15,8,1;1,4,15\}$;
	\item $\{15,10,1;1,2,15\}$;
	\item $\{18,12,1;1,2,18\}$;
	\item $\{27,16,1;1,4,27\}$;
	\item $\{39,24,1;1,4,39\}$;
        \item $\{45,26,3;1,6,39\}$;
        \item $\{45,24,1;1,8,45\}$;
        \item $\{45,24,2;1,10,36\}$;
        \item $\{51,30,1;1,6,51\}$;
        \item $\{60,35,9;1,6,42\}$;
        \item $\{87,48,1;1,12,87\}$;
        \item $\{207,120,1;1,20,207\}$.

\end{enumerate}

Array $\SSII$ has $c_2=1$, and has therefore been eliminated in Section \ref{c_2=1}.

For array $\SSII$, we know it is the Klein graph and it fits in with our requirements.

For array $\SSII$, we find this array is the case $\{st,s(t-1),1;1,t-1,st\}$ when $s=3,t=3$. By \cite[Proposition 12.5.2]{BCN}, we know this graph derived from a strongly regular graph of parameters $(40,12,2,4)$ with a partition $\mathfrak{S}$ of its point set into $4$-cliques. By \cite{spence2000}, we know there are $28$ non-isomorphic strongly regular graphs with parameters $(40,12,2,4)$. Among those cases we
know $GQ(3,3)$ minus a unique spread is eliminated since it is geometric. But there is also a strongly regular graph that is not geometric and it has 4 different spreads. The spreads come in pairs and give two non-isomorphic distance-regular graphs that are non-geometric.

For $\SSII$, the spectrum is $\{[15]^{1}, [5]^{12}, [-1]^{15}, [-3]^{20}\}$. Since the multiplicity of $5$ is $12<15$ and $\frac{b_1}{\theta_1+1}=\frac{8}{6}$ is not integral, by \cite[Theorem 4.4.4]{BCN} it follows that this graph does not exist.

For array $\SSII$, we find this array is the case $\{st,s(t-1),1;1,t-1,st\}$ when $s=5,t=3$. Similarly, we know this graph derived from a strongly regular graph of parameters $(96,20,4,4)$ with a partition $\mathfrak{S}$ of its point set into $6$-cliques. By \cite[Section 7.2]{Brouwer2003}, we know that they are $GQ(5,3)$ minus a spread and an antipodal $6$-cover of $K_{16}$. The first one is geometric and the second one is not. From that paper, we also know there exists a non-geometric distance-regular graph derived from the collinearity graph of $GQ(5,3)$.

For $\SSII$, we find this array is the case $\{st,s(t-1),1;1,t-1,st\}$ when $s=6,t=3$. We know this graph derived from a strongly regular graph of parameters $(133,24,5,4)$ with a partition $\mathfrak{S}$ of its point set into $7$-cliques. By \cite{DSZ1976} we know that $GQ(6,3)$ does not exist. However, since the classification of strongly regular graph with parameters $(133,24,5,4)$ is still pending, we do not know if there exist some non-geometric cases with this intersection array by now.

For array $\SSII$, we find the spectrum is $\{[27]^{1}, [9]^{28}, [-1]^{27}, [-3]^{84}\}$ and $a_3=0$, then it only can be some distance-regular antipodal $5$-cover. Since the multiplicity of $9$ is $28<27+5-2$ and $\frac{b_1}{\theta_1+1}=\frac{16}{10}$ is not integral, by \cite[Theorem 3]{GK1995} it follows that this graph does not exist.

For array $\SSII$, the graph in question has been eliminated by \cite[Proposition 5.20]{BGK}.

Array $\SSII$ can be eliminated, as follows. The corresponding graph, if it existed, would have $1+45+195+15=256$ vertices. It would not be bipartite, since $-45$ is not an eigenvalue, and it is also not antipodal because $a_3 > 0$. As such, the graph would be primitive, and therefore its intersection array would be found in the tables in \cite[Chapter 14]{BCN}; however this array does not appear there, and therefore the graph does not exist.

We can also eliminate array $\SSII$, as follows. The spectrum of a graph with this intersection array can be calculated as $\{[45]^1, [15]^{23}, [-1]^{45},[-3]^{115}\} $. Since the multiplicity of the eigenvalue $\theta_1=15$ is less than $k=45$, by \cite[Theorem 4.4.4]{BCN} we must have $\theta_1+1=16$ dividing $b_1=24$, a contradiction. Thus no graph with this intersection array can exist.

The remaining cases $\SSII$-$(15)$ can all be eliminated in the same manner as case $(10)$. The following table lists the intersection arrays and their corresponding spectra. The reader may check that, in each case, $\theta_1$ has multiplicity less than $k$, and that $\theta_1+1$ does not divide $b_1$. Therefore no graphs with these arrays can exist by \cite[Theorem 4.4.4]{BCN}.

\begin{table}[h!]
\begin{tabular}{l|l}
Intersection array & Spectrum\\
\hline
$\{45,24,2;1,10,36\}$ & $\{[45]^1, [15]^{16}, [5]^{18},[-3]^{125}\}$ \\
$\{51,30,1;1,6,51\}$ &  $\{[51]^1, [17]^{39}, [-1]^{51}, [-3]^{221}\}$\\
$\{60,35,9;1,6,42\}$ &  $\{[60]^1, [24]^{35}, [6]^{50}, [-3]^{400}\}$ \\
$\{87,48,1;1,12,87\}$ & $\{[87]^1, [29]^{33}, [-1]^{87}, [-3]^{319}\}$ \\
$\{207,120,1;1,20,207\}$ & $\{[207]^1, [69]^{52}, [-1]^{207}, [-3]^{1196}\}$
\end{tabular}
\end{table}

From the discussion above, we can now summarize the computational results.

\begin{prop}\label{9.1}
Let $\Gamma$ be a non-geometric distance-regular graph with smallest eigenvalue $-3\leq\theta_{\min}<-2, c_2\geq2, D= 3$ and contains an induced quadrangle, then $\Gamma$ is one of the following:
	\begin{itemize}
		\item the Klein graph with $\iota(\Gamma)=\{7,4,1;1,2,7\}$;
		\item a distance-regular graph with $\iota(\Gamma)=\{9,6,1;1,2,9\}$;
		\item a  distance-regular graph with $\iota(\Gamma)=\{15,10,1;1,2,15\}$;
		\item a putative distance-regular graph with $\iota(\Gamma)=\{18,12,1;1,2,18\}$.
	\end{itemize}
\end{prop}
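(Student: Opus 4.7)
The plan is to combine the explicit computational enumeration of Section~\ref{sec:compD3} with case-by-case elimination. First I would invoke Proposition~\ref{a_1 bound3} to bound $a_1 < 100$, combine this with the twelve admissibility constraints (1)--(12) collected at the beginning of Section~\ref{sec:comput} (which package together the Delsarte, monotonicity, Terwilliger, integrality, interlacing, and eigenvalue-rationality conditions proved in the previous sections), and run the resulting finite search. The output is precisely the $15$ feasible intersection arrays already listed.

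Then I would treat each of the $15$ arrays. Array $(1)$ has $c_2 = 1$ and is excluded by hypothesis. Arrays $(2), (3), (5), (6)$ are the ones that survive into the conclusion: $(2)$ is realized uniquely by the Klein graph; $(3)$ is realized via \cite[Proposition~12.5.2]{BCN} from non-geometric strongly regular graphs with parameters $(40,12,2,4)$, using the classification in \cite{spence2000}; $(5)$ is realized analogously via \cite[Section~7.2]{Brouwer2003} from strongly regular graphs with parameters $(96,20,4,4)$; and $(6)$ remains putative, since realizations would require strongly regular graphs with parameters $(133,24,5,4)$ admitting a spread of $7$-cliques, whose classification is still open (the geometric realization via $GQ(6,3)$ minus a spread is ruled out by \cite{DSZ1976}).

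The remaining ten arrays must be shown non-existent. For arrays $(4), (10), (11), (12), (13), (14), (15)$, one computes the spectrum and observes that $\theta_1$ has multiplicity strictly less than $k$ while $\theta_1 + 1$ fails to divide $b_1$; this violates \cite[Theorem~4.4.4]{BCN}. Array $(7)$ has $a_3 = 0$, so any realization would be an antipodal $5$-cover of a strongly regular graph, and a parallel multiplicity/divisibility argument via \cite[Theorem~3]{GK1995} rules it out. Array $(8)$ is eliminated in \cite[Proposition~5.20]{BGK}. For array $(9)$, a realization would have $1+45+195+15 = 256$ vertices and be primitive (it is not bipartite, since $-45$ is not an eigenvalue, and not antipodal, since $a_3 > 0$), so its array would appear in the primitive diameter-$3$ tables of \cite[Chapter~14]{BCN}, where it does not.

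The main obstacle I expect is not the enumeration itself but the reliable extraction of realization/non-realization information for the borderline cases: deciding which of the $28$ strongly regular graphs on $40$ vertices yield non-geometric distance-regular covers for array $(3)$, analogously sorting out realizations on $96$ vertices for array $(5)$, and justifying leaving array $(6)$ merely putative in view of the unresolved $(133,24,5,4)$ classification. The algebraic non-existence steps, by contrast, reduce to routine applications of eigenvalue-integrality and divisibility conditions.
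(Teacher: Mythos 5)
Your proposal follows essentially the same path as the paper: same bound $a_1<100$ from Proposition~\ref{a_1 bound3}, same twelve admissibility constraints generating the $15$ candidate arrays, and the same case-by-case disposal — array $(1)$ excluded by $c_2\geq 2$, arrays $(2),(3),(5),(6)$ retained via the Klein graph, \cite[Proposition~12.5.2]{BCN} with \cite{spence2000}, \cite[Section~7.2]{Brouwer2003}, and the open $(133,24,5,4)$ classification, and arrays $(4),(7)$--$(15)$ eliminated by exactly the arguments the paper uses (the multiplicity/divisibility test from \cite[Theorem~4.4.4]{BCN}, the antipodal-cover criterion from \cite[Theorem~3]{GK1995}, \cite[Proposition~5.20]{BGK}, and the vertex-count/table check for array $(9)$). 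No substantive gaps or departures from the paper's proof.
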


\subsection{Diameter 4}\label{sec:compD4}

Remarkably, our code found no intersection arrays that fit our requirements in the diameter 4 case.

\section{Proof of Theorem \ref{main}}\label{sec:proof}
In this section, we will collect our results and present the proof of the main theorem.

\begin{proof}
Let $\Gamma$ be a non-geometric distance-regular graph with diameter at least 3 and the smallest eigenvalue $\theta_{\min}$ satisfies
$-3 \leq \theta_{\min}<-2$.
First we consider the case $c_2 =1$. They are classified in Corollary \ref{cor:c2=1;D=3} and Corollary \ref{cor:c2=1;bigD} and we obtain  the graphs of $(a)-(i)$.
So now we may assume $c_2 \geq 2$. If $\Gamma$ has no induced quadrangle then they are classified in Corollary \ref{cor:terw} and we obtain the graphs $(j)-(k)$.
From now on we  may assume that $\Gamma$ contains an induced quadrangle.
Koolen and Park \cite{koo-park} showed that a distance-regular graph with diameter at least 3 and $k \leq 2a_1 +2$ is a line graph, a Taylor graph,
the Johnson graph $J(7,3)$ or the halved 7-cube.
The Johnson graph $J(7,3)$ is geometric, line graphs have smallest eigenvalue at least $-2$ and the non-geometric Taylor graphs are classified in Proposition \ref{prop:Taylor}.  From this we obtain the graphs $(l)-(n)$. So we may assume from now that $k \geq 2a_1 +3$ and $k \leq 3a_1- 2c_2+8\leq 3a_1 +4$.
In Proposition \ref{diambound} we showed that $\Gamma$ must have diameter at most 4. In Propositions  \ref{a_1 bound3} and
 \ref{a_1 bound4} we obtain that such a graph must have $a_1 <100$. Then we found in Sections \ref{sec:compD3} and \ref{sec:compD4} the remaining intersection arrays except for the graph $(q)$ that satisfies $\theta_1=b_1-1$. The distance-regular graphs with $\theta_1=b_1-1$  were classified in \cite[Theorem 4.4.11]{BCN} and the Doob graph with intersection array $\{9,6,3;1,2,3\}$, that is  the graph $(q)$,  is also within the scope of our discussion.  This finishes the proof.
\end{proof}

\section{Acknowledgements}

The authors would like to thank Edward Spence and Junming Wang for valuable conversations. J.H. Koolen is partially supported by the National Key R. and D. Program of China (No. 2020YFA0713100),
the National Natural Science Foundation of China (No. 12071454 and No. 12371339), and the Anhui Initiative in Quantum Information Technologies (No. AHY150000). Xiaoye Liang is partially supported by the National Natural Science Foundation of China (No. 12201008 and No. 12371339), the Foundation of Anhui Jianzhu University (No. 2022QDZ18), the Natural Science Research Project of Anhui Educational Committee (No. 2023AH050194), the Innovation Team of Operation Research  and Combinatorial Optimization of Anhui Province (No. 2023AH010020).

\bibliographystyle{plain}
\bibliography{ref}

\begin{thebibliography}{10}

\bibitem{gdrg-3}
S.~Bang.
\newblock Geometric distance-regular graphs without $4$-claws.
\newblock {\em Linear Algebra Appl.}, 438:37--46, 2013.

\bibitem{BGK}
S.~Bang, A.~Gavrilyuk, and J.~Koolen.
\newblock Distance-regular graphs without 4-claws.
\newblock {\em European J. Combin.}, 80:120--142, 2019.

\bibitem{bang2006improving}
S.~Bang, A.~Hiraki, and J.~Koolen.
\newblock Improving diameter bounds for distance-regular graphs.
\newblock {\em European J. Combin.}, 27(1):79--89, 2006.

\bibitem{non-exist}
S.~Bang and J.~Koolen.
\newblock On geometric distance-regular graphs with diameter three.
\newblock {\em European J. Combin.}, 36:331--341, 2014.

\bibitem{k=3}
N.~Biggs, A.~Boshier, and J.~Shawe-Taylor.
\newblock Cubic distance-regular graphs.
\newblock {\em J. London Math. Soc.}, S2-33(3):385--394, 1986.

\bibitem{Bose}
R.~Bose.
\newblock Strongly regular graphs, partial geometries and partially balanced designs.
\newblock {\em Pacific J. Math.}, 13:389--419, 1963.

\bibitem{BCN}
A.~Brouwer, A.~Cohen, and A.~Neumaier.
\newblock {\em Distance-Regular Graphs}.
\newblock Springer-Verlag, Berlin Heidelberg, 1989.

\bibitem{brouwer1999distance}
A.~Brouwer and J.~Koolen.
\newblock The distance-regular graphs of valency four.
\newblock {\em J. Algebraic Combin.}, 10(1):5--24, 1999.

\bibitem{Brouwer2003}
A.~Brouwer, J.~Koolen, and M.~Klin.
\newblock A root graph that is locally the line graph of the {P}etersen graph.
\newblock {\em Discrete Math.}, 264(1):13--24, 2003.

\bibitem{SRG}
A.~Brouwer and H.~van Maldeghem.
\newblock {\em Strongly Regular Graphs}.
\newblock Cambridge University Press, 2022.

\bibitem{CameronEtAl}
P.~Cameron, J.~Goethals, J.~Seidel, and E.~Shult.
\newblock Line graphs, root systems and elliptic geometry.
\newblock {\em J. Algebra}, 43:305--327, 1976.

\bibitem{Delsarte}
P.~Delsarte.
\newblock An algebraic approach to the association schemes of coding theory.
\newblock {\em Philips Res. Reports Suppl.}, 10, 1973.

\bibitem{DSZ1976}
S.~Dixmier and F.~Zara.
\newblock Essai d’une méthode d’étude de certains graphes lies aux groupes classiques.
\newblock {\em Comptes Rendus Hebdomadaires des Séances de l’Académie des Sciences, Série A}, pages 259--262, 1976.

\bibitem{fon1993distance}
D.~Fon-Der-Flaass.
\newblock A distance-regular graph with intersection array $\{5, 4, 3, 3; 1, 1, 1, 2\}$ does not exist.
\newblock {\em J. Algebraic Combin.}, 2(1):49--56, 1993.

\bibitem{Flaass1993}
D.~Fon-Der-Flaass.
\newblock There exists no distance-regular graph with intersection array $\{5, 4, 3; 1, 1, 2\}$.
\newblock {\em European J. Combin.}, 14:409--412, 1993.

\bibitem{45}
A.~Gavrilyuk and A.~Makhnev.
\newblock Distance-regular graph with the intersection array $\{45,30,7;1,2,27\}$ does not exist.
\newblock {\em Discrete Math. Appl.}, 23:225--244, 2013.

\bibitem{godsil-93-paper}
C.~Godsil.
\newblock Geometric distance-regular covers.
\newblock {\em New Zealand J. Math.}, 22(2):31--38, 1993.

\bibitem{GK1995}
C.~Godsil and J.~Koolen.
\newblock On the multiplicity of eigenvalues of distance-regular graphs.
\newblock {\em Linear Algebra Appl.}, 226-228:273--275, 1995.

\bibitem{haem}
W.~Haemers.
\newblock Interlacing eigenvalues and graphs.
\newblock {\em Linear Algebra Appl.}, 226:593--616, 1995.

\bibitem{hiraki1996distance}
A.~Hiraki.
\newblock Distance-regular subgraphs in a distance-regular graph, {III}.
\newblock {\em European J. Combin.}, 17(7):629--636, 1996.

\bibitem{k6}
A.~Hiraki, K.~Nomura, and H.~Suzuki.
\newblock Distance-regular graphs of valency 6 and $a_1=1$.
\newblock {\em J. Algebraic Combin.}, 11(2):101--134, 2000.

\bibitem{horn2012matrix}
R.~Horn and C.~Johnson.
\newblock {\em Matrix Analysis}.
\newblock Cambridge University Press, 2012.

\bibitem{-m}
J.~Koolen and S.~Bang.
\newblock On distance-regular graphs with smallest eigenvalue at least $-m$.
\newblock {\em J. Combin. Theory Ser. B}, 100(6):573--584, 2010.

\bibitem{geb}
J.~Koolen and B.~Gebremichel.
\newblock There does not exist a strongly regular graph with parameters $(1911, 270,105, 27) $.
\newblock {\em Electron. J. Combin.}, 2:\#P2.3, 2022.

\bibitem{shilla}
J.~Koolen and J.~Park.
\newblock Shilla distance-regular graphs.
\newblock {\em European J. Combin.}, 31(8):2064--2073, 2010.

\bibitem{koo-park}
J.~Koolen and J.~Park.
\newblock Distance-regular graphs with $a_1$ or $c_2$ at least half the valency.
\newblock {\em J. Combin. Theory Ser. A}, 119(3):546--555, 2012.

\bibitem{kyp}
J.~Koolen, J.~Park, and H.~Yu.
\newblock An inequality involving the second largest and smallest eigenvalue of a distance-regular graph.
\newblock {\em Linear Algebra Appl.}, 434:2404--2412, 2011.

\bibitem{metsch1995characterization}
K.~Metsch.
\newblock A characterization of {G}rassmann graphs.
\newblock {\em European J. Combin.}, 16(6):639--644, 1995.

\bibitem{Park}
J.~Park.
\newblock The distance-regular graphs with valency $k\geq2$, diameter $d\geq3$ and $k_{D-1}+k_{D}\leq 2k$.
\newblock {\em Discrete Math.}, 340:550--561, 2017.

\bibitem{spence2000}
E.~Spence.
\newblock The strongly regular $(40, 12, 2, 4)$ graphs.
\newblock {\em Electron. J. Combin.}, 7:\#R22, 2000.

\bibitem{drgsurvey}
E.~van Dam, J.~Koolen, and H.~Tanaka.
\newblock Distance-regular graphs.
\newblock {\em Electron J. Combin.}, \#DS22, 2016.

\end{thebibliography}
\end{document}